\theoremstyle{plain}
\newtheorem{theorem}{Theorem}
\newtheorem{lemma}[theorem]{Lemma}
\newtheorem{proposition}[theorem]{Proposition}
\newtheorem{corollary}[theorem]{Corollary}
\theoremstyle{definition}
\newtheorem{remark}[theorem]{Remark}
\newtheorem{example}[theorem]{Example}
\newtheorem{definition}[theorem]{Definition}
 \numberwithin{equation}{section}
 \numberwithin{theorem}{section}
\DeclareMathOperator{\tr}{tr}
\DeclareMathOperator{\Dim}{Dim}
\DeclareMathOperator{\MCG}{MCG}
\DeclareMathOperator{\evL}{\overset{\xleftarrow{\phantom{ev}}}{ev}\!}
\DeclareMathOperator{\coevL}{\overset{\xleftarrow{\phantom{coev}}}{coev}}
\DeclareMathOperator{\evR}{\overset{\xrightarrow{\phantom{ev}}}{ev}\!}
\DeclareMathOperator{\coevR}{\overset{\xrightarrow{\phantom{coev}}}{coev}}
\newcommand{\be}{\begin{equation}}
\newcommand{\ee}{\end{equation}}
\newcommand{\Graph}{\mathrm{Graph}}
\newcommand{\VGraph}{\mathcal{V}\mathrm{Graph}}
\newcommand{\R}{\mathcal{R}}
\newcommand{\VR}{\mathcal{VR}}
\newcommand{\SN}{\mathrm{SN}}
\newcommand{\BSig}{\mathbf{\Sigma}}
\newcommand{\BQ}{\mathbf{Q}}
\newcommand{\forget}{\mathbf{U}}
\newcommand\eps           {\varepsilon}
\newcommand\id            {id}
\newcommand\one           {{\bf1}}
\newcommand\Cb            {\mathbb{C}}
\newcommand\Rb            {\mathbb{R}}
\newcommand\Zb            {\mathbb{Z}}
\newcommand\Cc            {\mathcal{C}}
\newcommand\Ec            {\mathcal{E}}
\newcommand\Ic            {\mathcal{I}}
\newcommand\Jc            {\mathcal{J}}
\newcommand\Mc            {\mathcal{M}}
\newcommand\Zc            {\mathcal{Z}}
\begin{document}

\thispagestyle{empty}
\def\thefootnote{\fnsymbol{footnote}}
\begin{flushright}
ZMP-HH/19-15\\
Hamburger Beitr\"age zur Mathematik 798\\
KCL-MTH-19-02
\end{flushright}
\vskip 3em
\begin{center}\LARGE
	String-net models for\\
non-spherical pivotal fusion categories
\end{center}

\vskip 1em

\begin{center}
{\large 
Ingo Runkel~\footnote{Email: {\tt ingo.runkel@uni-hamburg.de}}}
\\[1em]
Fachbereich Mathematik, Universit\"at Hamburg\\
Bundesstra\ss e 55, 20146 Hamburg, Germany
\\[.5em]
and
\\[.5em]
Department of Mathematics, King's College London,\\
The Strand, London WC2R 2LS, United Kingdom
\end{center}

\vskip 1em

\begin{center}
  July 2019
\end{center}

\vskip 1em

\begin{abstract}
A string-net model associates a vector space to a surface in terms of graphs decorated by objects and morphisms of a pivotal fusion category modulo local relations. String-net models are usually considered for spherical fusion categories, and in this case the vector spaces agree with the state spaces of the corresponding Turaev-Viro topological quantum field theory. 

In the present work some effects of dropping the sphericality condition are investigated. In one example of non-spherical pivotal fusion categories, the string-net space counts the number of $r$-spin structures on a surface and carries an isomorphic representation of the mapping class group. Another example concerns the string-net space of a sphere with one marked point labelled by a simple object $Z$ of the Drinfeld centre. This space is found to be non-zero iff $Z$ is isomorphic to a non-unit simple object determined by the non-spherical pivotal structure. 

The last example mirrors the effect of deforming the stress tensor of a two-dimensional conformal field theory, such as in the topological twist of a supersymmetric theory.
\end{abstract}

\setcounter{footnote}{0}
\def\thefootnote{\arabic{footnote}}

\newpage

\tableofcontents

\section{Introduction}

String-net models were introduced in \cite{Levin:2004mi} to describe states in so-called doubled topological phases. In \cite{Kadar:2009fs,Koenig:2010,Kirillov:2011mk}, string-net spaces were shown to be equal to state spaces of Turaev-Viro topological quantum field theories. These works start their constructions from a pivotal fusion category $\Cc$ which is in addition spherical. The present paper explores some of the effects of dropping the sphericality condition.

\medskip

In brief, given a pivotal fusion category $\Cc$ over the complex numbers $\Cb$, the string-net construction assigns a $\Cb$-vector space 
\be
\SN(\Sigma,\Cc) 
\ee
to an oriented surface $\Sigma$, possibly with boundary. The boundary can contain marked points labelled by objects of $\Cc$. The vector space $\SN(\Sigma,\Cc)$ is defined by first taking the $\Cb$-linear span of all graphs drawn on $\Sigma$ with edges labelled by objects of $\Cc$ -- possibly ending at a marked boundary point -- and vertices labelled by morphisms in $\Cc$. 
Then one takes the quotient by all relations that hold in embedded discs when evaluating the graph inside the disc as a string diagram in $\Cc$. In particular, if $\Sigma$ itself is a disc with marked points $U_1,\dots,U_n$ on the boundary, then $\SN(\Sigma,\Cc)$ is isomorphic to the Hom-space $\Cc(\one, U_1 \otimes \cdots \otimes U_n)$. The details of the string-net construction are given in Section~\ref{sec:stringnet}. The important point is that it does not depend on $\Cc$ being spherical or not.

As already mentioned, for spherical $\Cc$ the string-net space for a given marked surface is canonically isomorphic to the state space of the Turaev-Viro TQFT for $\Cc$ \cite{Kadar:2009fs,Koenig:2010,Kirillov:2011mk}. This space in turn is isomorphic to the state space of the Reshetikhin-Turaev theory for the Drinfeld double $\Zc(\Cc)$ of $\Cc$ \cite{Balsam:2010,Turaev:2010pp,Balsam:2010c}. Some properties of state spaces that one can immediately conclude in the spherical case are:
\begin{enumerate}
\item The state space for a sphere with no marked points is one-dimensional.
\item The dimension of the state space of a torus with no marked points is given by the number of simple objects in $\Zc(\Cc)$.
\item The state space of a genus $g$ surface with no marked points is non-zero.
\item The state space for a sphere with one marked point labelled by a simple object $Z \in \Zc(\Cc)$ is zero-dimensional unless $Z \cong \one$.
\end{enumerate}
Apart from the last point, these are directly statements about the corresponding string-net spaces $\SN(\Sigma,\Cc)$. To relate the last statement to string-nets, one needs to extend the string-net formalism to include marked points labelled by objects in $\Zc(\Cc)$. This has been done in \cite{Kirillov:2011mk}.

\medskip

One can now ask what happens to properties 1--4 if one starts from a non-spherical pivotal fusion category $\Cc$. The most direct effect is for the sphere and gives the reason for the name ``spherical'' (Proposition~\ref{prop:SN-S2-1d-or-0d}).

\begin{proposition}\label{prop:intro-S2dim}
For a pivotal fusion category $\Cc$ we have
\be
	\dim \, \SN(S^2,\Cc) 
	=
	\begin{cases}
		1 &;~ \Cc \text{ spherical} \\
		0 &;~ \Cc \text{ not spherical}
	\end{cases}
\ee
\end{proposition}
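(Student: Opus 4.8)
The plan is to compute $\SN(S^2,\Cc)$ directly from the definition. First I would use the standard fact (recalled in the introduction) that cutting $S^2$ into discs identifies $\SN(S^2,\Cc)$ with a colimit of $\Hom$-spaces; concretely, viewing $S^2$ as two discs glued along their boundary circle, a spanning set for $\SN(S^2,\Cc)$ is given by single edges labelled by simple objects $X$ of $\Cc$ running from one hemisphere to the other, i.e.\ by the images of the elements of $\Cc(\one, X \otimes X^\vee)$ (a one-dimensional space for $X$ simple). So $\SN(S^2,\Cc)$ is spanned by vectors $v_X$, one for each simple $X$ up to isomorphism, coming from the ``bubble'' with a single $X$-coloured loop.

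Next I would extract the local relations that identify these spanning vectors. The key move is to isotope an $X$-coloured circle on $S^2$ and shrink it to a point: evaluated in an embedded disc this produces a scalar times the empty graph. Because $\Cc$ is pivotal but not necessarily spherical, shrinking the loop ``to the left'' yields the left dimension $\dim_L(X)$ times the empty graph, while shrinking it ``to the right'' yields $\dim_R(X)$. Since both computations take place in embedded discs on $S^2$ and give the same element of $\SN(S^2,\Cc)$, we get $\dim_L(X)\, v_\emptyset = v_X = \dim_R(X)\, v_\emptyset$ where $v_\emptyset$ is the class of the empty graph. In particular $(\dim_L(X) - \dim_R(X))\, v_\emptyset = 0$ for every simple $X$, and every $v_X$ is a scalar multiple of $v_\emptyset$, so $\dim \SN(S^2,\Cc) \le 1$ always.

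Now the dichotomy. If $\Cc$ is spherical, $\dim_L(X) = \dim_R(X)$ for all $X$, so no relation kills $v_\emptyset$; one then checks, using that the string-net space of the disc with no marked points is $\Cc(\one,\one) \cong \Cb$ together with the gluing, that $v_\emptyset \ne 0$, giving dimension exactly $1$ (this matches the known Turaev–Viro computation and can also be seen via a non-degenerate pairing with the other hemisphere). If $\Cc$ is not spherical, then by definition there is some simple $X$ with $\dim_L(X) \ne \dim_R(X)$; the relation above forces $v_\emptyset = 0$, and since every spanning vector $v_Y$ is a multiple of $v_\emptyset$, we conclude $\SN(S^2,\Cc) = 0$.

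The step I expect to be the main obstacle is the nonvanishing claim $v_\emptyset \ne 0$ in the spherical case: one must exhibit a well-defined nonzero functional on $\SN(S^2,\Cc)$ (equivalently, show the local relations do not collapse the empty graph), which requires either invoking the isomorphism with the Turaev–Viro state space or constructing an evaluation pairing on $S^2 = D^2 \cup_{S^1} D^2$ and checking its compatibility with all local relations — sphericality is exactly what makes this pairing well defined. The upper bound $\dim \le 1$ and the vanishing in the non-spherical case are comparatively routine once the loop-shrinking relations are in hand.
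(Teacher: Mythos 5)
Your proposal follows essentially the same route as the paper's first proof of Proposition~\ref{prop:SN-S2-1d-or-0d}: every coloured graph on $S^2$ can be placed inside an embedded disc (the paper gets $\dim\le 1$ this way directly, without your intermediate spanning set of $X$-labelled loops), and dragging an $X$-labelled loop around the sphere yields $\dim_l(X)\,[S^2]=\dim_r(X)\,[S^2]$, which forces $[S^2]=0$ in the non-spherical case; for the spherical case the paper, like you, defers the nonvanishing of the empty graph to the literature (Barrett--Westbury and Kirillov). It is worth noting that the paper also gives a second, self-contained argument that resolves precisely the step you flag as the main obstacle: by Theorems~\ref{thm:punctured-to-Hom} and~\ref{thm:remove-puncture}, $\SN(S^2)\cong \mathrm{im}(B_p)$ with $B_p$ acting on the one-dimensional space $\SN(S^2-\{p\})$ by the scalar $\sum_{U\in\Ic}\dim_r(U)^2/\Dim(\Cc)$, which by Lemma~\ref{lem:dim-sum-zero} is nonzero exactly when $\Cc$ is spherical, so both cases follow at once without invoking the Turaev--Viro identification.
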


The string-net space on the torus, on the other hand, is not affected by $\Cc$ being spherical or not (Proposition~\ref{prop:SN-basis-torus}).

\begin{proposition}
For a pivotal fusion category $\Cc$, the dimension of $\SN(T^2,\Cc)$ is given by the number of simple objects in $\Zc(\Cc)$. 
\end{proposition}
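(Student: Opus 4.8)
The strategy is to cut the torus along a non-separating circle, reduce the two pieces, and identify $\SN(T^2,\Cc)$ with the cocenter of Ocneanu's tube algebra $\mathcal T(\Cc)$; none of the steps will use a spherical structure. First I would cut $T^2=S^1\times S^1$ along the meridian $M=S^1\times\{*\}$, turning it into the cylinder $C:=S^1\times[0,1]$, and invoke the gluing property of string-net spaces — which, exactly as in the spherical case, does not refer to a spherical structure. Gluing the two ends of $C$ back together exhibits $\SN(T^2,\Cc)$ as the trace of the cylinder string-net spaces, $\SN(T^2,\Cc)\cong\int^{U\in\Cc}\SN(C;U,U)$, where $\SN(C;U,V)$ denotes string-nets in $C$ with one marked point labelled $U$ on one end and one labelled $V$ on the other (orientations matched in the standard way). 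By resolving the identity morphism on $U$ into its simple summands in a collar of the gluing circle — a move that only uses semisimplicity of $\Cc$ — this coend collapses to $\bigoplus_{i\in I}\SN(C;X_i,X_i)$ modulo the relations forced by the self-gluing, where $I$ indexes a set of representatives $X_i$ of the simple objects of $\Cc$. A further reduction of string-nets inside the cylinder (fuse everything winding around the core circle into a single simple strand crossing the fixed strand once) gives, in the standard presentation, $\SN(C;X_i,X_j)\cong\bigoplus_{k\in I}\Cc(X_i\otimes X_k,\,X_k\otimes X_j)$; pivotality is used here to read the winding strand as a morphism, but again sphericality is not. Stacking cylinders equips $\bigoplus_{i,j}\SN(C;X_i,X_j)$ with the multiplication of the tube algebra $\mathcal T(\Cc)$.

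Next I would recall two facts, neither requiring sphericality. First, by Ocneanu's theorem the category of finite-dimensional $\mathcal T(\Cc)$-modules is equivalent to $\Zc(\Cc)$; since $\Zc(\Cc)$ is semisimple with finitely many simple objects, $\mathcal T(\Cc)$ is a finite-dimensional semisimple $\Cb$-algebra, $\mathcal T(\Cc)\cong\bigoplus_{Z}\mathrm{Mat}_{n_Z}(\Cb)$ with the sum over $Z\in\mathrm{Irr}(\Zc(\Cc))$. Second, for any such algebra $A$ the cocenter $A/[A,A]$ has dimension equal to the number of matrix blocks, hence $\dim\big(\mathcal T(\Cc)/[\mathcal T(\Cc),\mathcal T(\Cc)]\big)=\#\mathrm{Irr}(\Zc(\Cc))$. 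It then remains to identify the self-gluing relations from the first step with the cocenter relations: if $f\colon X_i\to X_j$ and $g\colon X_j\to X_i$ are string-nets in $C$, then closing up $g\circ f$ and closing up $f\circ g$ yield the same element of $\SN(T^2,\Cc)$, because pushing the gluing circle once around the longitude direction of the torus is realised by an ambient isotopy. Granting this, $\SN(T^2,\Cc)\cong\mathcal T(\Cc)/[\mathcal T(\Cc),\mathcal T(\Cc)]$ and therefore $\dim\SN(T^2,\Cc)=\#\mathrm{Irr}(\Zc(\Cc))$.

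The main obstacle is precisely this last identification: verifying that self-gluing produces exactly the commutator relations, with no scalar coefficient depending on the pivotal structure. This is the step at which the torus and the sphere diverge — for $S^2$ one is forced to slide a strand across a capping disc, a move contributing the ratio of the left and right dimensions and collapsing $\SN(S^2,\Cc)$ unless $\Cc$ is spherical (Proposition~\ref{prop:intro-S2dim}); on $T^2$ every move used is supported either in an embedded disc or in an ambient isotopy, so no such ratio can arise. A subsidiary point is to check that restricting the cylinder category to simple boundary labels loses nothing — it is Morita equivalent to the full version and has the same block structure — so that replacing it by $\mathcal T(\Cc)$ is harmless.
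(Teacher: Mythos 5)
Your route (cut the torus into a cylinder, recognise the cylinder string nets as a tube algebra, count blocks via the cocenter) is consistent with the known answer, but it has a genuine gap at exactly the step you flag and then dismiss heuristically: the claim that self-gluing imposes only the trace/commutator relations, with no extra collapse depending on the pivotal structure. The dichotomy you offer --- ``every move is supported in an embedded disc or is an ambient isotopy, so no ratio of dimensions can arise'' --- does not distinguish the torus from the sphere: all null-graph relations are by definition supported in embedded squares $f(Q)$, and the move that kills $\SN(S^2,\Cc)$ in the non-spherical case (dragging a small loop around the sphere) is likewise a composition of ambient isotopies and disc-local evaluations. The same example shows that the ``gluing property'' you invoke at the outset is false in the naive coend form you need: gluing two discs along a circle gives $S^2$, and the corresponding coend $\int^{U\in\Cc}\Cc(\one,U)\otimes_{\Bbbk}\Cc(U,\one)\cong\Cc(\one,\one)=\Bbbk$ is one-dimensional, whereas $\SN(S^2,\Cc)=0$ whenever $\Cc$ is not spherical (Proposition~\ref{prop:SN-S2-1d-or-0d}). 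So the natural surjection from your cut-and-reglued data onto $\SN(T^2,\Cc)$ could a priori have a kernel strictly larger than the commutator relations; ruling this out is the actual mathematical content of the proposition, not a formal consequence of torus topology, and it is precisely where the pivotal structure must be confronted.

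For comparison, the paper does confront it: $\SN(T-\{p\},\Cc)\cong\Cc(\one,H)\cong\bigoplus_{U\in\Ic}\Cc(U,A(U))$ by Theorem~\ref{thm:punctured-to-Hom} --- which is, in effect, your tube algebra --- and $\SN(T,\Cc)$ is identified with the image of the explicit puncture-removal idempotent $B_p$ by Theorem~\ref{thm:remove-puncture}. The computation of that image (Lemma~\ref{lem:Bp-action-punct-torus}, proved in Appendix~\ref{app:Bp-action-punct-torus}) is where the $\dim_r(U)$ prefactors and the half-braidings on $\hat A(-)$ are tracked and shown to recombine into the linearly independent elements $h_Z$, one for each simple $Z\in\Zc(\Cc)$, with no net pivotal anomaly. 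Your remaining ingredients (semisimplicity of $\Zc(\Cc)$, blocks labelled by its simples, cocenter dimension equal to the number of blocks) are fine and closely parallel the paper's use of the decomposition of $\Zc(\Cc)(\hat A(U),\hat A(U))$ into $\bigoplus_Z \Zc(\Cc)(\hat A(U),Z)\otimes_{\Bbbk}\Zc(\Cc)(Z,\hat A(U))$; but to make your argument a proof you would have to establish the self-gluing/cocenter identification for non-spherical string nets, and the natural way to do so is the $B_p$-image computation the paper carries out.
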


The main motivation for carrying out the investigation presented in this paper are the following two observations made in specific examples.

\medskip

The first observation concerns the case when as a fusion category, $\Cc$ is simply the category $\Cc_r$ of $\Zb_r$-graded $\Cb$-vector spaces for some $r>0$. The pivotal structures on $\Cc_r$ are classified by $r$'th roots of unity $\zeta$. 
Write $\Cb_u$ for the one-dimensional vector space concentrated in grade $u \in \Zb_r$.
The right and left quantum dimension of $\Cb_u$ are given by $\dim_r(\Cb_u) = \zeta^u$ and $\dim_l(\Cb_u) = \zeta^{-u}$. We take $\zeta$ to be a primitive $r$'th root of unity, so that $\Cc$ is spherical iff $r \in \{1,2\}$. One finds (Proposition~\ref{prop:SN-Zr-genusg}):

\begin{proposition}\label{prop:intro-Zr-spaces}
Let $\Sigma_g$ be a surface of genus $g$ with empty boundary. Then	
\be
	\dim \SN(\Sigma_g,\Cc_r) = 
	\begin{cases}
	r^{2g} &;~ 2-2g \text{ divisible by } r \\
	0 &;~ \text{else}
\end{cases}
\qquad .
\ee
\end{proposition}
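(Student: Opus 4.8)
\emph{Proof proposal.}
The plan is to produce an explicit spanning set of size $r^{2g}$ for $\SN(\Sigma_g,\Cc_r)$, to extract one extra global relation whose validity depends on $\Sigma_g$ only through its Euler characteristic, and then to split into the two cases. First I would fix a CW structure on $\Sigma_g$ with a single $0$-cell $v$, the $2g$ standard $1$-cells $a_1,\dots,a_g,b_1,\dots,b_g$, and a single $2$-cell $D$ (the $4g$-gon with boundary word $a_1b_1a_1^{-1}b_1^{-1}\cdots a_gb_ga_g^{-1}b_g^{-1}$), and write $\Gamma$ for its $1$-skeleton. By the general reduction recalled in Section~\ref{sec:stringnet} — any graph may be isotoped off the open $2$-cell, and the part lying inside a disc evaluated in $\Cc_r$ — the space $\SN(\Sigma_g,\Cc_r)$ is spanned by string-nets supported in a neighbourhood of $\Gamma$. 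Since every simple object of $\Cc_r$ is invertible and all Hom-spaces of $\Cc_r$ are at most one-dimensional, such a string-net is determined up to a scalar by the grades $x_1,\dots,x_g,y_1,\dots,y_g\in\Zb_r$ carried by the $2g$ loops: the space of morphisms at $v$ is $\Cc_r(\one,\Cb_{x_1}\otimes\Cb_{y_1}\otimes\Cb_{-x_1}\otimes\Cb_{-y_1}\otimes\cdots)=\Cc_r(\one,\one)$, which is one-dimensional. Writing $e_{\vec x}$ for the resulting normalised class, $\vec x\in(\Zb_r)^{2g}$, this already gives $\dim\SN(\Sigma_g,\Cc_r)\le r^{2g}$.

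The core step is to establish the relation
\be
	\zeta^{(2-2g)u}\,e_{\vec x} \;=\; e_{\vec x}
	\qquad\text{for all }\ \vec x\in(\Zb_r)^{2g},\ u\in\Zb_r .
\ee
This is the surface analogue of the mechanism behind Proposition~\ref{prop:intro-S2dim}: inserting a small contractible circle labelled $\Cb_u$ into $e_{\vec x}$ and removing it again by the disc relation multiplies the class by $\dim_r(\Cb_u)=\zeta^u$ if it is contracted through the $2$-cell $D$, and by $\dim_l(\Cb_u)=\zeta^{-u}$ if it is contracted through a disc on the opposite side; on $S^2$ both options are available, which is what forces $\zeta^{u}e=\zeta^{-u}e$ there. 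On $\Sigma_g$ a single small circle cannot be contracted on both sides, so instead I would compare the two ways of removing a circle running parallel to $\partial D$: contracting it into $D$ costs $\zeta^{u}$ (one $2$-cell), whereas pushing it onto $\Gamma$ and erasing it edge by edge and around the vertex — equivalently, iterating the one-handle version of the $S^2$ move along a connected-sum-of-$g$-tori (or pair-of-pants) decomposition — costs $\zeta^{-u}$ once per $1$-cell and $\zeta^{u}$ at the $0$-cell. Bookkeeping the powers of $\zeta$ against the CW data $\chi(\Sigma_g)=V-E+F=1-2g+1$ yields the exponent $2-2g$.

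With the relation in hand the conclusion is immediate. If $r\nmid 2-2g$, then since $\zeta$ is a primitive $r$-th root of unity one may choose $u$ with $\zeta^{(2-2g)u}\neq1$, whence $e_{\vec x}=0$ for every $\vec x$ and $\SN(\Sigma_g,\Cc_r)=0$. If $r\mid 2-2g$ the relation is vacuous and it remains to see that the $r^{2g}$ classes $e_{\vec x}$ are linearly independent; for $g=1$ this is exactly the preceding proposition on the torus, since $\Zc(\Cc_r)$ has $r^{2}$ simple objects, and for general $g$ I would deduce it from the gluing behaviour of $\SN$ — either by exhibiting a nondegenerate pairing of $\SN(\Sigma_g,\Cc_r)$ with the string-net space of the orientation-reversed surface that is diagonal on the $e_{\vec x}$, or by cutting along a non-separating curve and inducting on $g$. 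The main obstacle is the second paragraph: one must pin down precisely the power of $\zeta$ accumulated when a loop is transported across the spine, and in particular that it sees $\Sigma_g$ only through $\chi$. Conceptually this amounts to identifying $\SN(\Sigma_g,\Cc_r)$ with the free vector space on the set of $r$-spin structures on $\Sigma_g$ — the non-spherical pivotal structure being exactly the datum that upgrades the naive $\Zb_r$-``flat connections'' to $r$-spin structures, whose existence obstruction is the divisibility of $2g-2$ by $r$ and whose number is then $r^{2g}$ — and getting that correspondence right, rather than the routine linear algebra around it, is where the work lies.
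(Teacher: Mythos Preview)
Your strategy is close in spirit to the paper's --- both compute on the standard $4g$-gon presentation and both isolate a single scalar relation whose exponent is the Euler characteristic --- but the paper organises the argument through the puncture/projector machinery of Section~\ref{sec:string-compute}, and that is exactly what closes the two gaps you leave open. The paper first passes to $\SN(\Sigma_g-\{p\})$, which by Theorem~\ref{thm:punctured-to-Hom} is \emph{canonically} isomorphic to $\Cc_r(\one,H^{\otimes g})\cong\Cb^{r^{2g}}$; there the $r^{2g}$ generators are linearly independent by construction, not something to be checked after the fact. Then Theorem~\ref{thm:remove-puncture} identifies $\SN(\Sigma_g)$ with the image of the idempotent $B_p$, and the entire computation reduces to evaluating $\tilde B_p$ as an endomorphism of $\Cc_r(\one,H^{\otimes g})$. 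Using the symmetric braiding of $\Cc_r$ the paper shows $\tilde B_p(\varphi)=\big(\tfrac{1}{r}\sum_{u}\zeta^{(2-2g)u}\big)\varphi$, i.e.\ $\tilde B_p$ is either the identity or zero. Both the upper and lower bound on the dimension fall out at once.

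By contrast, your step~2 --- ``push the $\partial D$-parallel loop onto $\Gamma$ and erase it edge by edge'' --- is not a well-defined string-net move as stated: the complement of the open $2$-cell is not a disc, so the loop cannot be contracted there, and sliding a strand across another strand is not among the local relations unless you explicitly introduce a vertex carrying the braiding morphism. Making your bookkeeping honest amounts to reproducing the $\tilde B_p$ computation in disguise. More seriously, your step~4 is a genuine gap: in $\SN(\Sigma_g)$ you have no a priori reason the $e_{\vec x}$ are independent, and the fixes you propose (a nondegenerate pairing, or cutting and inducting on $g$) presuppose gluing/functoriality properties of string-net spaces that are not developed here. The projector route sidesteps this entirely, since independence is already known on the punctured surface and $\tilde B_p=\id$ in the divisible case.
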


This illustrates that in the non-spherical case, property 3 from above is in general violated for $g \neq 1$. One might also notice that the above condition is the same as that for the existence of an $r$-spin structure on $\Sigma_g$. Let us denote the set of isomorphism classes of $r$-spin structures on $\Sigma_g$ by $\R^r(\Sigma_g)$. Then the above dimension agrees with the number of $r$-spin structures, $\dim\SN(\Sigma_g,\Cc_r) = |\R^r(\Sigma_g)|$ for all $g\ge 0$. There is a natural action of $\MCG(\Sigma_g)$ -- the mapping class group of $\Sigma_g$ -- by push-forward on $r$-spin structures and on graphs on $\Sigma_g$ . It turns out that these actions agree (Theorem~\ref{thm:MCG-acts-on-subspace}).

\begin{theorem}
The $\MCG(\Sigma_g)$-representations $\mathrm{span}_{\Cb}(\R^r(\Sigma_g))$ and
$\SN(\Sigma_g,\Cc_r)$ are isomorphic.
\end{theorem}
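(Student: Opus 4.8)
The plan is to construct an explicit $\MCG(\Sigma_g)$-equivariant isomorphism between $\mathrm{span}_{\Cb}(\R^r(\Sigma_g))$ and $\SN(\Sigma_g,\Cc_r)$ by exhibiting a basis of the string-net space indexed by $r$-spin structures, and then checking that the mapping class group acts the same way on both sides. First I would recall, from the earlier analysis leading to Proposition~\ref{prop:intro-Zr-spaces}, how a basis of $\SN(\Sigma_g,\Cc_r)$ arises: since $\Cc_r$ is pointed with invertible simple objects $\Cb_u$, $u\in\Zb_r$, a graph on $\Sigma_g$ with edges labelled by the $\Cb_u$ and vertices labelled by (unique up to scalar) morphisms reduces, via the local relations, to data that is essentially a $\Zb_r$-valued $1$-cocycle: at each trivalent vertex the incident grades must add up to $0$, i.e.\ the labelling is a cellular $1$-cocycle for a chosen CW structure on $\Sigma_g$, and fusion/bubble moves implement the coboundary relations together with the pivotal sign $\zeta$. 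The upshot one wants is that $\SN(\Sigma_g,\Cc_r)$ has a basis naturally in bijection with a torsor over $H^1(\Sigma_g;\Zb_r)\cong\Zb_r^{2g}$, namely the set $\R^r(\Sigma_g)$ of $r$-spin structures, which is non-empty precisely when $2-2g\equiv 0 \pmod r$ (the Euler-class obstruction), matching the dimension count $r^{2g}$.

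The key geometric input is the standard identification of $r$-spin structures on an oriented surface with $\Zb_r$-valued functions on (isotopy classes of) simple closed curves satisfying the quadratic/cocycle compatibility under the action of Dehn twists, equivalently with suitable cohomology classes relative to the tangent-framing data; I would cite this description and phrase the string-net basis in exactly the same combinatorial terms. Concretely, to a graph $\Gamma$ on $\Sigma_g$ I associate the function sending a simple closed curve $\gamma$ to the total $\Zb_r$-grade that $\Gamma$ carries across $\gamma$ (counted with signs from the orientations and with the pivotal twist $\zeta$ accounting for the framing anomaly of $\gamma$). The local relations in the string-net space are precisely what is needed to show this function is well defined on the quotient and depends only on the isotopy class of $\gamma$, and that the resulting assignment descends to a linear map $\SN(\Sigma_g,\Cc_r)\to\mathrm{span}_{\Cb}(\R^r(\Sigma_g))$; surjectivity is clear by constructing, for each $r$-spin structure, a representative graph (e.g.\ a single edge along each handle curve with the prescribed grade), and injectivity follows from the dimension match in Proposition~\ref{prop:intro-Zr-spaces}.

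Next I would establish equivariance. The mapping class group acts on $\SN(\Sigma_g,\Cc_r)$ by pushing forward graphs, and on $\R^r(\Sigma_g)$ by pushing forward $r$-spin structures; it suffices to check compatibility on a generating set of $\MCG(\Sigma_g)$, for which I would use the Humphries (or Lickorish) generators consisting of Dehn twists along an explicit finite collection of simple closed curves. For a Dehn twist $T_\gamma$, pushing a graph forward replaces, near $\gamma$, each transverse edge of grade $u$ by an edge that additionally winds once around $\gamma$; using the local relations one rewrites this, and reads off that the associated cocycle/curve-function is modified by exactly the transvection formula $c\mapsto c + \langle c,[\gamma]\rangle[\gamma]^\vee$ that also governs the $\MCG$-action on $r$-spin structures. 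Since a linear map between the two spaces that intertwines a generating set of the group is equivariant, this finishes the proof.

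The main obstacle I anticipate is bookkeeping of the pivotal sign $\zeta$: one must verify that the framing/self-intersection contributions built into the string-net evaluation (where a closed loop of grade $u$ evaluates not to $1$ but to $\zeta^{u}$ or $\zeta^{-u}$ depending on orientation) match exactly the quadratic-refinement data distinguishing $r$-spin structures from mere $H^1(\Sigma_g;\Zb_r)$-classes, rather than off by a sign or a shift. Getting this normalization right — and in particular confirming that it is consistent with the non-emptiness criterion $r\mid 2-2g$ already used in Proposition~\ref{prop:intro-Zr-spaces} — is where the real content lies; once the dictionary "grade across a curve $\leftrightarrow$ value of the $r$-spin structure" is pinned down, both the isomorphism and its $\MCG$-equivariance are essentially forced.
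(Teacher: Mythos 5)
There is a genuine gap: you never actually construct the linear map $\SN(\Sigma_g,\Cc_r)\to\mathrm{span}_{\Cb}(\R^r(\Sigma_g))$ on which the whole argument rests. The invariant you propose, the total $\Zb_r$-grade carried by a graph across a simple closed curve $\gamma$, is indeed well defined on string-net classes (grading conservation at vertices), but it only produces a class in $H^1(\Sigma_g;\Zb_r)$. An $r$-spin structure is not such a class: $\R^r(\Sigma_g)$ is a torsor over $H^1(\Sigma_g;\Zb_r)$ with no canonical base point, and the extra information is exactly the winding-number (framing) data that would have to be extracted from the $\zeta$-dependent part of the string-net evaluation (loops of grade $u$ evaluating to $\zeta^{\pm u}$, cap/cup contributions, the choice of total orders at vertices). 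You explicitly defer this step as ``where the real content lies,'' but without it there is no map, the surjectivity argument (``a single edge along each handle curve with the prescribed grade'') has no target to hit, and the Dehn-twist check cannot be performed: the transvection formula you quote is the action on $H^1(\Sigma_g;\Zb_r)$, whereas on the torsor $\R^r(\Sigma_g)$ a Dehn twist acts affinely, and verifying that the affine parts agree is again precisely the missing bookkeeping. Note also that matching dimensions plus the observation that both sides are torsors over $H^1(\Sigma_g;\Zb_r)$ cannot substitute for this: two such torsors with $\MCG(\Sigma_g)$-actions need not be equivariantly isomorphic, and producing an equivariant map is the entire content of the theorem.

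For comparison, the paper resolves exactly this point by going in the opposite direction. It uses the combinatorial model \eqref{eq:marked-PLCW-vs-rspin} of $r$-spin structures by marked PLCW decompositions with $\Zb_r$-valued edge indices, and assigns to each marking a coloured graph built from the $\Delta$-separable Frobenius algebra $F=\Cb\Zb_r$ in $\Cc_r$ (Example~\ref{ex:Zr-group-alg}), whose Nakayama automorphism has order $r$; the winding data of the spin structure is carried by the powers $N^{s_e}$ in \eqref{eq:C-graph-for-marked-PLCW}. Independence of the chosen marking (Lemma~\ref{lem:hat-sig-dep-on-spin}) replaces your missing well-definedness argument, $\MCG(\Sigma_g)$-equivariance is automatic because the assignment is local and commutes with push-forward (Proposition~\ref{prop:r-spin-to-string-net}) -- no generator-by-generator Dehn-twist computation is needed -- and bijectivity is established by an explicit computation on the one-face decomposition of $\Sigma_g$, where the images become the vectors $v_{a,b}$ forming a basis of $\Cc_r(\one,H^{\otimes g})\cong\SN(\Sigma_g,\Cc_r)$ via Proposition~\ref{prop:SN-Zr-genusg}. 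If you want to keep your direction of the map, you would in effect have to rebuild this same dictionary between $\zeta$-factors and winding numbers, so the gap is not a formality but the core of the proof.
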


A future application of this isomorphism that one could hope for is that the (variant of a) modular functor obtained from $\SN(-,\Cc_r)$ could be used in the construction of two-dimensional conformal field theories defined on Riemann surfaces equipped with an $r$-spin structure, i.e.\ with an $r$'th root of the canonical bundle.

\medskip

The second observation concerns the case where we are given a modular fusion category\footnote{
	The term ``modular tensor category'' is sometimes used to include cases where the underlying monoidal category is not necessarily semisimple, but instead is a finite tensor category in the sense of \cite{Etingof:2004}. Here we work only in the semisimple setting and stress this by using the term modular fusion category.
	\label{fn:mod-fus-cat}
}	 
$\Mc$ and take $\Cc = \Mc$ as fusion categories. The possible pivotal structures on $\Mc$ are a torsor over the group of natural monoidal isomorphism of the identity functor on $\Mc$, and this group in turn is isomorphic to the group of (isoclasses of) invertible simple objects in $\Mc$ \cite{Drinfeld:0906}. Let $J \in \Mc$ be such an  invertible simple object and equip $\Cc$ with the pivotal structure defined by $J$. 
The right and left quantum dimension of an object $U \in \Cc$ are
$\dim_r^\Cc(U) = s^{\Mc}_{J,U}/s^{\Mc}_{J,\one}$ and $\dim_l^\Cc(U) = s^{\Mc}_{J^\vee,U}/s^{\Mc}_{J^\vee,\one}$, where $s^{\Mc}_{X,Y} = \tr^\Mc(c_{Y,X} \circ c_{X,Y})$ refers to the invariant of the Hopf link computed in $\Mc$. It follows that $\Cc$ is spherical iff the order of $J$ is 1 or 2. Since $\Mc$ is modular, we have $\Zc(\Cc) \cong \Cc \boxtimes \Cc^{\text{rev}}$, so that we can label simple objects in $\Zc(\Cc)$ by pairs $(U,V)$ with $U,V \in \Cc$ simple. Denote by $S^2(U,V)$ a sphere with one marked point labelled by $(U,V) \in \Zc(\Cc)$. 
If one extends the string-net construction to include marked points labelled by objects of $\Zc(\Cc)$ as in \cite{Kirillov:2011mk}, one finds (Proposition~\ref{prop:S2-ZC-markedpoint}):

\begin{proposition}
$\SN(S^2(U,V),\Cc)$ is one-dimensional if $U \cong V^\vee \cong J\otimes J$ and zero-dimensional else.
\end{proposition}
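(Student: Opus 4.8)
My plan is to reduce the assertion to a statement about the Drinfeld centre of $\Cc$ and then make that statement explicit using the equivalence $\Zc(\Cc)\simeq\Cc\boxtimes\Cc^{\text{rev}}$ coming from the modularity of $\Mc$. First, I cut $S^2$ along an embedded circle into a disc $D_1$ carrying the $Z$-marked point and a disc $D_2$ with no markings. By the gluing behaviour of string-net spaces, together with the identification of the string-net category of the circle with $\Zc(\Cc)$ — as in \cite{Kirillov:2011mk} for the spherical case, but using only the pivotal structure set up in Section~\ref{sec:stringnet} — this presents $\SN(S^2(Z),\Cc)$ as $\Hom_{\Zc(\Cc)}(Z,v)$, where $v\in\Zc(\Cc)$ is the object represented by the empty disc $D_2$ under this identification. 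Since $Z$ is simple, this space is one-dimensional if $Z\cong v$ and zero otherwise, so it remains to identify $v$.

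I would pin $v$ down through two observations. First, its underlying object is $\forget(v)\cong\one$, because no edge of the empty disc meets the cutting circle; in particular $v$ is automatically simple, since any object of $\Zc(\Cc)$ with underlying object $\one$ has endomorphism algebra $\Cb$. Second, the half-braiding of $v$ with a simple $W\in\Cc$ is the scalar $\dim_r^\Cc(W)/\dim_l^\Cc(W)$; this is the point at which non-sphericality enters, the reason being that sliding a small $W$-loop across the disc $D_2$ in the two available ways differs by the value of a $W$-circle bounding a disc on one side versus the other, that is, by $\dim_r^\Cc(W)$ versus $\dim_l^\Cc(W)$. As a consistency check, $v\cong\one$ exactly when this character is trivial, i.e.\ when $\Cc$ is spherical, which agrees with Proposition~\ref{prop:SN-S2-1d-or-0d} for $Z=\one$. (If the paper establishes this description of $v$ for general pivotal fusion categories beforehand, this step reduces to a citation.)

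It then remains to perform the identification in the present case. Under $\Zc(\Cc)\simeq\Cc\boxtimes\Cc^{\text{rev}}=\Mc\boxtimes\Mc^{\text{rev}}$ the invertible objects are the $P\boxtimes Q$ with $P,Q$ invertible simple objects of $\Mc$; the corresponding object of $\Zc(\Cc)$ has underlying object $P\otimes Q$, so $\forget(v)\cong\one$ forces $Q\cong P^\vee$, and a direct computation of the induced half-braiding shows that $P\boxtimes P^\vee$ acts on a simple $W$ by the monodromy scalar $\theta_{P\otimes W}\,\theta_P^{-1}\,\theta_W^{-1}$ of $W$ with $P$. On the other hand, substituting $\dim_r^\Cc(W)=s^{\Mc}_{J,W}/s^{\Mc}_{J,\one}$ and $\dim_l^\Cc(W)=s^{\Mc}_{J^\vee,W}/s^{\Mc}_{J^\vee,\one}$ and using that $J$ is invertible, one finds that $\dim_r^\Cc(W)/\dim_l^\Cc(W)$ is the \emph{square} of the monodromy of $W$ with $J$, hence equals $\theta_{J^{\otimes2}\otimes W}\,\theta_{J^{\otimes2}}^{-1}\,\theta_W^{-1}$. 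Comparing the two expressions for all $W$ and using the non-degeneracy of $\Mc$ gives $P\cong J^{\otimes2}$; thus $v$ is the simple object labelled by the pair $(U,V)$ with $U\cong V^\vee\cong J\otimes J$, which is the claim.

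I expect the second observation to be the main obstacle: one has to read off the half-braiding of the vacuum object of $\Zc(\Cc)$ directly from the string-net definition, keeping the left and right dualities of $\Cc$ strictly apart and taking care not to slip in a hidden use of sphericality. Getting the direction of the discrepancy right — so that the answer is $U\cong J\otimes J$ rather than $J^\vee\otimes J^\vee$ — and seeing why it is the \emph{squared} monodromy that appears, so that $J^{\otimes2}$ rather than $J$ comes out, are the delicate points. By comparison, the action of $\Cc\boxtimes\Cc^{\text{rev}}\simeq\Zc(\Cc)$ on half-braidings and the ribbon-calculus manipulation of $\dim_r^\Cc/\dim_l^\Cc$ are routine.
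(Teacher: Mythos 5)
Your endgame is right, and it is essentially the same computation the paper performs: the ratio $\dim_r^\Cc(W)/\dim_l^\Cc(W)=s^{\Mc}_{J,W}/s^{\Mc}_{J^\vee,W}$ is the monodromy character of $J\otimes J$, and non-degeneracy of $\Mc$ (orthogonality of the $s$-matrix) then singles out the simple object $(J\otimes J,(J\otimes J)^\vee)$ of $\Zc(\Cc)\cong\Cc\boxtimes\Cc^{\mathrm{rev}}$. The paper reaches exactly this via $\frac{1}{\Dim(\Mc)}\sum_{R\in\Ic}s^{\Mc}_{(JJ)^\vee,R}\,s^{\Mc}_{R,U}/\dim^{\Mc}(U)=\delta_{JJ,U}$.

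The genuine gap is your first step. In Section~\ref{sec:bg-charge} the space $\SN(S^2(Z),\Cc)$ is \emph{defined} as the span of the specific vectors $\Psi_\Gamma$ inside $\SN(\mathbf{D}(\forget(Z)),\Cc)\cong\Cc(\one,\forget(Z))$, i.e.\ as the image of the explicit operation ``compose with a $\dim_l(V)/\Dim(\Cc)$-weighted $V$-loop crossing the $\forget(Z)$-edge via the half-braiding''. A proof therefore has to compute with that definition; this is what the paper does, by writing every element as $\tilde\varphi$ for some $\varphi\in\Cc(\one,U\otimes V)$ and evaluating the loop in $\Mc$ to get $\tilde\varphi=\delta_{JJ,U}\,\varphi$. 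Your reduction $\SN(S^2(Z),\Cc)\cong\Hom_{\Zc(\Cc)}(Z,v)$ instead presupposes a gluing theorem along circles, the identification of the string-net category of the circle with $\Zc(\Cc)$, and the existence of a well-defined ``vacuum object'' $v\in\Zc(\Cc)$ attached to the unmarked disc — none of which is established in the non-spherical setting, either here or in \cite{Kirillov:2011mk}; the paper explicitly declines to develop this formalism and gives the ad hoc definition precisely to avoid it. The same issue infects your identification of $v$: the loop-sliding heuristic produces a character only up to the choice between $\dim_r/\dim_l$ and its inverse, and without engaging the actual definition (its $\dim_l(V)$ weight and loop orientation) you cannot fix this, so your argument as it stands cannot distinguish the claimed answer $U\cong J\otimes J$ from $U\cong J^\vee\otimes J^\vee$ — a delicacy you yourself flag. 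To close the gap you would either have to prove the gluing/annulus-category statements for non-spherical pivotal fusion categories (building on Proposition~\ref{prop:ZC-Hom-SN}), or bypass them and compute the weighted-loop operator directly on $\Cc(\one,U\otimes V)$, which is the paper's route.
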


The precise definition of $\SN(S^2(U,V),\Cc)$ used here is given in Section~\ref{sec:bg-charge}. If the marked point is labelled by the tensor unit $\one \in \Zc(\Cc)$, one obtains the string-net space on the sphere without marked points. 
Since $J \otimes J \cong \one$ iff $\Cc$ is spherical, this agrees with Proposition~\ref{prop:intro-S2dim}. The interesting aspect of this result is that the dimension can be nonzero for a non-unit insertion.

As for the first observation, the above result may have applications in the construction of two-dimensional conformal field theories. The focus would be on theories with deformed stress-energy tensor (cf.\ Remark~\ref{rem:bg-charge}). A prominent example of this are topologically twisted $N=2$ supersymmetric conformal field theories. The appearance of non-spherical pivotal categories in this context has been observed in \cite{Carqueville:2010hu}. In fact, in the examples treated there one finds the non-spherical pivotal category $\Cc_r$ from the first observation as a pivotal subcategory of a certain category of matrix factorisations.

\medskip

The two potential applications to conformal field theory mentioned above rest on the 
construction of consistent systems of correlators in the sense of \cite{Fjelstad:2005ua,Fuchs:2016wjr}. The description of such systems in terms of string-net spaces is being developed in \cite{SY-prep}.

\medskip

Pivotal fusion categories which are not necessarily spherical have appeared in contexts related to string-net models in the literature before:

In the context of Turaev-Viro TQFT, 
in \cite[Sec.\,3.1]{Turaev:2010pp} vector spaces assigned to coloured graphs on surfaces are considered for pivotal fusion categories, but only before the projection to the state space.

In \cite{Morrison:2010} a construction of TQFT state spaces in terms of so-called fields is described, where the state space is obtained via a quotient by local relations. The construction can be applied to pivotal fusion categories \cite[Sec.\,2.2]{Morrison:2010} and in this case should be equivalent to the string-net construction.

It was shown in \cite{Douglas:2013aea} that fusion categories are fully dualisable in the 3-category of finite tensor categories. They thus define three-dimensional TQFTs on framed manifolds. In particular, one obtains an assignment of vector spaces to framed 2-manifolds without requiring a pivotal or spherical structure. It would interesting to understand the precise relation to the string-net construction for non-spherical pivotal fusion categories.

In \cite{MV-prep}, the action of surface mapping class groups is investigated for generalisations of Kitaev models defined in terms of pivotal (but not necessarily spherical) Hopf algebras.

String-net spaces for fusion categories with $\Zb_r$ fusion rules have been considered in \cite{Hung:2012kc,Lin:2014aca}. The pivotal structures used there are always spherical, 
but the associators are more general than those considered here.
In \cite{Hung:2012kc} some quantum dimensions are allowed to take the value $-1$. In \cite{Lin:2014aca} more general pivotal structures appear implicitly (see Eqn.\,21 there), but it is argued that for their choice of lattice one can fix all quantum dimensions to be $+1$ by a gauge transformation.

\medskip

This paper is organised as follows. In Section~\ref{sec:pivotal} some notation and properties of pivotal fusion categories are introduced. Section~\ref{sec:stringnet} contains the definition of string-net spaces, and Section~\ref{sec:string-compute} explores some of their general properties. Sections~\ref{sec:stringnet} and~\ref{sec:string-compute} are mostly a review of \cite{Kirillov:2011mk}, but some extra care has to be taken in the non-spherical setting. In Section~\ref{sec:Zr-graded-vsp} the string-net spaces for $\Zb_r$-graded vector spaces are computed, and Sections~\ref{sec:r-spin} and~\ref{sec:bg-charge} contain the application to $r$-spin structures and to spheres with one marked point, respectively. The slightly lengthy proof of a technical lemma has been moved to the appendix. 

\medskip

\noindent
{\bf Acknowledgements:} I would like to thank
	Alexei Davydov,
	Catherine Meusburger,
	Christoph Schweigert,
	L\'or\'ant Szegedy,
	G\'erard Watts,
and
	Yang Yang
for helpful discussions and comments. I am grateful to the Department of Mathematics at King's College London for hospitality during a half-year sabbatical in 2019, when this research has been conducted.

\vfill

\noindent
{\bf Convention:} Throughout this paper, $\Bbbk$ denotes an algebraically closed field of characteristic zero.

\newpage

\section{Pivotal fusion categories}\label{sec:pivotal}

Let $\Cc$ be a fusion category over $\Bbbk$, that is, $\Cc$ is a $\Bbbk$-linear finitely semisimple abelian rigid monoidal category with bilinear tensor product functor and simple tensor unit, see e.g.~\cite[Sec.\,4.1]{EGNO-book}. 
Let $\Ic$ denote a choice of representatives of the isomorphism classes of simple objects in $\Cc$.

We denote the left dual of an object $X \in\Cc$ by $X^\vee$ and write the duality maps as
\be\label{eq:left-duals}
	\evL_X: X^\vee \otimes X \to \one \quad , \qquad 
	\coevL_X : \one \to X \otimes X^\vee \ .
\ee
We assume that $\Cc$ is pivotal, i.e.\ that there is a natural monoidal isomorphism
\be
\delta : (-) \longrightarrow (-)^{\vee\vee} \ .
\ee
Since $\Cc$ is pivotal, we may take the right and left dual of an object $U$ to be identical, and we will write $U^\vee$ for both. The right duality maps are given by
\begin{align}
\evR_X &= \big[ X\otimes X^\vee \xrightarrow{\delta_X \otimes \id} X^{\vee\vee} \otimes X^\vee \xrightarrow{\evL_{\!\!X^\vee}} \one \big]
\ , \nonumber \\
\coevR_X &= \big[ \one \xrightarrow{\coevL_{\!\!X^\vee}} X^\vee \otimes X^{\vee\vee} \xrightarrow{\id \otimes \delta_X^{-1}} X^\vee \otimes X \big] 
\ .
\label{eq:right-duals}
\end{align}

Let $f : X \to X$ be a morphism in $\Cc$. The left/right trace of $f$ and the left/right dimension of $X$ are defined as (conventions taken from \cite{Barrett:1993zf})
\begin{align}
	\tr_l(f) &= 
	[\one \xrightarrow{\coevR_{\!\!X}} X^\vee \otimes X \xrightarrow{\id \otimes f} X^\vee \otimes X \xrightarrow{\evL_{\!X}} \one \big] \ ,
	\quad
	&\dim_l(X) &= \tr_l(\id_X) \ ,
\nonumber \\
	\tr_r(f) &= 
	[\one \xrightarrow{\coevL_{\!\!X}} X \otimes X^\vee \xrightarrow{f \otimes \id} X \otimes X^\vee \xrightarrow{\evR_{\!X}} \one \big] \ ,
	\quad
	&\dim_r(X) &= \tr_r(\id_X) \ .
\end{align}
The left and right dimension of $X \in \Cc$ are related by
\be\label{eq:dimlr-trlr-rel}
\dim_l(X) = \dim_r(X^\vee) \ .
\ee
The global dimension of $\Cc$ is defined as \cite[Def.\,2.5]{Muger2001a}
\be
	\Dim(\Cc) = \sum_{U \in \Ic} \dim_l(U) \dim_r(U) \ .
\ee

\begin{lemma}\label{lem:dim-non-zero}
For simple objects $U\in \Cc$ we have $\dim_{l/r}(U)\neq 0$. Furthermore, $\Dim(\Cc) \neq 0$.
\end{lemma}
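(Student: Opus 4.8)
The plan is to treat the two assertions separately, since the first is elementary and the second is not. For $\dim_{l/r}(U)\ne 0$ on a simple object $U$, I would argue directly from semisimplicity and rigidity. By \eqref{eq:dimlr-trlr-rel} we have $\dim_l(U)=\dim_r(U^\vee)$, and $U^\vee$ is again simple, so it suffices to show $\dim_r(U)\ne 0$. Unwinding the definition, $\dim_r(U)=\evR_U\circ\coevL_U$ is the scalar attached to the composite $\one\to U\otimes U^\vee\to\one$ in $\Cc(\one,\one)\cong\Bbbk$. Both of these morphisms are nonzero: if $\coevL_U=0$ or $\evR_U=0$, one of the zig-zag identities for the duality would give $\id_U=0$, contradicting $U\ne 0$. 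Moreover, by rigidity $\Cc(\one,U\otimes U^\vee)\cong\Cc(U,U)$ is one-dimensional (as $U$ is simple and $\Bbbk$ is algebraically closed), and likewise $\Cc(U\otimes U^\vee,\one)$ is one-dimensional; so decomposing $U\otimes U^\vee\cong\one\oplus R$ with $R$ having no $\one$-summand, the nonzero morphism $\coevL_U$ is a nonzero multiple of the inclusion of the $\one$-summand and $\evR_U$ a nonzero multiple of its projection. Their composite is the product of these two nonzero scalars, hence $\dim_r(U)\ne 0$. Equivalently, one phrases this as: the composition pairing $\Cc(U\otimes U^\vee,\one)\times\Cc(\one,U\otimes U^\vee)\to\Cc(\one,\one)$ is nondegenerate, and $\coevL_U,\evR_U$ are both nonzero.

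For $\Dim(\Cc)\ne 0$ one needs a global argument, since a priori the (nonzero) summands $\dim_l(U)\dim_r(U)$ could cancel. The point I would exploit is that, for each simple $U$, the scalar $\dim_l(U)\dim_r(U)\in\Bbbk$ is independent of the pivotal structure: two pivotal structures differ by a monoidal natural automorphism $\chi$ of the identity functor, which on a simple object $U$ acts by a scalar $\chi_U\in\Bbbk^\times$, and passing to the other structure multiplies $\dim_r(U)$ by $\chi_U$ and $\dim_l(U)$ by $\chi_U^{-1}$. In fact $\dim_l(U)\dim_r(U)$ coincides with the intrinsic invariant $|U|^2\in\Bbbk$ of Etingof, Nikshych and Ostrik, which is defined from an arbitrary isomorphism $U\xrightarrow{\ \sim\ }U^{\vee\vee}$ without reference to a pivotal structure. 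Consequently $\Dim(\Cc)=\sum_{U\in\Ic}|U|^2$ is exactly the global dimension of $\Cc$ as a fusion category, which is nonzero because $\mathrm{char}(\Bbbk)=0$; this is the non-vanishing theorem of Etingof--Nikshych--Ostrik (see also \cite{EGNO-book}).

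The main obstacle is precisely this last step: in contrast with the first part, $\Dim(\Cc)\ne 0$ is not accessible by elementary string-diagram manipulations and genuinely uses $\mathrm{char}(\Bbbk)=0$ — over a field of positive characteristic the global dimension of a fusion category can vanish. I would therefore give the first claim a self-contained proof and, for the second, reduce it via the pivotal-independence observation to the Etingof--Nikshych--Ostrik non-vanishing theorem (equivalently, the corresponding statement in the reference cited for the definition of $\Dim$), which I take as an external input.
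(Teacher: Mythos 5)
Your proposal is correct, and it matches the paper's treatment in substance: the paper gives no argument of its own but simply cites Bakalov--Kirillov and Etingof--Nikshych--Ostrik for $\dim_{l/r}(U)\neq 0$, and ENO (where $\mathrm{char}(\Bbbk)=0$ enters) for $\Dim(\Cc)\neq 0$. Your first part is a correct self-contained version of the standard argument from those references: $\dim_r(U)=\evR_U\circ\coevL_U$, both factors are nonzero by the zig-zag identities, and since $\Cc(\one,U\otimes U^\vee)$ and $\Cc(U\otimes U^\vee,\one)$ are one-dimensional with nondegenerate composition pairing (the $\one$-summand of $U\otimes U^\vee$ has multiplicity one), the composite is a product of two nonzero scalars; the case of $\dim_l$ follows via \eqref{eq:dimlr-trlr-rel}. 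Your second part — observing that $\dim_l(U)\dim_r(U)$ is unchanged under twisting the pivotal structure by a monoidal automorphism of the identity and coincides with the pivotal-free squared norm $|U|^2$, so that $\Dim(\Cc)$ is the ENO global dimension, nonzero in characteristic zero — is exactly the content the paper outsources to \cite{Etingof2002}, and correctly identifies that this step is not accessible by elementary diagrammatics and genuinely uses $\mathrm{char}(\Bbbk)=0$.
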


The proof can be found in \cite[Sec.\,2.4]{Baki-book} and \cite[Thm.\,2.3]{Etingof2002}. 
To show $\Dim(\Cc)\neq 0$ one uses the assumption that $\mathrm{char}(\Bbbk)=0$, see \cite[Sec.\,9.1]{Etingof2002}.

A pivotal fusion category is called {\em spherical} \cite{Barrett:1993zf} if $\tr_l(h) = \tr_r(h)$ for all $X$ and $h : X \to X$, or, equivalently, if $\dim_l(U) = \dim_r(U)$ for all $U \in \Ic$.

\begin{lemma}\label{lem:dim-sum-zero}
	The following statements are equivalent.
	\begin{enumerate}
		\item
	$\Cc$ is spherical.
	\item $\sum_{U \in \Ic} \dim_l(U) \dim_l(U) \neq 0$ \ .
	\item $\sum_{U \in \Ic} \dim_r(U) \dim_r(U) \neq 0$ \ .
\end{enumerate}
\end{lemma}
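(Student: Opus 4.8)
The plan is to prove the equivalence by a chain of implications, using Lemma~\ref{lem:dim-non-zero} and the relation $\dim_l(U) = \dim_r(U^\vee)$ from \eqref{eq:dimlr-trlr-rel} as the main tools. First I would observe that since duality $U \mapsto U^\vee$ permutes the set $\Ic$ of simple objects (up to isomorphism), and $\dim_l(U^\vee) = \dim_r(U^{\vee\vee}) = \dim_r(U)$ using pivotality, the two sums in items 2 and 3 are actually equal: reindexing the sum $\sum_U \dim_l(U)^2$ by $U \mapsto U^\vee$ turns it into $\sum_U \dim_l(U^\vee)^2 = \sum_U \dim_r(U)^2$. So items 2 and 3 are equivalent for a trivial reason, and it suffices to prove $1 \Leftrightarrow 2$.

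For the implication $1 \Rightarrow 2$: if $\Cc$ is spherical then $\dim_l(U) = \dim_r(U)$ for all $U$, so $\sum_U \dim_l(U)^2 = \sum_U \dim_l(U)\dim_r(U) = \Dim(\Cc)$, which is nonzero by Lemma~\ref{lem:dim-non-zero}. For the contrapositive of $2 \Rightarrow 1$, suppose $\Cc$ is not spherical; I want to show $\sum_U \dim_l(U)^2 = 0$. The key structural input is that the quantities $d_l(U) := \dim_l(U)$ and $d_r(U) := \dim_r(U)$ define two algebra homomorphisms (characters) from the fusion ring (Grothendieck ring, base-changed to $\Bbbk$) to $\Bbbk$, because dimension is multiplicative under tensor product and additive under direct sums. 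Write $N_U$ for the class of the simple $U$. The fusion ring $K = K_0(\Cc)\otimes\Bbbk$ is a finite-dimensional $\Bbbk$-algebra with a nondegenerate symmetric bilinear form making $\{N_U\}$ close to a "spherical-like" basis; concretely, one knows $K$ is semisimple (this is essentially where $\mathrm{char}\,\Bbbk = 0$ enters, via Lemma~\ref{lem:dim-non-zero}) and the characters $d_l, d_r$ are each associated to a one-dimensional ideal / primitive idempotent of $K$.

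The heart of the argument is then: $\sum_U d_l(U) d_r(U)$ and $\sum_U d_l(U)^2$ are, up to the standard pairing, expressible in terms of whether the characters $d_l$ and $d_r$ "land in the same block" of the semisimple algebra $K$. Precisely, for a semisimple commutative $\Bbbk$-algebra with a basis admitting this kind of Frobenius structure, the sum $\sum_U \chi(N_U)\psi(N_{U^\vee})$ over two characters $\chi, \psi$ is nonzero iff $\chi = \psi$, and the relevant "adjoint" of $d_l$ under $U \mapsto U^\vee$ is exactly $d_r$. Hence $\sum_U d_l(U) d_r(U) = \Dim(\Cc) \neq 0$ always (consistent with Lemma~\ref{lem:dim-non-zero}), whereas $\sum_U d_l(U)^2 = \sum_U d_l(U) d_l(U^{\vee\vee})$ pairs $d_l$ against the character $U \mapsto d_l(U^{\vee\vee})$ composed with duality, i.e. against $d_r$-via-duality twisted once more, and this vanishes unless $d_l = d_r$, that is, unless $\Cc$ is spherical. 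I would make this rigorous by decomposing the regular representation of $K$ into one-dimensional pieces and computing the idempotent $e_\chi$ for a character $\chi$; the coefficient extraction then yields the orthogonality relation $\sum_U \chi(N_U) \psi(N_{U^\vee}) = \delta_{\chi,\psi}\cdot(\text{nonzero})$.

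The main obstacle I anticipate is justifying that the relevant sums behave like character-orthogonality sums — i.e. setting up cleanly that $K_0(\Cc)\otimes\Bbbk$ is semisimple and that the pairing $\langle N_U, N_V\rangle = \delta_{U, V^\vee}$ (or the induced trace form) is nondegenerate, so that the characters $d_l, d_r$ correspond to rank-one projectors whose "overlap" is governed by whether they coincide. Once that algebraic framework is in place the computation is short; alternatively, if one wants to avoid invoking semisimplicity of the fusion ring explicitly, one can instead argue directly: the element $\sum_U d_l(U)\, N_{U^\vee} \in K$ is (up to scalar) the primitive idempotent projecting onto the character $d_l$, and applying $d_l$ versus $d_r$ to it gives the two sums, with $d_l$ of its own idempotent nonzero and $d_r$ of it zero precisely when $d_r \neq d_l$. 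I would choose whichever of these two presentations is shorter given the conventions already fixed in the paper.
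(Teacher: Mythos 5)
Your proposal is correct and is essentially the paper's own proof: the paper likewise gets $2\Leftrightarrow 3$ from $\dim_l(X)=\dim_r(X^\vee)$, gets $1\Rightarrow 2$ from $\Dim(\Cc)\neq 0$, and proves $2\Rightarrow 1$ by regarding $\dim_l,\dim_r$ as one-dimensional representations of the $\Bbbk$-linear Grothendieck ring, a symmetric Frobenius algebra with pairing $N_{UV}^{\,\one}=\delta_{[U],[V^\vee]}$, and applying the Curtis--Reiner averaging map $m\mapsto\sum_{U\in\Ic}[U].h([U^\vee].m)$ --- which is precisely your second formulation via the element $e=\sum_{U}\dim_l(U)\,N_{U^\vee}$, on which $\dim_r$ must vanish unless $\dim_l=\dim_r$. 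Do prefer that second formulation over your first: the Grothendieck ring of a fusion category need not be commutative, its semisimplicity (and the nonvanishing of $\sum_U\chi(N_U)\chi(N_{U^\vee})$ for a general character $\chi$) is a separate fact not supplied by Lemma~\ref{lem:dim-non-zero}, and none of it is needed, since the only orthogonality direction you use ($\chi\neq\psi$ forces the sum to vanish) already follows from the symmetric Frobenius structure alone.
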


\begin{proof}
Parts 2 and 3 are equivalent by \eqref{eq:dimlr-trlr-rel}. Part 1 implies Part 2 by Lemma~\ref{lem:dim-non-zero}. 

Assume now that $\sum_{U \in \Ic} \dim_l(U) \dim_l(U) \neq 0$. We will show that then $\dim_l(U) = \dim_r(U)$ for all $U \in \Ic$.

Let $A$ be the $\Bbbk$-linear Grothendieck ring of $\Cc$, i.e.\ the finite-dimensional $\Bbbk$-algebra with basis $[U]$ for $U \in \Ic$ and structure constants given by the fusion rules $N_{UV}^{~W}$. Then $A$ is a symmetric Frobenius algebra with non-degenerate pairing $N_{UV}^{~\one} = \delta_{[U],[V^\vee]}$. For any two $A$-modules $M,N$ and any linear map $h : M\to N$, 
the map $m \mapsto \sum_{U \in \Ic} [U].h\big([U^\vee].m\big)$ is an $A$-intertwiner (see e.g.\ \cite[Lem.\,(62.8)]{Curtis-Reiner-book}).
The $\Bbbk$-linear extensions $\dim_{l/r}(-) : A \to \Bbbk$ are one-dimensional representations of $A$. Applying the above results to $M,N = \Bbbk$ with representation maps $\dim_r(-)$, $\dim_l(-)$, respectively, and $h = \id_{\Bbbk}$, results in the sum $\sum_{U \in \Ic} \dim_l(U) \dim_r(U^\vee)$, which is non-zero by assumption (recall \eqref{eq:dimlr-trlr-rel}). But this implies there is a non-zero $A$-intertwiner $M \to N$, and this can only be the case if $\dim_l(-) = \dim_r(-)$.
\end{proof}

We will employ the standard graphical calculus in pivotal fusion categories, following the conventions in \cite[Sec.\,2.3]{Baki-book}. The duality maps in \eqref{eq:left-duals} and \eqref{eq:right-duals} are represented in string diagram notation as:
\be
\evL_X = 
\begin{tikzpicture}[baseline=0em,very thick,decoration={markings,mark=at position 0.5 with {\arrow{>}}}]
\draw [postaction={decorate}](0,0)  arc  (0:180:0.5);
\node at (-1,-0.3) {\small $X^\vee$};
\node at (0,-0.3) {\small $X$};
\end{tikzpicture}
~,~~
\evR_X = 
\begin{tikzpicture}[baseline=0em,very thick,decoration={markings,mark=at position 0.5 with {\arrow{<}}}]
\draw [postaction={decorate}](0,0)  arc  (0:180:0.5);
\node at (-1,-0.3) {\small $X$};
\node at (0,-0.3) {\small $X^\vee$};
\end{tikzpicture}
~,~~
\coevL_X = 
\begin{tikzpicture}[baseline=0em,very thick,decoration={markings,mark=at position 0.5 with {\arrow{>}}}]
\draw [postaction={decorate}](0,0)  arc  (0:-180:0.5);
\node at (-1,0.3) {\small $X$};
\node at (0,0.3) {\small $X^\vee$};
\end{tikzpicture}
~,~~
\coevR_X = 
\begin{tikzpicture}[baseline=0em,very thick,decoration={markings,mark=at position 0.5 with {\arrow{<}}}]
\draw [postaction={decorate}](0,0)  arc  (0:-180:0.5);
\node at (-1,0.3) {\small $X^\vee$};
\node at (0,0.3) {\small $X$};
\end{tikzpicture}
\ .
\ee
In particular, string diagrams in this paper are read from bottom to top.
The left/right traces and dimensions are
\be
\mathrm{tr}_l(f) = 
\begin{tikzpicture}[baseline=-1em,very thick]
\begin{scope}[decoration={markings,mark=at position 0.5 with {\arrow{>}}}]
\draw [postaction={decorate}](0,0)  arc  (0:180:0.5);
\end{scope}
\begin{scope}[decoration={markings,mark=at position 0.5 with {\arrow{<}}}]
\draw [postaction={decorate}](0,-0.8)  arc  (0:-180:0.5);
\end{scope}
\draw (-1,0) -- (-1,-0.8);
\draw (0,0) -- (0,-0.8);
\node [very thick,draw,outer sep=0,inner sep=4,minimum size=10, fill=white] at (0,-0.4) {\small $f$};
\node at (0.2,0.3) {\small $X$};
\end{tikzpicture}
~,~~
\mathrm{tr}_r(f) = 
\begin{tikzpicture}[baseline=-1em,very thick]
\begin{scope}[decoration={markings,mark=at position 0.5 with {\arrow{<}}}]
\draw [postaction={decorate}](0,0)  arc  (0:180:0.5);
\end{scope}
\begin{scope}[decoration={markings,mark=at position 0.5 with {\arrow{>}}}]
\draw [postaction={decorate}](0,-0.8)  arc  (0:-180:0.5);
\end{scope}
\draw (-1,0) -- (-1,-0.8);
\draw (0,0) -- (0,-0.8);
\node [very thick,draw,outer sep=0,inner sep=4,minimum size=10, fill=white] at (-1,-0.4) {\small $f$};
\node at (-1.2,0.3) {\small $X$};
\end{tikzpicture}
~,~~
\dim_l(X) = 
\begin{tikzpicture}[baseline=-.2em,very thick]
\begin{scope}[decoration={markings,mark=at position 0.5 with {\arrow{>}}}]
\draw [postaction={decorate}](0,0)  arc  (0:180:0.5);
\end{scope}
\begin{scope}[decoration={markings,mark=at position 0.5 with {\arrow{<}}}]
\draw [postaction={decorate}](0,0)  arc  (0:-180:0.5);
\end{scope}
\node at (-0.3,0) {\small $X$};
\end{tikzpicture}
~,~~
\dim_r(X) = 
\begin{tikzpicture}[baseline=-.2em,very thick]
\begin{scope}[decoration={markings,mark=at position 0.5 with {\arrow{<}}}]
\draw [postaction={decorate}](0,0)  arc  (0:180:0.5);
\end{scope}
\begin{scope}[decoration={markings,mark=at position 0.5 with {\arrow{>}}}]
\draw [postaction={decorate}](0,0)  arc  (0:-180:0.5);
\end{scope}
\node at (-0.7,0) {\small $X$};
\end{tikzpicture}
\ .
\ee

As in \cite{Kirillov:2011mk}, the pivotal structure gives isomorphisms $\Cc(\one,V_1 \otimes \cdots \otimes V_n) \to \Cc(\one,V_n \otimes V_1 \otimes \cdots \otimes V_{n-1})$ whose $n$-fold composition is the identity on $\Cc(\one,V_1 \otimes \cdots \otimes V_n)$. We will, however, not use this cyclic structure here (see Footnote~\ref{fn:differences} below).

\section{String-net construction}\label{sec:stringnet}

String-net spaces were introduced in \cite{Levin:2004mi} and were shown in \cite{Kadar:2009fs,Koenig:2010,Kirillov:2011mk} to be equal to state spaces of Turaev-Viro TQFTs. These references assume the underlying pivotal fusion category to be spherical. In this section we review the string-net construction following \cite[Sec.\,2\,\&\,3]{Kirillov:2011mk}, but without assuming sphericality.

\medskip

Let $\Cc$ be a pivotal fusion category over $\Bbbk$. To simplify notation, we will assume in addition that $\Cc$ is strict. By a {\em marked surface} we mean a tuple $\BSig = (\Sigma,B,V,\nu)$, where
\begin{itemize}
\item
$\Sigma$ is an oriented surface, possibly with non-empty boundary, and possibly non-compact.
\item
$B \subset \partial \Sigma$ is a finite subset, possibly empty, of the boundary of $\Sigma$.
\item
$V : B \to \Cc$, $b \mapsto V_b$ is a function that assigns an object of $\Cc$ to every point in $B$, and $\nu : B \to \{\pm 1\}$, $b \mapsto \nu_b$, assigns a sign.
\end{itemize}

Let $\Sigma$ be an oriented surface and let $\Gamma$ be a finite graph with oriented edges embedded in $\Sigma$.
Write $E(\Gamma)$ for the set of edges of $\Gamma$ and $V(\Gamma)$ for its set of vertices. We denote by $V_\circ(\Gamma)$ the subset of vertices which lie in the interior of $\Sigma$.
For an edge $e \in E(\Gamma)$ with non-empty boundary (i.e.\ which is not a loop), we write $\partial_-e \in V(\Gamma)$ for its boundary vertex in the direction of the orientation of $e$, and $\partial_+e \in V(\Gamma)$ for the boundary vertex in the opposite direction, see Figure~\ref{fig:graph-conventions}\,(a). If an edge starts and ends on the same vertex, we have $\partial_+e = \partial_-e$. 
For a vertex $v \in V_\circ(\Gamma)$, set
\be\label{eq:E(v)-def}
\Ec(v) = \big\{ \, (e,\nu) \, \big|\, e \in E(\Gamma) , \nu \in \{ \pm 1 \} \text{ such that } \partial_\nu e = v \,\big\} \ .	
\ee
One can think of $\Ec(v)$ as the set of half-edges attached to $v$.
The orientation of $\Sigma$ equips $\Ec(v)$ with a cyclic ordering obtained by passing clockwise around the vertex, see Figure~\ref{fig:graph-conventions}\,(b). 

\begin{figure}[tb]
\begin{center}
a)~~
\begin{tikzpicture}[baseline=8em]
\begin{scope}[very thick,blue!80!black,decoration={markings,mark=at position 0.5 with {\arrow{>}}}]
\draw[postaction={decorate}] (0,0) -- (0,2);
\draw (0,2) -- (0.5,2.5);
\draw[dashed] (0.5,2.5) -- (1,3);
\draw (0,2) -- (-0.5,2.5);
\draw[dashed] (-0.5,2.5) -- (-1,3);
\draw (0,0) -- (0.5,-0.5);
\draw[dashed] (0.5,-0.5) -- (1,-1);
\draw (0,0) -- (-0.5,-0.5);
\draw[dashed] (-0.5,-0.5) -- (-1,-1);
\end{scope}
\draw[very thick,blue!80!black,fill=blue!80!black] (0,0) circle (0.1);
\draw[very thick,blue!80!black,fill=blue!80!black] (0,2) circle (0.1);

\node at (0.3,1) {\small $e$};
\node at (0.5,1.8) {\small $\partial_- e$};
\node at (0.5,0.2) {\small $\partial_+ e$};
\end{tikzpicture}
\hspace{3em}
b)~
\begin{tikzpicture}[baseline=11em]
\begin{scope}[very thick,orange!80!black]
\draw (0,0) -- (4,0) -- (4,4.5) -- (0,4.5) -- (0,0);
\draw (0.6,-0.2) -- (0.8,0.2);
\draw (0.7,-0.2) -- (0.9,0.2);
\draw (0.6, 4.3) -- (0.8,4.7);
\draw (0.7, 4.3) -- (0.9,4.7);
\draw (-0.2,3.1) -- (0.2,3.3);
\draw (3.8,3.1) -- (4.2,3.3);
\end{scope}
\draw[very thick,blue!80!black,fill=blue!80!black] (1.5,1.5) circle (0.1);

\draw [line width=4pt,gray!60!white] (3,3.5) ellipse (0.4 and 0.4);
\draw [very thick,black] (3,3.5) ellipse (0.5 and 0.5);

\begin{scope}[very thick,blue!80!black,decoration={markings,mark=at position 0.5 with {\arrow{>}}}]
\draw[postaction={decorate}] (1.5,1.5) -- (4,1.5);
\draw[postaction={decorate}] (1.5,0) -- (1.5,1.5);
\draw[postaction={decorate}] (1.5,1.5) -- (1.5,4.5); 
\draw[postaction={decorate}] (0,1.5) -- (1.5,1.5);
\end{scope}
\begin{scope}[very thick,blue!80!black,decoration={markings,mark=at position 0.5 with {\arrow{<}}}]
\draw[postaction={decorate}] (1.5,1.5) -- (2.7,3.1);
\end{scope}
\draw[very thick,blue!80!black,fill=blue!80!black] (2.7,3.1) circle (0.1);

\node at (0.5,1.2) {\small $f_2$};
\node at (3,1.2) {\small $f_2$};
\node at (1.8,3.3) {\small $f_3$};
\node at (1.8,0.5) {\small $f_3$};
\node at (2.5,2.3) {\small $f_1$};
\node at (1.3,1.7) {\small $v$};
\node at (2.4,3) {\small $p$};
\end{tikzpicture}
\hspace{3em}
c)~
\begin{tikzpicture}[baseline=11em]
\begin{scope}[very thick,orange!80!black]
\draw (0,0) -- (4,0) -- (4,4.5) -- (0,4.5) -- (0,0);
\draw (0.6,-0.2) -- (0.8,0.2);
\draw (0.7,-0.2) -- (0.9,0.2);
\draw (0.6, 4.3) -- (0.8,4.7);
\draw (0.7, 4.3) -- (0.9,4.7);
\draw (-0.2,3.1) -- (0.2,3.3);
\draw (3.8,3.1) -- (4.2,3.3);
\end{scope}
\draw[very thick,blue!80!black,fill=blue!80!black] (1.5,1.5) circle (0.1);
\draw[very thick,blue!80!black, dashed] (1.5,1.5) -- (1.7,2.2);

\draw [line width=4pt,gray!60!white] (3,3.5) ellipse (0.4 and 0.4);
\draw [very thick,black] (3,3.5) ellipse (0.5 and 0.5);

\begin{scope}[very thick,blue!80!black,decoration={markings,mark=at position 0.5 with {\arrow{>}}}]
\draw[postaction={decorate}] (1.5,1.5) -- (4,1.5);
\draw[postaction={decorate}] (1.5,0) -- (1.5,1.5);
\draw[postaction={decorate}] (1.5,1.5) -- (1.5,4.5); 
\draw[postaction={decorate}] (0,1.5) -- (1.5,1.5);
\end{scope}
\begin{scope}[very thick,blue!80!black,decoration={markings,mark=at position 0.5 with {\arrow{<}}}]
\draw[postaction={decorate}] (1.5,1.5) -- (2.7,3.1);
\end{scope}
\draw[very thick,blue!80!black,fill=blue!80!black] (2.7,3.1) circle (0.1);

\node at (0.3,1.2) {\small $X$};
\node at (3,1.2) {\small $X$};
\node at (1.7,3.3) {\small $Y$};
\node at (1.7,0.4) {\small $Y$};
\node at (2.1,2.6) {\small $U$};
\node at (1.3,1.7) {\small $\varphi$};
\node at (3.1,2.7) {\small $(U,-)$};
\end{tikzpicture}
\end{center}

\caption{{\bf a)} The two boundary vertices of an edge.\\ 
{\bf b)} Example of a graph embedded in a torus with one boundary component. The boundary contains a marked point $p$. We have $\Ec(v) = \{ (f_1,-), (f_2,+), (f_3,-), (f_2,-), (f_3,+) \}$, where the elements are already listed in the cyclic order.\\ 
{\bf c)} Example of a coloured graph. The dashed line at the vertex $v$ separates the first and last edge of the total order of half-edges around the vertex. In this example, the total order around $v$ is such that $V_{f_1} = U$, $\nu_1 = -$, $V_{f_2}=X$, $\nu_2 = +$, etc. Furthermore
$V_p = U$, $\nu_p = -$, and $\varphi$ is a morphism in $\Cc(\one, U^\vee \otimes X  \otimes Y^\vee  \otimes X^\vee  \otimes Y)$.}
\label{fig:graph-conventions}
\end{figure}
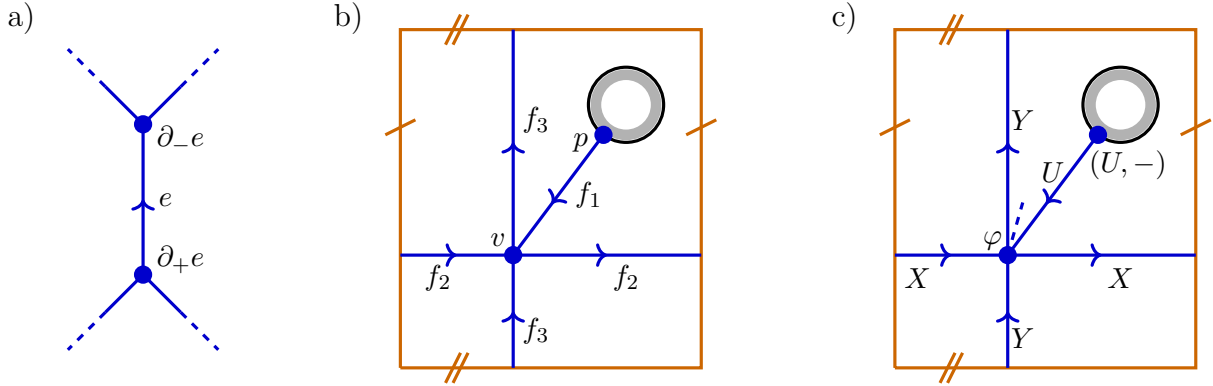

With these preparations, we can define a {\em coloured graph embedded in a marked surface} $\BSig = (\Sigma,B,V,\nu)$ to be the following collection of data:
\begin{itemize}
\item
A finite graph $\Gamma$ with with oriented edges embedded in $\Sigma$, such that $\Gamma \cap \partial\Sigma = B$.
We require that each $b \in B$ is a one-valent vertex of $\Gamma$, and that the (unique) edge containing $b$ is oriented towards $b$ if $\nu_b=+1$ and away from $b$ if $\nu_b=-1$.\footnote{
	This convention is chosen opposite to \eqref{eq:E(v)-def} and avoids extra duals in the map \eqref{eq:<>Q-def} below.}

\item
For each $v \in V_\circ(\Gamma)$ a choice of total order of $\Ec(v)$, compatible with the cyclic order described above. We can thus write
\be
\Ec(v) = \{ (e_1,\nu_1), \dots , (e_N,\nu_N) \}	
\ee
with $N = |\Ec(v)|$.

\item 
A map $E(\Gamma) \to \Cc$, $e \mapsto V_e$, assigning an object of $\Cc$ to each edge of $\Gamma$. If $b \in B$ is a boundary point of $e$, we require that $V_e = V_b$.

\item
For each interior vertex $v \in V_\circ(\Gamma)$ a choice of morphism
\be
	\varphi_v \in \Cc(\one, V_{e_1}^{\nu_1} \otimes \cdots \otimes V_{e_N}^{\nu_N}) \ ,
\ee
where $N = |\Ec(v)|$ and for $X \in \Cc$ we set $X^+ = X$ and $X^- = X^\vee$. 
\end{itemize}
See Figure~\ref{fig:graph-conventions}\,(c) for an illustration.
We write $\Graph(\BSig)$ for the set of all coloured graphs embedded in $\BSig$ and denote the $\Bbbk$-vector space it spans freely by
\be
	\VGraph(\BSig) := \mathrm{span}_{\Bbbk} \Graph(\BSig) \ .
\ee

\begin{figure}[tb]
\begin{center}
a)~~
\begin{tikzpicture}[very thick,baseline=13em]
\draw (0,0) -- (0,5) -- (5,5) -- (5,0) -- (0,0);

\coordinate (v2) at (0.9,5);
\coordinate (v3) at (1.7,5) circle (0.1);
\coordinate (v4) at (2.5,5) circle (0.1);
\coordinate (v7) at (3.3,5) circle (0.1);
\coordinate (v6) at (4.1,5) circle (0.1);
\coordinate (v1) at (2,2) circle (0.1);
\coordinate (v5) at (3.5,3) circle (0.1);

\node at (v2) [above] {\small $b_1$};
\node at (v3) [above] {\small $b_2$};
\node at (v4) [above] {\small $b_3$};
\node at (v7) [above] {\small $b_4$};
\node at (v6) [above] {\small $b_5$};
\node at (v1) [left] {\small $v_1$};
\node at (v5) [right] {\small $v_2$};

\draw[very thick,blue!80!black,fill=blue!80!black] (v2) circle (0.1);
\draw[very thick,blue!80!black,fill=blue!80!black] (v3) circle (0.1);
\draw[very thick,blue!80!black,fill=blue!80!black] (v4) circle (0.1);
\draw[very thick,blue!80!black,fill=blue!80!black] (v7) circle (0.1);
\draw[very thick,blue!80!black,fill=blue!80!black] (v6) circle (0.1);

\draw[very thick,blue!80!black,fill=blue!80!black] (v1) circle (0.1);
\draw[very thick,blue!80!black,fill=blue!80!black] (v5) circle (0.1);

\begin{scope}[very thick,blue!80!black,decoration={markings,mark=at position 0.6 with {\arrow{>}}}]
\draw [postaction={decorate}] (1,1) ellipse (0.5 and 0.5);
\draw[postaction={decorate}] (2,2) -- node[above,black] {\small $f_5$} (v5);
\draw[postaction={decorate}]  plot[smooth, tension=.7] coordinates {(v1) (1,3) (v2)};
\draw[postaction={decorate}]  plot[smooth, tension=1.9] coordinates {(v4) (2,3.5) (v3)};
\draw[postaction={decorate}]  plot[smooth, tension=.7] coordinates {(v1) (3.2063,2.0382) (v5)};
\draw[postaction={decorate}]  plot[smooth, tension=1.1] coordinates {(v1) (2.8,1) (4,2.2) (v6)};
\draw[postaction={decorate}] (v7) -- node[right,black] {\small $f_3$} (v5);
\draw[dashed] (v1) -- ++(-30:0.7);
\draw[dashed] (v5) -- ++(-120:0.7);
\end{scope}

\draw[very thick,blue!80!black,fill=blue!80!black] (2,2) node (v1) {} circle (0.1);
\draw[very thick,blue!80!black,fill=blue!80!black] (3.5,3) node (v5) {} circle (0.1);

\node at (1.75,1) {\small $f_7$};
\node at (0.6,3.8) {\small $f_1$};
\node at (1.5,4.1) {\small $f_2$};
\node at (4.4,2.7) {\small $f_4$};
\node at (3.0,1.7) {\small $f_6$};
\end{tikzpicture}
\hspace{4em}
b)
\begin{tikzpicture}[baseline=13.5em]
\coordinate (v1) at (1,5);
\coordinate (v2) at (2,5);
\coordinate (v3) at (3,5);
\coordinate (v4) at (4,5);
\coordinate (v5) at (5,5);

\node at (v1) [above] {\small $V_1$};
\node at (v2) [above] {\small $V_2$};
\node at (v3) [above] {\small $V_2^\vee$};
\node at (v4) [above] {\small $V_3^\vee$};
\node at (v5) [above] {\small $V_4$};

\draw[very thick] (1,1.5) rectangle ++(1.5,0.6) node[pos=.5] {\small $\varphi_{v_1}$};
\draw[very thick] (3.2,3) rectangle ++(1.2,0.6) node[pos=.5] {\small $\varphi_{v_2}$};

\draw[very thick] (v1) .. controls (1,4) and (1.4,3) .. (1.6,2.1);
\begin{scope}[very thick,decoration={markings,mark=at position 0.51 with {\arrow{<}}}]
\draw[postaction={decorate}] (v2) .. controls (1.9761,3.746) and (2.9938,3.7637) .. (v3);
\end{scope}
\draw[very thick] (3.8,3.6) .. controls (3.7637,4.2947) and (3.9938,4.3035) .. (v4);

\begin{scope}[very thick,decoration={markings,mark=at position 0.8 with {\arrow{<}}}]
\draw [postaction={decorate}] (v5) .. controls (5,4) and (5.2327,1.9673) .. (4.5248,1.454) .. controls (3.631,0.8522) and (1.631,0.8364) .. (0.8,1.1);
\end{scope}

\begin{scope}[very thick,decoration={markings,mark=at position 0.75 with {\arrow{<}}}]
\draw [postaction={decorate}]  (0.8,1.1) .. controls (0.1832,1.4363) and (1.1656,3.3902) .. (1.3,2.1);
\end{scope}

\begin{scope}[very thick,decoration={markings,mark=at position 0.2 with {\arrow{<}}}]
\draw[postaction={decorate}] (3.5,3.6) .. controls (3.1177,4.7283) and (1.8558,2.9832) .. (1.9,2.1);
\end{scope}

\begin{scope}[very thick,decoration={markings,mark=at position 0.13 with {\arrow{<}}}]
\draw[postaction={decorate}] (4.1,3.6) .. controls (4.2,4.2) and (4.7647,4.4566) .. (4.7894,3.0036) .. controls (4.7894,2.1637) and (2.3,2.6) .. (2.2,2.1);
\end{scope}

\begin{scope}[very thick,decoration={markings,mark=at position 0.5 with {\arrow{>}}}]
\draw[postaction={decorate}](0.8,0.5)  arc  (0:180:0.5);
\end{scope}

\begin{scope}[very thick,decoration={markings,mark=at position 0.5 with {\arrow{<}}}]
\draw [postaction={decorate}](0.8,0.5)  arc  (0:-180:0.5);
\end{scope}

\node at (2.6,3.1) {\small $V_5$};
\node at (3.5,2.1) {\small $V_6$};
\node at (1.05,0.5) {\small $V_7$};
\end{tikzpicture}
\end{center}

	\caption{a) Coloured graph in $\mathbf{Q}$. The objects $V_1,\dots,V_7 \in \Cc$ are assigned to edges such that $V_i = V_{f_i}$. For the boundary points this implies for example that $V_{b_3}=V_2$ ($\nu_3=-$) and $V_{b_4} = V_3$ ($\nu_4=-$).
	b) The corresponding string diagram describing a morphism $\one \to V_1 \otimes V_2 \otimes V_2^\vee \otimes V_3^\vee \otimes V_4$ in $\Cc$.}
	\label{fig:Qgraph-stringdiag}
\end{figure}
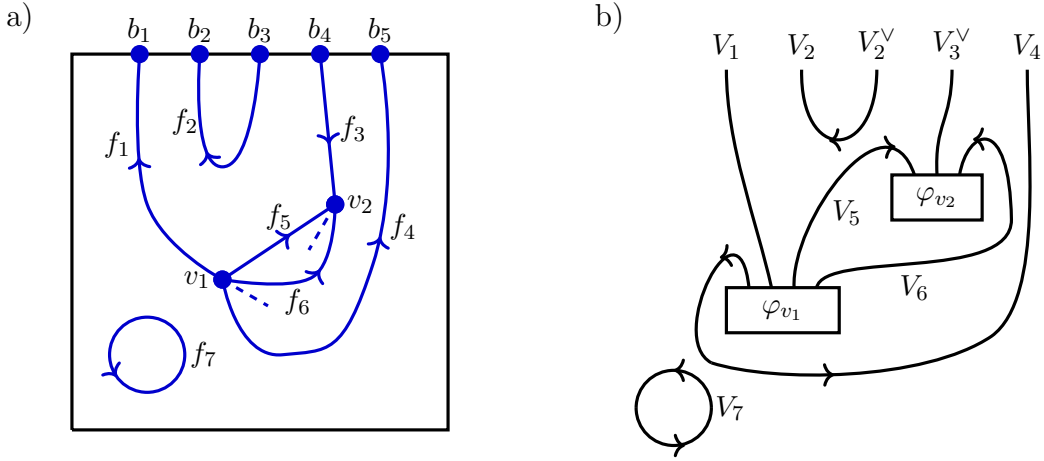

Let $\mathbf{Q} = (Q,B,V,\nu)$ be the marked surface given by the unit square $Q := [0,1]^2$ such that all marked points are contained in the interior of the top horizontal part of the boundary, $B \subset (0,1) \times \{1\}$. The points in $B$ are ordered by their $x$-coordinate, so that we can write $B = \{ b_1,\dots,b_N \}$. We can turn a coloured graph in $Q$ into a string diagram by replacing all interior vertices by the corresponding coupons, see Figure~\ref{fig:Qgraph-stringdiag} for an illustration. This string diagram evaluates to a morphism in $\Cc$ (see e.g.~\cite[Sec.\,2]{Turaev-Virelizier-book}). After linear extension we obtain the map
\be\label{eq:<>Q-def}
	\langle - \rangle_{\mathbf{Q}} : \VGraph(\mathbf{Q}) \longrightarrow \Cc(\one, V_{b_1}^{\nu_1} \otimes \cdots \otimes V_{b_N}^{\nu_N}) \ ,
\ee
By construction, for $\varphi_v \in \Cc(\one, V_{b_1}^{\nu_1} \otimes \cdots \otimes V_{b_N}^{\nu_N})$ we have
\be
	\langle \Gamma \rangle_{\mathbf{Q}} = \varphi_v 
\qquad
\text{where} \qquad
\Gamma
=~
\begin{tikzpicture}[baseline=1.2em]
\coordinate (p) at (1.4,0.3);
\coordinate (b1) at (0,2);
\coordinate (b2) at (0.8,2);
\coordinate (dot) at (1.7,2);
\coordinate (dot2) at (1.6,1.5);
\coordinate (bN) at (2.5,2);
\coordinate (v3) at (3,5);
\coordinate (v4) at (4,5);
\coordinate (v5) at (5,5);

\draw[very thick] (-0.4,-1) rectangle (2.9,2);

\draw[very thick,blue!80!black] (p) -- (b1);
\draw[very thick,blue!80!black] (p) -- (b2);
\draw[very thick,blue!80!black] (p) -- (bN);

\draw[very thick,blue!80!black,fill=blue!80!black] (p) circle (0.1);
\draw[very thick,blue!80!black,dashed] (p) -- ++(-90:0.7);
\node[right] at (p) {\small $\varphi_v$};

\draw[very thick,blue!80!black,fill=blue!80!black] (b1) circle (0.1);
\node[above] at (b1) {\small $V_1^{\nu_1}$};

\draw[very thick,blue!80!black,fill=blue!80!black] (b2) circle (0.1);
\node[above] at (b2) {\small $V_2^{\nu_2}$};
\node[above] at (dot) {\small $\cdots$};
\node at (dot2) {\small $\cdots$};

\draw[very thick,blue!80!black,fill=blue!80!black] (bN) circle (0.1);
\node[above] at (bN) {\small $V_N^{\nu_N}$};

\end{tikzpicture}	
\qquad .
\ee

Let $\BSig$ be a marked surface.
An element $\mathbf{\Gamma} = c_1 \Gamma_1 + \cdots + c_n \Gamma_n \in \VGraph(\BSig)$, $c_i \in \Bbbk$, is called a {\em null graph for $\BSig$} if there exists an embedding $f : Q \to \Sigma$ such that the following properties hold:
\begin{itemize}
	\item The coloured graphs $\Gamma_i$, $i=1,\dots,n$ are equal in the complement of $f(Q)$ in $\Sigma$.

	\item Each graph $\Gamma_i$ intersects the boundary $f(\partial Q)$ transversally (in particular there is no vertex on $f(\partial Q)$), 
	and there is a finite subset $B = \{b_1,\dots,b_n\}$ of the interior of the upper edge $(0,1) \times \{1\} \subset Q$ such that each $i$ we have $\Gamma_i \cap f(\partial Q) = f(B)$.
	
	\item 
	By assumption the label $V_e$ and the orientation of an edge $e$ intersecting $x \in f(B)$ is the same for all $\Gamma_i$. This allows one to define the maps $V : B \to \Cc$ and $\nu : B \to \{\pm 1\}$ to turn $Q$ into a marked surface $\mathbf{Q} = (Q,B,V,\nu)$.

	We can now pull back the restriction of $\Gamma_i$ to $f(Q)$ along $f$ to obtain an element $f^{-1}(\Gamma_i) \in \Graph(\mathbf{Q})$. We have $\langle f^{-1}(\Gamma_i) \rangle_{\mathbf{Q}} \in \Cc(\one, V_{b_1}^{\nu_1} \otimes \cdots \otimes V_{b_n}^{\nu_n})$ for all $i$. For $\mathbf{\Gamma}$ to be a null graph we require that
\be
	\sum_{i=1}^n c_i\,\langle f^{-1}(\Gamma_i) \rangle_{\mathbf{Q}}  \,=\, 0 \ .
\ee	
\end{itemize}
Let $N(\BSig)$ be the sub-vector space of $\VGraph(\BSig)$ spanned by all null graphs for $\BSig$. We define:\footnote{\label{fn:differences}
	The construction of string-net spaces presented here closely follows that in \cite[Sec.\,2\,\&\,3]{Kirillov:2011mk}. 
	One difference is that we do not require $\Cc$ to be spherical, but this does not affect the definition of $\SN(\BSig)$. 
	Another difference is that we do not make use of the cyclic symmetry of 
	$\Cc(\one,V_1 \otimes \cdots \otimes V_n)$ 	mentioned in the end of Section~\ref{sec:pivotal} and instead use a total order on $\Ec(v)$. Otherwise one would e.g.\ be forced to use cyclically invariant states if all objects and orientations at a vertex coincide, which may be a proper subspace of $\Cc(\one,V \otimes \cdots \otimes V)$, see e.g.\ \cite{Lin:2014aca}.
	}

\begin{definition}
The {\em string-net space} $\SN(\BSig)$ of a marked surface $\BSig$ is the quotient vector space
\be
\SN(\BSig) = \VGraph(\BSig) / N(\BSig) \ .
\ee
If we want to emphasise the dependence on the pivotal fusion category $\Cc$, we write $\SN(\BSig,\Cc)$.
\end{definition}

One verifies that the map $\langle - \rangle_{\mathbf{Q}}$ from \eqref{eq:<>Q-def} descends to a linear map $\SN(\mathbf{Q}) \to  \Cc(\one, V_{b_1}^{\nu_1} \otimes \cdots \otimes V_{b_N}^{\nu_N})$, which then by construction must be an isomorphism.

\begin{remark}\label{rem:mcg-action}
Let $\BSig$ and $\BSig'$ be two marked surfaces, and let $\phi : \Sigma \to \Sigma'$ be an isomorphism of the underlying surfaces that is compatible with the marking (i.e.\ $\phi(B) = B'$, etc.). We say that $\phi : \BSig \to \BSig'$ is an {\em isomorphism of marked surfaces}. Such a $\phi$ induces an isomorphism $\Graph(\BSig) \to \Graph(\BSig')$ which maps a coloured graph $\Gamma$ to $\phi(\Gamma)$. The linear extension of this map takes $N(\BSig)$ to $N(\BSig')$ and so induces an isomorphism
\be
	\SN(\phi) : \SN(\BSig) \to \SN(\BSig') \ .
\ee
In this way $\SN(-)$ becomes a functor from marked surfaces and isomorphisms to $\Bbbk$-vector spaces. Denote by $\MCG(\BSig)$ the mapping class group of $\Sigma$ which preserves the boundary of $\Sigma$ pointwise. Then $\SN(-)$ induces a group homomorphism
\be
	\MCG(\BSig) \longrightarrow GL(\SN(\BSig)) \ ,
\ee
i.e.\ one obtains a linear representation of $\MCG(\BSig)$ on $\SN(\BSig)$ (and not just a projective representation).
\end{remark}

\section{Computing string-net spaces}\label{sec:string-compute}

In this section we recall from \cite{Kirillov:2011mk} how to relate the string-net spaces of punctured and unpunctured surfaces. 
As before, $\Cc$ is a strict pivotal fusion category over $\Bbbk$. 

\medskip

The string-net space of a punctured surface is determined in the same way as in the case of spherical categories. It is helpful to describe the construction using the central monad, so we recall its definition first, see \cite{Day:2007} and also \cite{Bruguieres:2008vz}, \cite[Sec.\,3.2]{Shimizu:1402sp}. The central monad is an endofunctor of $\Cc$, which on an object $X \in \Cc$ is defined as the coend
\be
	A(X) = \int^{V \in \Cc} \hspace{-1em} V^\vee \otimes X \otimes V
	\quad , \quad
	\iota(X)_V : V^\vee \otimes X \otimes V \to A(X) \ .
\ee
By definition, the dinatural transformation $\iota(X)$ satisfies the following commuting diagram for every $f : V \to W$ in $\Cc$:
\be
	\begin{tikzcd}
	W^\vee \otimes X \otimes V \ar{r}{\id \otimes f} \ar{d}{f^\vee \otimes \id} & 
	W^\vee \otimes X \otimes W \ar{d}{\iota(X)_W} 
	\\
	V^\vee \otimes X \otimes V \ar{r}{\iota(X)_V} &
	A(X)
	\end{tikzcd}
\ee
Furthermore, the pair $(A(X),\iota(X))$ is universal with this property. That is, given another family of maps $\phi_V : V^\vee \otimes X \otimes V \to Z$ for some $Z$ which satisfies the corresponding commuting diagram, there exists a unique map $f : A(X) \to Z$ such that $\phi_V = f \circ \iota(X)_V$.

In our situation, $\Cc$ is fusion, and we can write $A(X)$ and $\iota(X)$ explicitly as
\begin{align}
A(X) &= \bigoplus_{U \in \Ic} U^\vee \otimes X \otimes U
\ , 
\nonumber\\
\iota(X)_V &= \sum_{U \in \Ic} \sum_\alpha \big[ V^\vee \otimes X \otimes V
\xrightarrow{{\bar\alpha}^\vee \otimes \id \otimes \alpha} U^\vee \otimes X \otimes U \big] \ ,
\label{eq:A(X)-via-simples}
\end{align}
where the $\alpha$ are basis vectors of $\Cc(V,U)$ and $\bar\alpha$ denotes elements of the dual basis of $\Cc(U,V)$ in the sense that $\alpha \circ \bar\beta = \delta_{\alpha,\beta} \, \id_U$. One verifies that $\iota(X)_V$ is independent of the choice of basis.

The qualifier ``central'' in central monad derives from the property that there is a natural lift of $A$ to the Drinfeld centre of $\Cc$: there is a functor $\hat A:\Cc \to \Zc(\Cc)$ such that $A = \forget \circ \hat A$, 
where $\forget : \Zc(\Cc) \to \Cc$ forgets the half-braiding. This amounts to equipping each $A(X)$ with a half-braiding, see \eqref{eq:Ahat-halfbraid} for an explicit expression. The lift $\hat A$ is important because it is left adjoint to the forgetful functor, 
\be\label{eq:forget-hatA-adjunction}
	\Zc(\Cc)(\hat A(X),Z)
	~\cong~ \Cc( X,\forget(Z) ) \ .
\ee

Apart from $A(X)$ we will need another coend. For brevity, 
here and below we will sometimes omit $\otimes$ between objects in expressions with several tensor factors. We define
\be
H = \int^{X,Y \in \Cc} \hspace{-2em} X^\vee Y^\vee X Y
= \int^{X \in \Cc} \hspace{-1em} X^\vee A(X) 
\quad , 
\quad
\jmath_{X,Y} :  X^\vee Y^\vee X Y \longrightarrow H \ .
\ee
The explicit expression in terms of simple objects reads
\be\label{eq:H-via-simples}
H = \bigoplus_{S,T \in \Ic} S^\vee T^\vee S  T
\quad , 
\quad
\jmath_{X,Y} = \sum_{S,T \in \Ic} \sum_{\alpha,\beta} \big[
X^\vee Y^\vee X Y
\xrightarrow{{\bar\alpha}^\vee \otimes{\bar\beta}^\vee \otimes \alpha \otimes \beta} S^\vee T^\vee S  T \big] \ ,
\ee
where as above, $\alpha,\beta$ are basis vectors, this time of $\Cc(X,S)$ and $\Cc(Y,T)$, respectively.

\medskip

\begin{figure}[bt]
a)\hspace{-1.5em}
\begin{tikzpicture}[very thick,baseline=6em,scale=1.2]
\node at (0,0) {\includegraphics[width=18em]{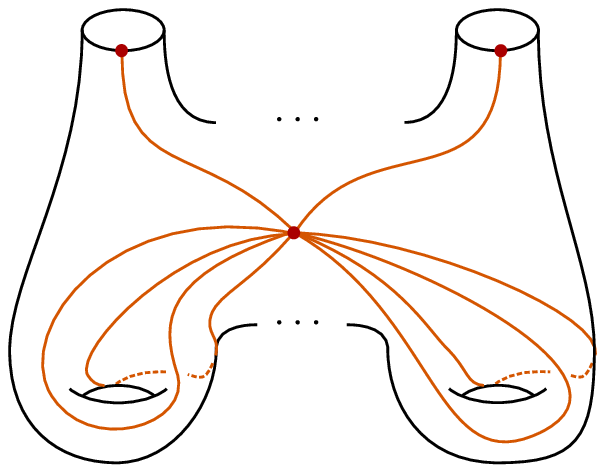}};
\node at (2.1,2.15) {\small $u_1$};
\node at (2,0.9) {\small $e_1$};
\node at (-1.8,2.15) {\small $u_b$};
\node at (-1.8,0.9) {\small $e_b$};
\node at (-2.4,-0.2) {\small $f_1$};
\node at (-0.4,-0.6) {\small $f_1'$};
\node at (0.4,-0.6) {\small $f_g$};
\node at (2.4,-0.3) {\small $f_g'$};
\node at (-0.05,0.45) {\small $p$};
\end{tikzpicture}
\hspace{1em}
b)
\begin{tikzpicture}[very thick,baseline=6em,scale=1.2]
\node at (0,0) {\includegraphics[width=18em]{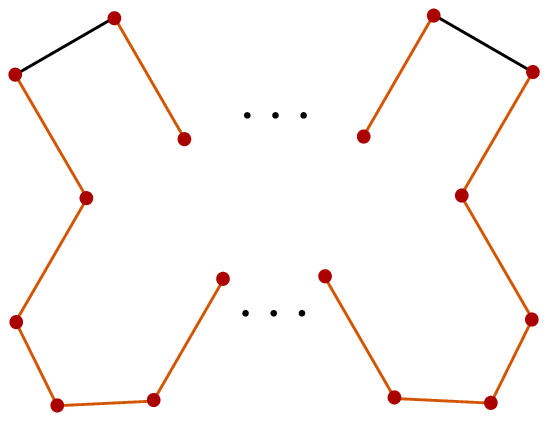}};
\node at (-1.9,2.5) {\small $u_1$};
\node at (-3.0,1.85) {\small $u_1$};
\node at (1.9,2.5) {\small $u_b$};
\node at (3.0,1.85) {\small $u_b$};
\node at (-2.9,0.75) {\small $e_{1,1}$};
\node at (-2.2,1.7) {\small $e_{1,2}$};
\node at (-1.05,1.5) {\small $e_{1,3}$};
\node at (2.9,0.75) {\small $e_{b,3}$};
\node at (2.2,1.75) {\small $e_{b,2}$};
\node at (1.05,1.5) {\small $e_{b,1}$};
\node at (-3.0,-0.5) {\small $f_{g,4}$};
\node at (-3.1,-1.8) {\small $f_{g,3}$};
\node at (-1.9,-2) {\small $f_{g,2}$};
\node at (-0.75,-1.7) {\small $f_{g,1}$};
\node at (3.0,-0.5) {\small $f_{1,1}$};
\node at (3.1,-1.8) {\small $f_{1,2}$};
\node at (1.9,-2) {\small $f_{1,3}$};
\node at (0.75,-1.7) {\small $f_{1,4}$};
\node at (-1.1,0.55) {\small $p$};
\node at (-2,0.1) {\small $p$};
\node at (-2.75,-1.3) {\small $p$};
\node at (-2.75,-2.35) {\small $p$};
\node at (-1.15,-2.3) {\small $p$};
\node at (-0.6,-0.5) {\small $p$};
\node at (1.05,0.55) {\small $p$};
\node at (1.95,0.1) {\small $p$};
\node at (2.8,-1.3) {\small $p$};
\node at (2.75,-2.35) {\small $p$};
\node at (1.15,-2.3) {\small $p$};
\node at (0.6,-0.5) {\small $p$};
\end{tikzpicture}

\caption{a) Genus $g$ surface $\Sigma$ with $b$ boundary components together with a selection of arcs starting on $p$ and ending either on $p$ or on one of the $u_i$, referred to as ``cutting arcs''; b) the polygon $P$ obtained after cutting along the arcs in a). Note that in order to retain paper-plane orientation, the figure was rotated back to front. To obtain a) from b) one glues e.g.\ $e_{1,1}$ and $e_{1,3}$ in b), resulting in edge $e_1$ in a), $f_{1,1}$ and $f_{1,3}$ in b) to obtain $f_1$ in a), and $f_{1,2}$ and $f_{1,4}$ to obtain $f_1'$.}
\label{fig:surface-polygon}
\end{figure}

After these preparations, we can describe string-net spaces for punctured surfaces.
Let $\Sigma$ be a compact surface of genus $g$ with $b \ge 0$ boundary components. Pick a point $u_i$, $i = 1 ,\dots, b$ on each boundary component and a point $p$ in the interior of the surface. Choose arcs as shown in Figure~\ref{fig:surface-polygon}\,(a). We will refer to these as ``cutting arcs''. Let $\BSig = (\Sigma,B,V,\nu)$ be a marked surface such that none of the $u_i$ is contained in $B$.
 Write 
\be
	V_{(i)} = \bigotimes_x V_x^{\nu_x} \ ,
\ee
where the tensor product is over all $x \in B$ which lie on the same boundary component as $u_i$, with the tensor product taken in anti-clockwise order starting at $u_i$ (when the neighbourhood of the boundary component is realised as $\Rb^2$ with a disc removed, see Figure~\ref{fig:poygon-Q-embedding} for an example). 
Abbreviate
\be
	H(\BSig) = \Cc\Big(\,\one \,,\, \Big({\textstyle \bigotimes_{i=1}^b} \, A(V_{(i)}) \Big) \otimes H^{\otimes g} \, \Big) \ .
\ee
Let $G \subset \Graph(\BSig - \{p\})$ consist of coloured graphs whose edges cross the cutting arcs transversally, and for which no vertices lie on these arcs. 
Define the map
\be\label{eq:psi-vgraph}
\tilde\psi : G
\longrightarrow H(\BSig)
\ee
as follows. Let $\Gamma \in G$. Denote the polygon in Figure~\ref{fig:surface-polygon}\,(b) by $P$. For each edge $e$ of $P$, let $W_e$ be the tensor product of the objects (or their duals, depending on orientation) labelling the edges of the graph $\Gamma$ intersecting $e$, see Figure~\ref{fig:poygon-Q-embedding}\,(a). In particular, if $e$ is the $i$'th boundary component of $\BSig$, then $W_e = V_{(i)}$.
Pick an embedding $f : Q \to \Sigma$ as shown in Figure~\ref{fig:poygon-Q-embedding}\,(b). We can now pull back the part of $\Gamma$ that lies in $f(Q)$ to obtain a coloured graph in $\Graph(\mathbf{Q})$. Evaluating this graph gives a morphism
\be\label{eq:evaluate-graph-in-P}
\langle f^{-1}(\Gamma) \rangle_{\BQ}
: \one \to 
W_{e_{1,1}} \otimes \cdots \otimes W_{e_{b,3}} \otimes W_{f_{1,1}} \otimes \cdots \otimes W_{f_{g,4}} \ .
\ee

\begin{figure}[bt]
	
a)\hspace{-1.8em}
\begin{tikzpicture}[very thick,baseline=7em,scale=1.2]
\node at (0,0) {\includegraphics[width=18em]{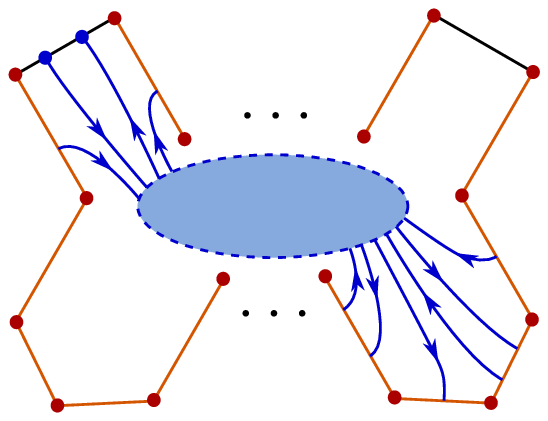}};
\node at (-2.9,0.75) {\small $e_{1,1}$};
\node at (-2.6,2.1) {\small $e_{1,2}$};
\node at (-1.05,1.5) {\small $e_{1,3}$};
\node at (2.9,0.75) {\small $e_{b,3}$};
\node at (2.6,2.1) {\small $e_{b,2}$};
\node at (1.05,1.5) {\small $e_{b,1}$};
\node at (-3.0,-0.5) {\small $f_{g,4}$};
\node at (-3.1,-1.8) {\small $f_{g,3}$};
\node at (-1.9,-2.5) {\small $f_{g,2}$};
\node at (-0.75,-1.7) {\small $f_{g,1}$};
\node at (3.0,-0.5) {\small $f_{1,1}$};
\node at (3.1,-1.8) {\small $f_{1,2}$};
\node at (1.9,-2.5) {\small $f_{1,3}$};
\node at (0.75,-1.7) {\small $f_{1,4}$};

\node at (0,0) {\small $\Gamma$};
\node at (-1.9,0.3) {\footnotesize $A$};
\node at (-2.6,1.4) {\footnotesize $B$};
\node at (-2,1.3) {\footnotesize $C$};
\node at (-1.25,1.0) {\footnotesize $A$};
\node at (2,-0.2) {\footnotesize $D$};
\node at (2.5,-1.1) {\footnotesize $E$};
\node at (2.35,-1.5) {\footnotesize $F$};
\node at (1.8,-1.9) {\footnotesize $D$};
\node at (1.35,-1.2) {\footnotesize $F$};
\node at (0.8,-0.7) {\footnotesize $E$};
\end{tikzpicture}
\hspace{0.5em}	
b)\hspace{-1.3em}
\begin{tikzpicture}[very thick,baseline=7em,scale=1.2]
\node at (0,0) {\includegraphics[width=18em]{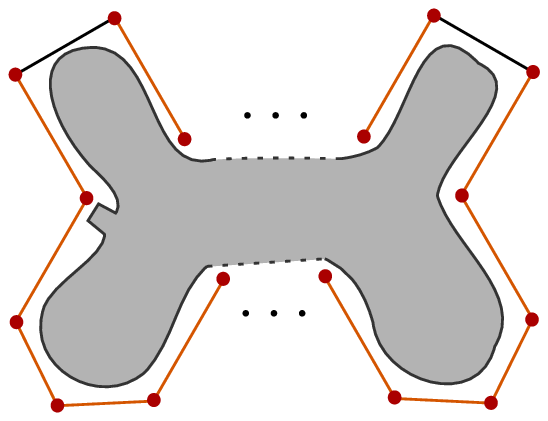}};
\node at (-2.9,0.75) {\small $e_{1,1}$};
\node at (-2.6,2.1) {\small $e_{1,2}$};
\node at (-1.05,1.5) {\small $e_{1,3}$};
\node at (2.9,0.75) {\small $e_{b,3}$};
\node at (2.6,2.1) {\small $e_{b,2}$};
\node at (1.05,1.5) {\small $e_{b,1}$};
\node at (-3.0,-0.5) {\small $f_{g,4}$};
\node at (-3.1,-1.8) {\small $f_{g,3}$};
\node at (-1.9,-2.5) {\small $f_{g,2}$};
\node at (-0.75,-1.7) {\small $f_{g,1}$};
\node at (3.0,-0.5) {\small $f_{1,1}$};
\node at (3.1,-1.8) {\small $f_{1,2}$};
\node at (1.9,-2.5) {\small $f_{1,3}$};
\node at (0.75,-1.7) {\small $f_{1,4}$};
\end{tikzpicture}

	\caption{a) 
Example of a coloured graph on the surface $\Sigma$ in Figure~\ref{fig:surface-polygon}\,(a), drawn on the polygon $P$. The dashed shaded disc stands for any coloured graph. In this example, $V_{(1)} = B^\vee \otimes C$, $W_{f_{1,2}} = E \otimes F^\vee$, and $W_{f_{1,4}} = F \otimes E^\vee$. Note that $W_{f_{1,2}} \cong (W_{f_{1,4}})^\vee$.
\\
b) 
Example of an embedding $f : Q \to \Sigma$ with the image $f(Q)$ shown in grey. The embedding is such that the top edge of $Q$ passes near the edges $e_{1,1},e_{1,2},\dots,f_{g,4}$. It has to contain all inner vertices of the coloured graph and only the top edge of $Q$ is allowed to intersect edges of the coloured graph.
}
	\label{fig:poygon-Q-embedding}
\end{figure}

For the $i$'th boundary component of $\Sigma$, 
the polygon $P$ has three edges
$e_{i,1},e_{i,2},e_{i,3}$. 
To obtain $\Sigma$ from $P$ one needs to glue $e_{i,1}$ to $e_{i,3}$. We have $W_{e_{i,2}} = V_{(i)}$, and
there is a canonical isomorphism $W_{e_{i,1}} \xrightarrow{\sim} W_{e_{i,3}}^\vee$ (see Figure~\ref{fig:poygon-Q-embedding}\,(a) for an example). Define the morphism
\be
	E_i = \big[ W_{e_{i,1}} W_{e_{i,2}}  W_{e_{i,3}}  = W_{e_{i,1}} V_{(i)}  W_{e_{i,3}} \xrightarrow{\sim} 
	W_{e_{i,3}}^\vee  V_{(i)}  W_{e_{i,3}}
	\xrightarrow{\iota(V_{(i)})_{W_{e_{i,3}}}} A(V_{(i)}) \big] \ .
\ee
For the $j$'s handle of $\Sigma$, the polygon $P$ has four edges $f_{j,1},\dots,f_{j,4}$ and one obtains the corresponding handle in $\Sigma$ by glueing $f_{j,1}$ to $f_{j,3}$ and $f_{j,2}$ to $f_{j,4}$. We have canonical isomorphisms $W_{f_{j,1}} \xrightarrow{\sim} W_{f_{j,3}}^\vee$ and $W_{f_{j,2}} \xrightarrow{\sim} W_{f_{j,4}}^\vee$ and we define the morphisms
\be\label{eq:cut-handle-map}
	F_j = \big[ 
	W_{f_{j,1}}W_{f_{j,2}}W_{f_{j,3}} W_{f_{j,4}} 
	\xrightarrow{\sim}
	W_{f_{j,3}}^\vee W_{f_{i,4}}^\vee W_{f_{j,3}} W_{f_{j,4}} 	\xrightarrow{\jmath_{W_{f_{j,3}}, W_{f_{j,4}}}} H \big] \ .
\ee
Combining \eqref{eq:evaluate-graph-in-P}--\eqref{eq:cut-handle-map} gives a map $\tilde\psi : G \to H(\BSig)$ defined as
\be
	\tilde\psi(\Gamma) = 
	\big(E_1 \otimes \cdots \otimes E_b \otimes F_1 \otimes \cdots \otimes F_g\big)
	\circ
	\langle f^{-1}(\Gamma) \rangle_{\BQ} \ .
\ee

The first key ingredient to compute string-net spaces is the following result \cite{Kirillov:2011mk}.

\begin{theorem}\label{thm:punctured-to-Hom}
The linear extension of $\tilde\psi$ descends to an isomorphism 
\be
	\psi : \SN(\BSig - \{p\}) \longrightarrow H(\BSig) \ .
\ee
\end{theorem}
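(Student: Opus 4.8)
The plan is to show that $\tilde\psi$ is well-defined on null graphs, and then to exhibit a two-sided inverse by building an explicit graph from a morphism in $H(\BSig)$. First I would check that the linear extension of $\tilde\psi$ is independent of the auxiliary choices: the embedding $f : Q \to \Sigma$, the cutting arcs, and the positions at which the edges of $\Gamma$ cross the arcs. Any two such choices are related by an isotopy of $\Sigma$ fixing $p$ and the $u_i$, and by the defining local relations of $\SN$ (applied in discs containing the relevant crossings or vertices) the value of $\tilde\psi$ is unchanged; this uses the dinaturality of $\iota(V_{(i)})$ and $\jmath$ precisely to absorb the ambiguity in how an edge labelled $W$ is resolved through a cutting arc into the coend summand indexed by simples. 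Next, one must check $\tilde\psi$ kills null graphs: a null graph is supported in some embedded square $f'(Q)$, and either $f'(Q)$ is disjoint from the cutting arcs — in which case the relation is just a relation inside a single $\langle - \rangle_{\BQ}$-evaluation and vanishes because $\langle-\rangle_{\BQ}$ is well-defined — or $f'(Q)$ meets the arcs, in which case one first uses the isotopy-invariance just established to push the square off the arcs. This gives the descended map $\psi : \SN(\BSig - \{p\}) \to H(\BSig)$.

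To construct the inverse, I would go the other way: given $\Phi \in H(\BSig) = \Cc\big(\one, (\bigotimes_i A(V_{(i)})) \otimes H^{\otimes g}\big)$, expand $A(V_{(i)}) = \bigoplus_{U\in\Ic} U^\vee V_{(i)} U$ and $H = \bigoplus_{S,T\in\Ic} S^\vee T^\vee S T$ as in \eqref{eq:A(X)-via-simples} and \eqref{eq:H-via-simples}, so that $\Phi$ becomes a finite sum of morphisms $\one \to (\bigotimes_i U_i^\vee V_{(i)} U_i) \otimes \bigotimes_j (S_j^\vee T_j^\vee S_j T_j)$. Each such component is precisely a vertex datum $\varphi_v$ for a single interior vertex $v$ placed at (a point near) $p$, with half-edges labelled by the simple objects $U_i, S_j, T_j$ and the boundary objects; drawing these half-edges back across the cutting arcs of Figure~\ref{fig:surface-polygon}\,(a) — the $U_i$-edge looping around the $i$'th boundary circle, and the $S_j, T_j$-edges running through the $j$'th handle — produces a coloured graph on $\Sigma - \{p\}$, hence an element of $\VGraph(\BSig-\{p\})$. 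Linearity in the components defines a map $H(\BSig) \to \SN(\BSig-\{p\})$; one checks it lands in a well-defined class (different expansions of $A$ and $H$ differ by the basis-independence already noted for $\iota$ and $\jmath$), and that composing with $\psi$ in both orders gives the identity: $\psi$ followed by reconstruction brings an arbitrary graph into the ``standard form'' concentrated at $p$, which is the content of cutting $\Sigma$ along the arcs and evaluating in the polygon $P$, and this is an $\SN$-equality because the cut-and-reglue is realised by local relations in embedded discs.

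The main obstacle I expect is the well-definedness of $\psi$ at the cutting arcs — i.e.\ showing that moving an edge-crossing along an arc, or sliding a vertex across an arc, does not change $\tilde\psi(\Gamma)$. This is where the universal property of the coends $A(V_{(i)})$ and $H$ does the real work: the coend is exactly the universal receptacle making the relevant ``sum over intermediate simple object'' independent of the chosen resolution, so the naturality squares for $\iota$ and $\jmath$ must be matched up carefully with the isotopies of $\Gamma$ across the arcs. The second genuinely non-trivial point is that the reconstruction map, composed with $\psi$, recovers the original string-net class; this amounts to the statement that every graph on $\Sigma-\{p\}$ can be isotoped (and locally simplified, using the relations of $\SN$) so that all its vertices sit in a neighbourhood of $p$ and each edge crosses each cutting arc at most in the standard pattern — a normal-form argument for graphs on the polygon $P$. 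The remaining verifications (linearity, basis-independence, compatibility with the relation defining $H(\BSig)$) are routine. Note that sphericality is never used: the coends $A$ and $H$ and the evaluation $\langle-\rangle_{\BQ}$ are defined for any pivotal fusion $\Cc$, so the argument of \cite{Kirillov:2011mk} goes through verbatim, which is why we may simply invoke it.
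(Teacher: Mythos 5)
Your proposal follows essentially the same route as the paper's (itself only sketched) proof: descend $\tilde\psi$ by treating separately relations supported away from the cutting arcs (properties of $\langle-\rangle_{\BQ}$) and those crossing them (dinaturality of $\iota$ and $\jmath$), then invert by expanding the coends over simple objects and drawing the corresponding graph with one vertex near $p$, the $U_i$-edges encircling the boundary components and the $S_j,T_j$-edges running through the handles, exactly as in Figure~\ref{fig:morph-to-graph}. The final checks ($\psi\circ\phi=\id$ by construction, $\phi\circ\psi=\id$ via the dual-basis sums) coincide with the paper's argument, so the proposal is correct.
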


\begin{proof}
Since the proof follows the line of argument in  \cite{Kirillov:2011mk}, we only sketch it here. 

\begin{figure}[bt]
a)\hspace{-1em}
\begin{tikzpicture}[baseline=6em]
\node at (0,0) {\includegraphics[width=18em]{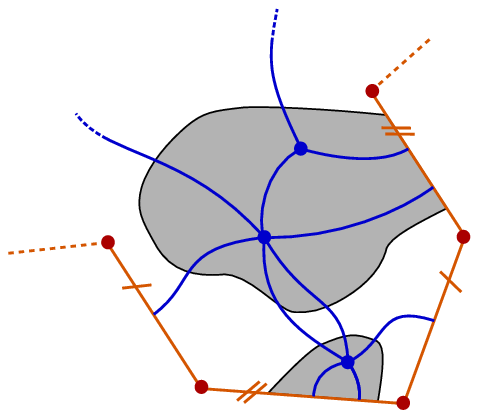}};
\node at (3.3,0.8) {\small $f_{1,1}$};
\node at (3.4,-2.2) {\small $f_{1,2}$};
\node at (1.0,-3.3) {\small $f_{1,3}$};
\node at (-1.4,-2.2) {\small $f_{1,4}$};
\node at (2.1,-2.5) {\small $v$};
\end{tikzpicture}
\hspace{2em}
b)\hspace{-1em}
\begin{tikzpicture}[baseline=6em]
\node at (0,0) {\includegraphics[width=18em]{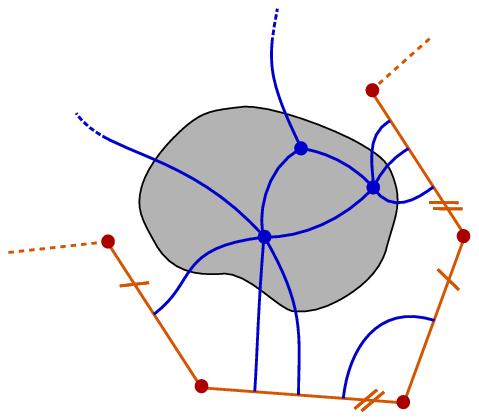}};
\node at (3.3,0.8) {\small $f_{1,1}$};
\node at (3.4,-2.2) {\small $f_{1,2}$};
\node at (1.0,-3.3) {\small $f_{1,3}$};
\node at (-1.4,-2.2) {\small $f_{1,4}$};
\node at (1.9,0.35) {\small $v$};
\end{tikzpicture}
	\caption{A vertex crossing a curve when contracting $f(Q)$. In passing from a) to b), the image of $Q$ (shaded area) is contracted to lie in the interior of the polygon $P$. In the process, the vertex $v$ crosses the edges $f_{1,1} \sim f_{1,3}$, which are identified in the surface $\Sigma$ (cf.\ Figure~\ref{fig:surface-polygon}).}
	\label{fig:dinaturality-and-vertex-crossing}
\end{figure}

Write $\tilde\psi$ also for the linear extension $\mathrm{span}_{\Bbbk} G \to H(\BSig)$. We first show that $\tilde\psi$ vanishes on the generators of $N(\BSig - \{p\})$ that lie in $\mathrm{span}_{\Bbbk} G$, so that $\tilde\psi$ descends to a map $\psi$ on $SN(\BSig - \{p\})$.
We distinguish two kinds of embeddings $f : Q \to \Sigma-\{p\}$: 1) $f(Q)$ is contained in the interior of the polygon $P$, or 2) $f(Q)$ intersects the cutting arcs in Figure~\ref{fig:surface-polygon}\,(a). 
Generators in $N(\BSig - \{p\})$ arising from case 1 are mapped to zero by $\tilde\psi$ by the properties of the map \eqref{eq:<>Q-def}. For case 2) one contracts $f(Q)$ until it is contained in $P$, deforming the graph (``stretching the edges'') along the deformation, see Figure~\ref{fig:dinaturality-and-vertex-crossing}. During the contraction, a vertex of the graph may cross one of the cutting arcs. That the coloured graph before and after the vertex crossing a cutting arc are mapped to the same element of $H(\BSig)$ follows from dinaturality of $\iota$ and $\jmath$. This reduces case 2 to case 1.

\begin{figure}[bt]
$$
\sum_{U_1,\dots,U_b,S_1,T_1,\dots,S_g,T_g}
\hspace{1em}
\begin{tikzpicture}[very thick,baseline=0em,scale=1.2]
\node at (0,0) {\includegraphics[width=18em]{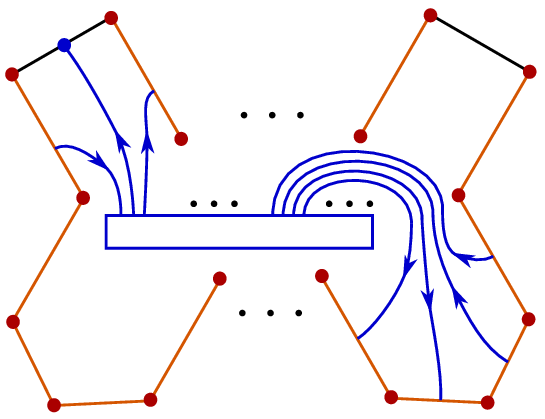}};
\node at (-2.9,0.75) {\small $e_{1,1}$};
\node at (-2.6,2.1) {\small $e_{1,2}$};
\node at (-1.05,1.5) {\small $e_{1,3}$};
\node at (2.9,0.75) {\small $e_{b,3}$};
\node at (2.6,2.1) {\small $e_{b,2}$};
\node at (1.05,1.5) {\small $e_{b,1}$};
\node at (-3.0,-0.5) {\small $f_{g,4}$};
\node at (-3.1,-1.8) {\small $f_{g,3}$};
\node at (-1.9,-2.5) {\small $f_{g,2}$};
\node at (-0.75,-1.7) {\small $f_{g,1}$};
\node at (3.0,-0.5) {\small $f_{1,1}$};
\node at (3.1,-1.8) {\small $f_{1,2}$};
\node at (1.9,-2.5) {\small $f_{1,3}$};
\node at (0.75,-1.7) {\small $f_{1,4}$};

\node at (-0.4,-0.25) {\small $\varphi$};

\node at (-1.9,1.6) {\footnotesize $V_{(1)}$};
\node at (-1.2,0.5) {\footnotesize $U_1$};
\node at (-2.3,0.9) {\footnotesize $U_1$};
\node at (2.45,-0.75) {\footnotesize $S_1$};
\node at (1.7,-1.5) {\footnotesize $S_1$};
\node at (2.3,-1.6) {\footnotesize $T_1$};
\node at (1.15,-1.0) {\footnotesize $T_1$};
\end{tikzpicture}
$$
	\caption{The linear combination of coloured graphs assigned to a morphism $\varphi \in H(\BSig)$
	(only the part of the coloured graph for the first boundary component and for the first handle are shown shown for better readability).	
		This figure also introduces a notation which will be used repeatedly below: instead of a vertex, a total order on the attached edges and a morphism labelling the vertex, we will write the morphism in a coupon and attache the edges to its top side.}
\label{fig:morph-to-graph}
\end{figure}

To show invertibility of $\psi$ we
construct a linear map $\phi : H(\BSig) \to \SN(\BSig - \{p\})$. This uses the explicit expressions for $A(V)$ and $H$ as sums over simple objects given in \eqref{eq:A(X)-via-simples} and \eqref{eq:H-via-simples}. The coloured graph in $\BSig-\{p\}$ assigned to a morphism in $H(\BSig)$ is shown in Figure~\ref{fig:morph-to-graph}.
By construction, $\psi \circ \phi = \id_{H(\BSig)}$. To see that also $\phi \circ \psi = \id_{\SN(\BSig - \{p\})}$, insert the explicit form of $\iota$ and $\jmath$ from \eqref{eq:A(X)-via-simples} and \eqref{eq:H-via-simples} and carry out the sum over the dual basis pair and the simple objects. 
\end{proof}

After being able to express string-net spaces of punctured surfaces in terms of Hom-spaces of $\Cc$, we will now see how the puncture can be removed. To do so, first note that there is a canonical surjection
\be
	\pi : \SN(\BSig-\{p\}) \longrightarrow \SN(\BSig) \quad , \quad [\Gamma] \mapsto [\Gamma] \ ,
\ee
and via this surjection, $N(\BSig-\{p\})$ is mapped to a subspace of $N(\BSig)$.

Next we construct an endomorphism $B_p$ of $\SN(\BSig-\{p\})$. For $[\Gamma] \in \SN(\BSig-\{p\})$, pick a small disc around $p$ which does not intersect $\Gamma$. Write $\Gamma(U)$ for the graph obtained from $\Gamma$ by adding the boundary of the disc as an additional edge, oriented anti-clockwise, and labelled by $U \in \Ic$. Then
\be\label{eq:idem-Bp}
	B_p([\Gamma]) = \sum_{U \in \Ic} \frac{\dim_r(U)}{\Dim(\Cc)} \, [\Gamma(U)] \ .
\ee
Here, some care has to be taken in the non-spherical case. For example, orienting the loop around $p$ clockwise would have required choosing $\dim_l(U)$ in the above definition. However, with this definition, $B_p$ has the same properties as in \cite[Thm.\,3.9]{Kirillov:2011mk}, namely:

\begin{lemma}
\begin{enumerate}
	\item $B_p$ is an idempotent.
	\item Let $\Gamma,\Gamma' \in \Graph(\BSig-\{p\})$ be two graphs which are identical outside of a disc around $p$, and which inside of the disc differ by one edge passing on different sides of $p$. That is, showing only the part of the graph inside the disc,
	\be
	\Gamma \,=~ 
	\begin{tikzpicture}[baseline=-0.3em]
\coordinate (t) at (0,1);
\coordinate (b) at (0,-1);
\coordinate (l) at (-0.5,0);
\coordinate (r) at (0.5,0);
\begin{scope}[very thick,blue!80!black,decoration={markings,mark=at position 0.5 with {\arrow{>}}}]
\draw[postaction={decorate}] (b)  .. controls ++(0,0.5) and ++(0,-0.5) .. (r) .. controls ++(0,0.5) and ++(0,-0.5) .. (t);
\end{scope}
\draw[very thick,orange!80!black,dashed] (0,0) circle (1);
\draw[very thick,black,fill=black] (0,0) circle (0.1);
\node at ([shift={(0,-0.35)}]0,0) {\small $p$};
\end{tikzpicture}
	\quad , \quad
	\Gamma' \,=~
	\begin{tikzpicture}[baseline=-0.3em]
\coordinate (t) at (0,1);
\coordinate (b) at (0,-1);
\coordinate (l) at (-0.5,0);
\coordinate (r) at (0.5,0);
\begin{scope}[very thick,blue!80!black,decoration={markings,mark=at position 0.5 with {\arrow{>}}}]
\draw[postaction={decorate}] (b)  .. controls ++(0,0.5) and ++(0,-0.5) .. (l) .. controls ++(0,0.5) and ++(0,-0.5) .. (t);
\end{scope}
\draw[very thick,orange!80!black,dashed] (0,0) circle (1);
\draw[very thick,black,fill=black] (0,0) circle (0.1);
\node at ([shift={(0,-0.35)}]0,0) {\small $p$};
\end{tikzpicture}
	\quad .
	\ee
	Then $B_p([\Gamma]) = B_p([\Gamma'])$.
\end{enumerate}
\end{lemma}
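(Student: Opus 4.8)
The plan is to deduce both statements from one local ``loop-fusion'' relation near $p$ together with a single identity in the Grothendieck ring of $\Cc$. For Part~1, the key local relation is that, for $U,W\in\Ic$ and a graph $\Gamma$ having two concentric anti-clockwise loops around $p$ (the inner one labelled $U$, the outer one labelled $W$), one has in $\SN(\BSig-\{p\})$
\[
  [\Gamma(W)(U)] \;=\; \sum_{S\in\Ic} N_{UW}^{~S}\,[\Gamma(S)]\ ,
\]
where $N_{UW}^{~S}=\dim_\Bbbk\Cc(S,U\otimes W)$ and the $S$-loop is again anti-clockwise. This follows by choosing an embedded disc meeting both loops, applying the completeness relation $\id_{U\otimes W}=\sum_{S\in\Ic}\sum_\alpha\alpha\circ\bar\alpha$ over dual bases $\alpha\in\Cc(S,U\otimes W)$, $\bar\alpha\in\Cc(U\otimes W,S)$, and then collapsing the resulting empty $U$--$W$ bigon by the dual-basis (``bubble'') relation; both moves are instances of null graphs, and one checks that keeping track of orientations leaves no $\dim_l/\dim_r$ ambiguity at this stage.

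The remaining input is the identity $z^2=\Dim(\Cc)\,z$ in the Grothendieck ring, where $z:=\sum_{U\in\Ic}\dim_r(U)[U]$. It comes from $\dim_r$ being a character together with $\dim_r(V^\vee)=\dim_l(V)$ from \eqref{eq:dimlr-trlr-rel}: for every $V$ one computes $[V]\cdot z=\sum_{U}\dim_r(U)[V\otimes U]=\sum_{S}\big(\sum_U\dim_r(U)\dim_\Bbbk\Cc(S,V\otimes U)\big)[S]=\sum_{S}\dim_r(V^\vee\otimes S)[S]=\dim_l(V)\,z$, and summing this against $\dim_r$ yields $z\cdot z=\Dim(\Cc)\,z$. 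Inserting the loop-fusion relation into $B_p^2([\Gamma])=\sum_{U,W}\frac{\dim_r(U)\dim_r(W)}{\Dim(\Cc)^2}[\Gamma(W)(U)]$, the coefficient of $[\Gamma(S)]$ becomes $\Dim(\Cc)^{-2}$ times the $[S]$-coefficient of $z^2=\Dim(\Cc)\,z$, i.e.\ $\dim_r(S)/\Dim(\Cc)$, which is exactly the coefficient in $B_p([\Gamma])$; hence $B_p^2=B_p$. This is precisely where the choice of $\dim_r$ (rather than a naive $\dim$) and the non-sphericality enter, since $\sum_U\dim_r(U)^2\neq\Dim(\Cc)$ in general.

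For Part~2, write $Y$ for the label of the edge that passes on the two sides of $p$. The strategy is to work after applying $B_p$ and to slide the $Y$-edge across the added anti-clockwise $U$-loop: the crossing is resolved by a local relation built from the completeness relation and the dinaturality of $\iota$ (morally the half-braiding on $\hat A(\one)$), which rewrites $[\Gamma(U)]$ as a combination $\sum_{W}K^{W}_{U,Y}[\Gamma'(W)]$ of graphs with the edge on the other side of $p$. Substituting into $B_p([\Gamma])$ and carrying out the $\dim_r$-weighted sum over $U$, the same collapse as in Part~1 --- now via $[Y]\cdot z=\dim_l(Y)\,z$ --- leaves $\sum_{W}\frac{\dim_r(W)}{\Dim(\Cc)}[\Gamma'(W)]=B_p([\Gamma'])$. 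I expect the main obstacle to be exactly this last step: pinning down the sliding relation and its coefficients $K^{W}_{U,Y}$ correctly in the non-spherical setting and verifying that the weighted sum telescopes; a secondary point is to ensure that every graphical manipulation is genuinely an instance of the null-graph relations and that no loop has its orientation reversed, which would trade $\dim_r$ for $\dim_l$.
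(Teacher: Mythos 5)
Your Part 1 is correct and follows a genuinely different route from the paper: the paper deduces idempotency from Part 2 (sliding one of the two loops off the puncture and contracting it to a factor $\dim_l(U)$), whereas you fuse the two concentric loops via the multiplicity relation $[\Gamma(W)(U)]=\sum_S N_{UW}^{~S}[\Gamma(S)]$ — which indeed holds with no dimension factors, since the bigon collapse only uses the ``easy'' direction $\alpha\circ\bar\beta=\delta_{\alpha\beta}\id_S$ of the dual-basis relation — and then conclude from the Grothendieck-ring identity $z^2=\Dim(\Cc)\,z$. That algebraic shortcut is valid and self-contained.

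Part 2, however, contains a genuine gap, and it sits exactly where the real content of the lemma lies. Your intermediate claim $[\Gamma(U)]=\sum_W K^W_{U,Y}\,[\Gamma'(W)]$ with scalar coefficients, for a \emph{fixed} simple $U$, is not available: resolving the crossing of the $Y$-edge with the $U$-loop by the completeness relation produces graphs carrying an extra pair of trivalent vertices ($\alpha\in\Cc(U\otimes Y,V)$ and its dual $\bar\alpha$), and for fixed $U$ these vertices cannot be eliminated — the class is not a scalar combination of the graphs $\Gamma'(W)$. They can only be removed \emph{after} performing the $\dim_r$-weighted sum over $U$, using the identity $\sum_{U}\frac{\dim_r(U)}{\dim_r(V)}\sum_\alpha(\cdots)=\id_{V\otimes W^\vee}$ of \eqref{eq:basis-sum-with-duals-1} (i.e.\ \cite[Lem.\,4.9]{Turaev-Virelizier-book}); this weighted sliding identity, with its specific $\dim_r(U)/\dim_r(V)$ factors, is precisely the step you defer as ``the main obstacle'', so the proof is not complete. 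Relatedly, the proposed reduction to the character identity $[Y]\cdot z=\dim_l(Y)\,z$ cannot by itself yield an equality of string-net classes: the classes $[\Gamma(U)]$, $[\Gamma'(W)]$ are decorated by concrete morphisms, and an identity in the Grothendieck ring controls only fusion multiplicities, not the morphism-level cancellation that the weighted identity provides. (The appeal to dinaturality of $\iota$ is also misplaced here: the required relation is a purely local null-graph relation in a disc, independent of the coend presentation.) Filling this gap essentially forces you back to the paper's argument: insert dual bases to fuse the edge with the loop, drag the vertex pair around $p$, and apply \eqref{eq:basis-sum-with-duals-1} to the $\dim_r$-weighted sum.
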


\begin{proof}
Part 2 amounts to the identities of equivalence classes of graphs given below. In these pictures, only a rectangular region of the surface and graph are shown. Outside of that region, the graph remains unchanged. The indication of the total order around a vertex will be omitted if it is clear from the labelling. The objects $U,V \in \Cc$ are simple, while $W \in \Cc$ is not necessarily simple.
\begin{align}
B_p([\Gamma]) &= 
\sum_{U \in \Ic} \frac{\dim_r(U)}{\Dim(\Cc)}
~
\begin{tikzpicture}[baseline=0em]
\coordinate (v1) at (0.5,0.5);
\coordinate (v2) at (0.5,-0.5);
\coordinate (tr) at (1.5,2);
\coordinate (bl) at (-1.5,-2);
\coordinate (t) at (0,2);
\coordinate (b) at (0,-2);
\coordinate (r) at (1.2,0);
\coordinate (l) at (-1.2,0);
\begin{scope}[very thick,blue!80!black,decoration={markings,mark=at position 0.5 with {\arrow{>}}}]
\draw [postaction={decorate}](v1)  arc  (0:180:0.5);
\end{scope}
\begin{scope}[very thick,blue!80!black,decoration={markings,mark=at position 0.5 with {\arrow{<}}}]
\draw [postaction={decorate}](v2)  arc  (0:-180:0.5);
\end{scope}
\begin{scope}[very thick,blue!80!black,decoration={markings,mark=at position 0.5 with {\arrow{>}}}]
\draw (v2) -- (v1);
\draw ([shift={(-1,0)}]v2) -- ([shift={(-1,0)}]v1);
\draw[postaction={decorate}] (b)  .. controls ++(0,0.5) and ++(0,-1) .. (r) .. controls ++(0,1) and ++(0,-0.5) .. (t);
\end{scope}
\draw[very thick,orange!80!black,dashed] (bl) rectangle (tr);
\draw[very thick,black,fill=black] (0,0) circle (0.1);
\node at ([shift={(0,-0.35)}]0,0) {\small $p$};
\node at ([shift={(-0.15,0.3)}]b) {\small $W$};
\node at ([shift={(-0.75,0.0)}]0,0) {\small $U$};
\end{tikzpicture}
~\overset{(1)}=
\sum_{U,V \in \Ic} \sum_\alpha \frac{\dim_r(U)}{\Dim(\Cc)}
~
\begin{tikzpicture}[baseline=0em]
\coordinate (v1) at (0.5,0.5);
\coordinate (v2) at (0.5,-0.5);
\coordinate (tr) at (1.5,2);
\coordinate (bl) at (-1.5,-2);
\coordinate (t) at (0,2);
\coordinate (b) at (0,-2);
\coordinate (r) at (1.2,0);
\coordinate (l) at (-1.2,0);
\begin{scope}[very thick,blue!80!black,decoration={markings,mark=at position 0.5 with {\arrow{>}}}]
\draw [postaction={decorate}](v1)  arc  (0:180:0.5);
\end{scope}
\begin{scope}[very thick,blue!80!black,decoration={markings,mark=at position 0.5 with {\arrow{<}}}]
\draw [postaction={decorate}](v2)  arc  (0:-180:0.5);
\end{scope}
\begin{scope}[very thick,blue!80!black,decoration={markings,mark=at position 0.5 with {\arrow{>}}}]
\draw[postaction={decorate}] (v2) -- (v1);
\draw ([shift={(-1,0)}]v2) -- ([shift={(-1,0)}]v1);
\draw[postaction={decorate}] (b)  .. controls ++(0,0.5) and ++(1,-1) .. (v2);
\draw[postaction={decorate}] (v1)  .. controls ++(1,1) and ++(0,-0.5) .. (t);
\end{scope}
\draw[very thick,blue!80!black,fill=blue!80!black] (v1) circle (0.1);
\draw[very thick,blue!80!black,fill=blue!80!black] (v2) circle (0.1);
\draw[very thick,orange!80!black,dashed] (bl) rectangle (tr);
\draw[very thick,black,fill=black] (0,0) circle (0.1);
\node at ([shift={(-0.15,0.3)}]b) {\small $W$};
\node at ([shift={(-0.2,-0.3)}]t) {\small $W$};
\node at (-0.75,0.0) {\small $U$};
\node at (0.8,0.0) {\small $V$};
\node at ([shift={(-0.3,0)}]v1) {\small $\bar\alpha$};
\node at ([shift={(-0.3,0)}]v2) {\small $\alpha$};
\end{tikzpicture}
\nonumber \\[1em]
&\overset{(2)}=
\sum_{V \in \Ic}  \frac{\dim_r(V)}{\Dim(\Cc)} \sum_{U,\alpha}
 \frac{\dim_r(U)}{\dim_r(V)} 
~
\begin{tikzpicture}[baseline=0em]
\coordinate (v1) at (0.5,0.5);
\coordinate (v2) at (0.5,-0.5);
\coordinate (tr) at (1.5,2);
\coordinate (bl) at (-1.5,-2);
\coordinate (t) at (0,2);
\coordinate (b) at (0,-2);
\coordinate (r) at (1.2,0);
\coordinate (l) at (-1.2,0);
\coordinate (c1) at (-1.2,1);
\coordinate (c2) at (-1.2,-1);
\begin{scope}[very thick,blue!80!black,decoration={markings,mark=at position 0.5 with {\arrow{>}}}]
\draw [postaction={decorate}](v1)  arc  (0:180:0.5);
\end{scope}
\begin{scope}[very thick,blue!80!black,decoration={markings,mark=at position 0.5 with {\arrow{<}}}]
\draw [postaction={decorate}](v2)  arc  (0:-180:0.5);
\end{scope}
\begin{scope}[very thick,blue!80!black,decoration={markings,mark=at position 0.5 with {\arrow{>}}}]
\draw (v1) -- (v2);
\draw[postaction={decorate}] ([shift={(-1,0)}]v1) -- ([shift={(-1,0)}]v2);
\draw[postaction={decorate}] (b) .. controls ++(0,0.5) and ++(0,-0.5) .. (c2) .. controls ++(0,0.6) and ++(-0.6,0.6) .. ([shift={(-1,0)}]v2);
\draw[postaction={decorate}] ([shift={(-1,0)}]v1) .. controls ++(-0.6,-0.6) and ++(0,-0.6) .. (c1) .. controls ++(0,0.5) and ++(0,-0.5) .. (t);
\end{scope}
\draw[very thick,blue!80!black,fill=blue!80!black] ([shift={(-1,0)}]v1) circle (0.1);
\draw[very thick,blue!80!black,fill=blue!80!black] ([shift={(-1,0)}]v2) circle (0.1);
\draw[very thick,orange!80!black,dashed] (bl) rectangle (tr);
\draw[very thick,black,fill=black] (0,0) circle (0.1);
\node at ([shift={(0.2,0.3)}]b) {\small $W$};
\node at ([shift={(0.25,-0.3)}]t) {\small $W$};
\node at (-0.8,0.0) {\small $U$};
\node at (0.8,0.0) {\small $V$};
\node at ([shift={(-0.7,0)}]v1) {\small $\bar\alpha$};
\node at ([shift={(-0.7,0)}]v2) {\small $\alpha$};
\end{tikzpicture}
~\overset{(3)}=
\sum_{V \in \Ic}  \frac{\dim_r(V)}{\Dim(\Cc)}
~
\begin{tikzpicture}[baseline=0em]
\coordinate (v1) at (0.5,0.5);
\coordinate (v2) at (0.5,-0.5);
\coordinate (tr) at (1.5,2);
\coordinate (bl) at (-1.5,-2);
\coordinate (t) at (0,2);
\coordinate (b) at (0,-2);
\coordinate (r) at (1.2,0);
\coordinate (l) at (-1.2,0);
\begin{scope}[very thick,blue!80!black,decoration={markings,mark=at position 0.5 with {\arrow{>}}}]
\draw [postaction={decorate}](v1)  arc  (0:180:0.5);
\end{scope}
\begin{scope}[very thick,blue!80!black,decoration={markings,mark=at position 0.5 with {\arrow{<}}}]
\draw [postaction={decorate}](v2)  arc  (0:-180:0.5);
\end{scope}
\begin{scope}[very thick,blue!80!black,decoration={markings,mark=at position 0.5 with {\arrow{>}}}]
\draw (v2) -- (v1);
\draw ([shift={(-1,0)}]v2) -- ([shift={(-1,0)}]v1);
\draw[postaction={decorate}] (b)  .. controls ++(0,0.5) and ++(0,-1) .. (l) .. controls ++(0,1) and ++(0,-0.5) .. (t);
\end{scope}
\draw[very thick,orange!80!black,dashed] (bl) rectangle (tr);
\draw[very thick,black,fill=black] (0,0) circle (0.1);
\node at ([shift={(0,-0.35)}]0,0) {\small $p$};
\node at ([shift={(0.25,0.3)}]b) {\small $W$};
\node at ([shift={(0.2,-0.3)}]t) {\small $W$};
\node at (0.75,0.0) {\small $V$};
\end{tikzpicture}
\nonumber\\[1em]
& = B_p([\Gamma']) \ .
\end{align}
In step 1 a sum over a basis $\{\alpha\}$ of $\Cc(UW,V)$ and the corresponding dual basis $\{ \bar\alpha \}$ of $\Cc(V,UW)$ has been inserted. The two bases are dual in the sense that $\alpha \circ \bar\beta = \delta_{\alpha,\beta}\, \id_V$. Step 2 is just a deformation of the coloured graph, where the vertices $\alpha$ and $\bar\alpha$ are dragged to the other side of the loop, and in step 3 we used \eqref{eq:basis-sum-with-duals-1} from the appendix.

Part 1 of the lemma follows from Part 2, as $B_p^2$ amounts to two loops around $p$, one of which can be changed to a small loop not encircling $p$, which then produces a factor of $\dim_l(U)$.
\end{proof}

The second key ingredient to compute string-net spaces is the following theorem.

\begin{theorem}\label{thm:remove-puncture}
The restriction  $\pi : \mathrm{im}(B_p) \to \SN(\BSig)$ is an isomorphism.
\end{theorem}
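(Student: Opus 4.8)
The plan is to produce a two-sided inverse to $\pi|_{\mathrm{im}(B_p)}$ in the shape of a linear map $\sigma:\SN(\BSig)\to\SN(\BSig-\{p\})$ with $\sigma\circ\pi=B_p$ and $\pi\circ\sigma=\id_{\SN(\BSig)}$. From these two identities the statement follows formally: since $B_p$ is idempotent, $x\in\mathrm{im}(B_p)$ means $B_p(x)=x$, so $\pi(x)=0$ forces $x=B_p(x)=\sigma(\pi(x))=0$ (injectivity); and for $z\in\SN(\BSig)$ the element $\sigma(z)$ satisfies $B_p(\sigma(z))=\sigma(\pi(\sigma(z)))=\sigma(z)$ and $\pi(\sigma(z))=z$, so it lies in $\mathrm{im}(B_p)$ and maps to $z$ (surjectivity).

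To define $\sigma$ I would use two facts valid in any string-net space: graphs related by an ambient isotopy of $\Sigma$ rel.\ boundary represent the same class (an isotopy is a composite of moves supported in discs, each a null relation), and every coloured graph in $\Sigma$ is isotopic to one missing the point $p$. Given a graph $\Gamma$ in $\Sigma$, pick an isotopic graph $\Gamma'$ missing $p$ and set $\sigma([\Gamma]):=B_p([\Gamma'])\in\SN(\BSig-\{p\})$. The substance of the proof is that this is well defined, in two layers. First, independence of the auxiliary $\Gamma'$: two graphs isotopic in $\Sigma$ and both missing $p$ are joined by a finite sequence of isotopies inside $\Sigma-\{p\}$ and moves in which a single edge sweeps across $p$ (a general-position argument in the one-parameter family of graphs); isotopies in $\Sigma-\{p\}$ do not change the class in $\SN(\BSig-\{p\})$, hence not $B_p$ of it, and an edge-across-$p$ move does not change $B_p$ by Part~2 of the preceding Lemma. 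Second, descent from $\VGraph(\BSig)$ to $\SN(\BSig)$: one must show $\sigma$ annihilates every elementary null graph $\sum_i c_i\Gamma_i$ of $\BSig$, supported in a square $f(Q)$. If $p\notin f(Q)$ this is immediate: the $\Gamma_i$ agree near $p$, so isotope them all in the same way near $p$ to miss $p$; the result is still an elementary null graph of $\BSig$ with square $f(Q)$, and since $f(Q)$ avoids $p$ it is simultaneously an elementary null graph of $\BSig-\{p\}$, so its class there vanishes and $\sigma(\sum_i c_i\Gamma_i)=B_p(0)=0$. The case $p\in f(Q)$ is reduced to this one by the lemma indicated below.

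Granting well-definedness, $\sigma\circ\pi=B_p$ holds because for $\Gamma$ missing $p$ one has $\pi([\Gamma])=[\Gamma]$ and $\sigma([\Gamma])=B_p([\Gamma])$, and such graphs span $\SN(\BSig-\{p\})$. For $\pi\circ\sigma=\id$ one computes, for $\Gamma'$ missing $p$,
\[
\pi\big(\sigma([\Gamma'])\big)=\sum_{U\in\Ic}\frac{\dim_r(U)}{\Dim(\Cc)}\,[\Gamma'(U)]_{\SN(\BSig)}=\sum_{U\in\Ic}\frac{\dim_r(U)\dim_l(U)}{\Dim(\Cc)}\,[\Gamma']=[\Gamma']\ ,
\]
where the middle step uses that in $\SN(\BSig)$ the point $p$ is an ordinary interior point, so the anti-clockwise $U$-loop around it bounds an embedded disc meeting nothing else and is removed at the cost of $\dim_l(U)$, and the last step uses $\sum_{U\in\Ic}\dim_l(U)\dim_r(U)=\Dim(\Cc)$. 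Since $\Gamma'$ may be chosen isotopic to any given $\Gamma$, this gives $\pi\circ\sigma=\id$ on all of $\SN(\BSig)$.

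I expect the genuine obstacle to be the case $p\in f(Q)$ in the descent step: one must show that a local relation of $\BSig$ whose defining disc contains $p$ is, modulo the edge-across-$p$ moves that $B_p$ already respects, a consequence of local relations supported away from $p$. Concretely this is the assertion that in a once-punctured disc every coloured graph can be isotoped into a sub-disc missing the puncture once one is permitted to push single edges across the puncture; I would isolate it as a separate lemma, proved by a transversality argument in the one-parameter family together with Part~2 of the Lemma, after which the squeezed configuration lies in a disc avoiding $p$ and isotopy invariance of $\langle-\rangle_{\BQ}$ shows the squeezed combination is again null. Everything else — isotopy invariance of evaluation, the value of a contractible labelled loop, and $\Dim(\Cc)\neq0$ from Lemma~\ref{lem:dim-non-zero} — is routine bookkeeping.
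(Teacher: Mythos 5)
The paper itself omits the proof of this theorem, deferring to the corresponding statement in Kirillov's string-net paper, and your argument is a correct rendition of precisely that standard argument: construct the inverse of $\pi|_{\mathrm{im}(B_p)}$ by isotoping graphs off $p$ and applying $B_p$, with well-definedness resting on general position plus Part 2 of the preceding Lemma and on squeezing any local relation whose square contains $p$ into a sub-square (still attached along the top edge, where the graphs meet $f(\partial Q)$) that avoids $p$. Your non-spherical bookkeeping is also exactly right: the anti-clockwise $U$-loop around the refilled point contracts to $\dim_l(U)$, which against the $\dim_r(U)$ prefactor sums to $\Dim(\Cc)$, matching the paper's conventions.
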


The proof is the same as that of \cite[Thm.\,3.9\,(3)]{Kirillov:2011mk} and we omit it here.

\medskip

We can thus compute the string-net spaces $\SN(\BSig)$ by first appealing to Theorem~\ref{thm:punctured-to-Hom} to transport the action of the idempotent $B_p$ on $SN(\BSig-\{p\})$ to the idempotent 
\be\label{eq:tilde-proj-def}
\tilde B_p := \psi \circ B_p \circ \psi^{-1} \quad : \quad H(\BSig) \to H(\BSig) \ .
\ee 
One can then work out the image of $\tilde B_p$, e.g.\ by working in a suitable basis of $H(\BSig)$, and use Theorem~\ref{thm:remove-puncture} to finally obtain $\mathrm{im}\tilde B_p \cong \SN(\BSig)$. The example of the sphere and torus given below, as well as the example in Section~\ref{sec:Zr-graded-vsp}, are based on this approach.

\subsubsection*{String-net space of a sphere}

Denote by $S^2$ the marked surface given by a sphere with empty boundary, and denote by $[S^2]$ the class in $\SN(S^2)$ represented by the empty graph on $S^2$. 

\begin{proposition}\label{prop:SN-S2-1d-or-0d}
If $\Cc$ is spherical, then $\SN(S^2)$ is one-dimensional with basis $[S^2]$. If $\Cc$ is not spherical, then $\SN(S^2) = \{0\}$.
\end{proposition}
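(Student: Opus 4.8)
The plan is to compute $\SN(S^2)$ by applying the machinery of Theorems~\ref{thm:punctured-to-Hom} and~\ref{thm:remove-puncture} to the sphere with one puncture. Writing $\BSig = S^2$ with empty boundary, so $b=0$ and $g=0$, the space $H(\BSig)$ in Theorem~\ref{thm:punctured-to-Hom} reduces to $\Cc(\one,\one)$, which is one-dimensional (spanned by $\id_\one$) since $\Cc$ is a fusion category with simple unit. Thus $\SN(S^2-\{p\}) \cong \Bbbk$. By Theorem~\ref{thm:remove-puncture}, $\SN(S^2) \cong \mathrm{im}(B_p) \subseteq \SN(S^2-\{p\})$, so it only remains to decide whether the idempotent $B_p$ is the identity or zero on this one-dimensional space.

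Next I would compute $\tilde B_p = \psi \circ B_p \circ \psi^{-1}$ on $H(\BSig)=\Cc(\one,\one)$ directly. Starting from the empty graph $\Gamma$ on $S^2-\{p\}$, whose class generates $\SN(S^2-\{p\})$ and which maps under $\psi$ to $\id_\one$, the definition \eqref{eq:idem-Bp} gives $B_p([\Gamma]) = \sum_{U\in\Ic} \frac{\dim_r(U)}{\Dim(\Cc)}[\Gamma(U)]$, where $\Gamma(U)$ is a small circle labelled $U$ around $p$. Under the isomorphism $\psi$, a $U$-labelled circle encircling the puncture on $S^2$ is evaluated to the morphism $\one \to \one$ obtained by closing up $\iota(\one)_U : U^\vee \otimes \one \otimes U \to A(\one)$ and projecting to the unit summand; concretely this contributes the scalar $\dim_l(U)$ (a loop not encircling $p$ shrinks to $\dim_l(U)\,\id_\one$, matching the orientation convention discussed after \eqref{eq:idem-Bp} and in the proof of Part~1 of the Lemma). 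Hence $\tilde B_p(\id_\one) = \left(\sum_{U\in\Ic}\frac{\dim_r(U)\dim_l(U)}{\Dim(\Cc)}\right)\id_\one = \id_\one$ by the definition of $\Dim(\Cc)$ — so on the punctured sphere $B_p$ is always the identity, and $\SN(S^2-\{p\}) \cong \Bbbk$ is recovered, which is not yet the point. The real content is the difference between $\SN(S^2-\{p\})$ and $\SN(S^2)$: the surjection $\pi$ identifies graphs related by moves through the puncture, i.e.\ the second part of the Lemma (an edge may pass on either side of $p$).

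So the crux is to reinterpret the computation on the \emph{closed} sphere. On $S^2$ there is no distinguished puncture, so a loop labelled $U$ bounding a disc on one side equally bounds a disc on the other side; evaluating it as bounding the disc \emph{not} containing the rest of the (empty) graph gives $\dim_l(U)\,[S^2]$, while the same loop, viewed via the cutting-arc description used to define $\psi$ (effectively treating the other disc as the relevant one), contributes $\dim_r(U)\,[S^2]$. Running the $B_p$-idempotent argument on $S^2$ therefore yields, on one hand $[S^2] = \pi(B_p([\varnothing])) = \sum_U \frac{\dim_r(U)}{\Dim(\Cc)}\,\dim_?(U)\,[S^2]$ with the two evaluations of the encircling loop now forced to agree in $\SN(S^2)$, giving the relation $\big(\sum_{U\in\Ic}\dim_r(U)^2\big)[S^2] = \Dim(\Cc)\,[S^2]$ as well as $\big(\sum_{U\in\Ic}\dim_l(U)\dim_r(U)\big)[S^2] = \Dim(\Cc)[S^2]$. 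Comparing these (or rather: the freedom to slide the loop across $p$ forces $\sum_U \dim_l(U)^2 = \sum_U \dim_l(U)\dim_r(U)$ to act the same way on $[S^2]$) shows that $[S^2]$ is annihilated unless $\sum_{U\in\Ic}\dim_l(U)^2 \neq 0$. By Lemma~\ref{lem:dim-sum-zero}, this holds precisely when $\Cc$ is spherical; when $\Cc$ is spherical the two evaluations coincide on the nose and $[S^2]$ survives, spanning a one-dimensional space; when $\Cc$ is not spherical $\sum_U \dim_l(U)^2 = 0$ forces $[S^2]=0$, and since $[S^2]$ generates, $\SN(S^2)=\{0\}$.

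The main obstacle I anticipate is bookkeeping the orientation and left-versus-right-dimension conventions carefully enough to extract the correct relation in $\SN(S^2)$: the whole phenomenon hinges on the fact that, on a sphere, an encircling loop can be shrunk on \emph{either} side, producing $\dim_l(U)$ one way and $\dim_r(U)$ the other, and one must show these give genuinely different scalars in $\VGraph$ that become identified only after quotienting — leading to the relation $\sum_U\dim_l(U)^2 = \Dim(\Cc)$ acting as a scalar constraint on $[S^2]$. Making this precise will likely require choosing an explicit embedding $f:Q\to S^2$ and tracking $\iota(\one)_U$ through $\psi$, together with the Lemma's move (2) to justify that the two sides of $p$ are identified in $\SN(S^2)$ but not in $\SN(S^2-\{p\})$. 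Once that relation is in hand, the conclusion is immediate from Lemma~\ref{lem:dim-sum-zero}.
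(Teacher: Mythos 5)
Your overall strategy in the first paragraph is exactly the paper's second argument (compute the idempotent $B_p$ on the one-dimensional space $\SN(S^2-\{p\})$ and invoke Theorem~\ref{thm:remove-puncture}), but the key scalar is computed incorrectly, and this is not a bookkeeping triviality -- it is the entire content of the proposition. The $U$-labelled loop encircling the puncture cannot be evaluated by shrinking it as an anti-clockwise loop ``not encircling $p$'': inside $S^2-\{p\}$ it can only be removed by sliding it around the back of the sphere, and there it bounds the complementary disc, so its orientation relative to that disc is reversed and it evaluates to $\dim_r(U)$, not $\dim_l(U)$ (equivalently: under $\psi$, with the chart $Q$ covering $S^2$ minus a neighbourhood of $p$, the loop is clockwise). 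Hence $\tilde B_p$ acts by $\sum_{U\in\Ic}\dim_r(U)^2/\Dim(\Cc)$, which by Lemma~\ref{lem:dim-sum-zero} is $1$ in the spherical case and $0$ otherwise; that is the paper's proof. Your value $\sum_U \dim_r(U)\dim_l(U)/\Dim(\Cc)=1$ would make $B_p$ the identity on $\SN(S^2-\{p\})$ always, and then Theorem~\ref{thm:remove-puncture} would force $\SN(S^2)\cong\Bbbk$ for every pivotal $\Cc$, contradicting the statement being proved (and, e.g., Proposition~\ref{prop:SN-Zr-genusg} at $g=0$, $r>2$).

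The attempted repair in the last two paragraphs does not close this gap. Theorem~\ref{thm:remove-puncture} asserts that $\pi$ restricted to $\mathrm{im}(B_p)$ is an \emph{isomorphism}, so the ``identification of graphs related by moves through the puncture'' is not something extra performed by $\pi$; it is already encoded in $B_p$ via part~2 of the lemma following \eqref{eq:idem-Bp}. Your subsequent relations on $[S^2]$ are a somewhat garbled version of the paper's first argument (drag a single $X$-loop around the sphere to equate $\dim_r(X)[S^2]$ with $\dim_l(X)[S^2]$), and while that line of thought does yield $[S^2]=0$ in the non-spherical case (since $\sum_U\dim_r(U)^2=0\neq\Dim(\Cc)$), in the spherical case ``the two evaluations coincide'' only shows the absence of this particular obstruction; it does not prove $[S^2]\neq 0$, for which one needs either the corrected $B_p$ computation together with Theorem~\ref{thm:remove-puncture}, or the results of Barrett--Westbury/Kirillov cited in the paper. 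With the sign of the loop evaluation fixed, your first paragraph's plan goes through verbatim and coincides with the paper's proof.
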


\begin{proof}
Given an arbitrary coloured graph $\Gamma$ on $S^2$ one can always find an embedding $f: Q \to S^2$ whose image contains the whole graph. 
Thus $\langle - \rangle_{\mathbf{Q}}$ assigns to it an element in $\Cc(\one,\one) = \Bbbk \, \id$, and in $\SN(S^2)$ we then have $[\Gamma] = \langle f^{-1}(\Gamma) \rangle_{\mathbf{Q}} \, [S^2]$. This shows that $\SN(S^2)$ is at most one-dimensional and is spanned by $[S^2]$.

We will give two different ways to complete the proof. 
The first way relates to the original motivation to introduce the notion ``spherical'' \cite{Barrett:1993zf,Barrett:1993ab}. Namely, a small clockwise loop labelled $X \in \Cc$ is equal to $\dim_r(X) \, [S^2]$, but by dragging it around the sphere it can be deformed to a small anti-clockwise loop, which is then equal to $\dim_l(X)\, [S^2]$. If there is an object for which $\dim_r(X) \neq \dim_l(X)$, we must have $[S^2]=0$. If such an object does not exist, $\Cc$ is spherical and $\SN(S^2)$ is one-dimensional, see \cite{Barrett:1993ab} and \cite[Cor.\,3.8]{Kirillov:2011mk}.

The second way uses the projector $B_p$. Namely, $B_p$ is an idempotent on $\SN(S^2 - \{p\})$ whose image is isomorphic to $\SN(S^2)$. Since $\SN(S^2 - \{p\})$ is topologically a disc, it has basis $[S^2 - \{p\}]$ (Theorem~\ref{thm:punctured-to-Hom}). We compute
\begin{align}
B_p([S^2-\{p\}]) 
&= 
\sum_{U \in \Ic} \frac{\dim_r(U)}{\Dim(\Cc)}
~
\begin{tikzpicture}[baseline=0em]
\coordinate (p) at (-0.5,0.5);
\shade [ball color=orange!50!white] (0,0) circle [radius=1.7];
\node at (1.5,1.5) {\small $S^2$};
\begin{scope}[very thick,blue!80!black,decoration={markings,mark=at position 0.25 with {\arrow{>}}}]
\draw [postaction={decorate}] (p) circle (0.7);
\end{scope}
\draw[very thick,black,fill=black] (p) circle (0.1);
\node at ([shift={(0.2,-0.2)}]p) {\small $p$};
\node at ([shift={(0.7,0.7)}]p) {\small $U$};
\end{tikzpicture}
~
= 
\sum_{U \in \Ic} \frac{\dim_r(U)}{\Dim(\Cc)} 
~
\begin{tikzpicture}[baseline=0em]
\coordinate (p) at (-0.5,0.5);
\shade [ball color=orange!50!white] (0,0) circle [radius=1.7];
\node at (1.5,1.5) {\small $S^2$};
\begin{scope}[very thick,blue!80!black,decoration={markings,mark=at position 0.25 with {\arrow{<}}}]
\draw [postaction={decorate}] ([shift={(1.0,-0.7)}]p) circle (0.6);
\end{scope}
\draw[very thick,black,fill=black] (p) circle (0.1);
\node at ([shift={(0.2,-0.2)}]p) {\small $p$};
\node at ([shift={(0.8,0.1)}]p) {\small $U$};
\end{tikzpicture}
\nonumber\\
&= 
\Big(\sum_{U \in \Ic} \frac{\dim_r(U)^2}{\Dim(\Cc)} \Big) [S^2-\{p\}] \ .
\end{align}
By Lemma~\ref{lem:dim-sum-zero}, this sum is either one or zero, depending on whether $\Cc$ is spherical or not.
\end{proof}

\subsubsection*{String-net space of an annulus}

Consider an annulus centred at the origin of the complex plane, say with radii $\frac12$ and $1$ and let $B$ be given by the two points $\tfrac12$ and $1$ on the real axis. For $U,V \in \Cc$, let $\mathbf{A}(U,V)$ be the marked surface with $(U,-)$ assigned to $\frac12 \in B$ and $(V,+)$ to $1 \in B$. 

It was noticed in \cite{Kirillov:2011mk} that the string-net space $SN(\mathbf{A}(U,V))$ describes morphisms in the Drinfeld centre $\Zc(\Cc)$ of $\Cc$. The argument remains valid for non-spherical $\Cc$ and we quickly recall it here, using the language of monads.

For $W \in \Cc$ and $f : U \to W^\vee \otimes V \otimes W$ let $\Gamma(f)$ be the coloured graph on $\mathbf{A}(U,V)$ given by
\be
	\Gamma(f) = 
\begin{tikzpicture}[baseline=0em]
\coordinate (s) at (-0.9,0);
\coordinate (f) at (0.5,0);
\draw[very thick,black] (0,0) circle (2.5);
\draw[very thick,black] (s) circle (0.6);
\draw[very thick,blue!80!black] ([shift={(0,-0.5)}]f) rectangle ++(0.7,1) node[pos=.5,black] {\small $f$};
\draw[very thick,blue!80!black,fill=blue!80!black] ([shift={(0.6,0)}]s) circle (0.1);
\draw[very thick,blue!80!black,fill=blue!80!black] (2.5,0) circle (0.1);
\begin{scope}[very thick,blue!80!black,decoration={markings,mark=at position 0.5 with {\arrow{>}}}]
\draw [postaction={decorate}] ([shift={(0.6,0)}]s) -- (f);
\draw [postaction={decorate}] ([shift={(0.7,0)}]f) -- (2.5,0);
\end{scope}
\begin{scope}[very thick,blue!80!black,decoration={markings,mark=at position 0.5 with {\arrow{<}}}]
\draw [postaction={decorate}](0,2)  arc  (90:270:2);
\draw (0,2) .. controls ++(1.2,0) and ++(1.2,0.4) .. ([shift={(0.7,0.3)}]f);
\draw (0,-2) .. controls ++(1.2,0) and ++(1.2,-0.4) .. ([shift={(0.7,-0.3)}]f);
\end{scope}
\node at (s) {\footnotesize $(U,-)$};
\node at (0.2,0.2) {\small $U$};
\node at (2.0,0.2) {\small $V$};
\node at (1.0,-1.4) {\small $W$};
\node at (3.1,0) {\footnotesize $(V,+)$};
\end{tikzpicture}
\ee
The same argument used in the proof of Theorem~\ref{thm:punctured-to-Hom} can be applied to show that
\be\label{eq:SNA-C(UAV)}
	\psi : SN( \mathbf{A}(U,V) ) \to \Cc(U,A(V))
	\quad , \quad
	\Gamma(f) \mapsto \iota(V)_W \circ f
\ee
is an isomorphism. Using \eqref{eq:forget-hatA-adjunction}, we get
\be
	\Cc( U, AV) = \Cc( U, \forget \hat A V) \xrightarrow{~\sim~} \Zc(\Cc)(\hat A(U),\hat A(V)) \ .
\ee
The inverse map to this isomorphism is provided by applying the forgetful functor and then composing with the unit of the monad $A$. The unit is given by the dinatural transformation $\iota(V)_\one : V = \one^\vee \otimes V \otimes \one \to A(V)$. Combining this with the inverse of the map $\psi$ in \eqref{eq:SNA-C(UAV)} as described in the proof of Theorem~\ref{thm:punctured-to-Hom}, we get the following statement, cf.~\cite[Sec.\,6]{Kirillov:2011mk}.

\begin{proposition}\label{prop:ZC-Hom-SN}
The linear map $\hat\phi : \Zc(\Cc)(\hat A(U),\hat A(V)) \to SN( \mathbf{A}(U,V) )$ given by
\be\label{eq:Hom-ZC-SN-iso}
g \longmapsto \Gamma(f)
\quad , ~~ \text{where} ~~
f = \big[ U \xrightarrow{=} \one^\vee U \one \xrightarrow{\iota_\one(U)} A(U) \xrightarrow{\forget(g)} A(V) \big] \ ,
\ee
is an isomorphism.
\end{proposition}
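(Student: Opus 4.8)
The plan is to assemble the claimed isomorphism $\hat\phi$ as a composite of three maps, each of which has already been shown (or is easily shown) to be an isomorphism, and to check that the composite is exactly the formula stated in \eqref{eq:Hom-ZC-SN-iso}. Concretely, the three maps are: first the adjunction isomorphism $\Zc(\Cc)(\hat A(U),\hat A(V)) \xrightarrow{\sim} \Cc(U,\forget\hat A(V)) = \Cc(U,A(V))$ from \eqref{eq:forget-hatA-adjunction}, second the identification $\Cc(U,A(V)) = \Cc(U,\forget\hat A(V))$ (literally an equality), and third the inverse of the isomorphism $\psi : SN(\mathbf A(U,V)) \xrightarrow{\sim} \Cc(U,A(V))$ from \eqref{eq:SNA-C(UAV)}. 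Since each factor is an isomorphism, so is the composite $\hat\phi = \psi^{-1} \circ (\text{adjunction})^{-1}$; the only real content is to verify that $\hat\phi$ sends $g$ to the coloured graph $\Gamma(f)$ with $f$ as in \eqref{eq:Hom-ZC-SN-iso}.

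First I would spell out the adjunction isomorphism \eqref{eq:forget-hatA-adjunction} explicitly. Its inverse sends a morphism $h : U \to \forget(Z)$ in $\Cc$ to the morphism $\hat A(U) \to Z$ in $\Zc(\Cc)$ obtained by applying $\forget$ freely and precomposing with the counit of the adjunction — but here we want the \emph{forward} direction: a central morphism $g : \hat A(U)\to \hat A(V)$ is sent to $\forget(g)\circ \eta_U$, where $\eta_U : U \to A(U)$ is the unit of the monad, i.e.\ the dinatural component $\iota(U)_{\one} : U = \one^\vee U \one \to A(U)$. Thus the adjunction part of the composite produces exactly $f = \forget(g)\circ\iota(U)_\one \in \Cc(U,A(V))$, matching the formula in \eqref{eq:Hom-ZC-SN-iso}. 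Then I would invoke \eqref{eq:SNA-C(UAV)}: the inverse of $\psi$ sends a morphism $U \to A(V)$ of the form $\iota(V)_W \circ f'$ to the graph $\Gamma(f')$. The only point needing a word of justification is that our $f = \forget(g)\circ\iota(U)_\one : U \to A(V)$ indeed arises as $\iota(V)_W \circ f'$ for a suitable $W$ and $f' : U \to W^\vee\otimes V\otimes W$; but this is immediate from the explicit description of $\psi^{-1}$ given in the proof of Theorem~\ref{thm:punctured-to-Hom} (decompose the target $A(V)$ over simples using \eqref{eq:A(X)-via-simples}, pick the summand $W = \one$, and read off $f' = f$), so that $\hat\phi(g) = \Gamma(f)$ as claimed.

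One subtlety I would address carefully, rather than suppress, is the well-definedness of the dashed line notation and the total order at the single interior vertex of $\Gamma(f)$, and that the picture drawn for $\Gamma(f)$ is consistent with the orientation and marking conventions of $\mathbf A(U,V)$ fixed just above the statement — in particular that the marked point $\tfrac12$ carries $(U,-)$ and $1$ carries $(V,+)$, so that $f$ is genuinely a morphism $U \to W^\vee\otimes V\otimes W$ and not some dualised variant. I would also note that none of this uses sphericality of $\Cc$: the maps $\iota$, $\jmath$, the monad $A$ and its lift $\hat A$, the adjunction \eqref{eq:forget-hatA-adjunction}, and the isomorphism $\psi$ are all available for an arbitrary pivotal fusion category, exactly as stressed in the text. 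The main obstacle, such as it is, is purely bookkeeping: tracking the unit $\iota(U)_\one$ through the chain of identifications and making sure the composite formula assembles into precisely \eqref{eq:Hom-ZC-SN-iso} rather than a version twisted by the pivotal structure; there is no hard estimate or clever construction involved, only a careful chase of the definitions already recorded in Sections~\ref{sec:stringnet} and~\ref{sec:string-compute}.
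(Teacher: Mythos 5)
Your overall route is exactly the paper's: the statement is obtained by composing the isomorphism $\psi : \SN(\mathbf{A}(U,V)) \to \Cc(U,A(V))$ of \eqref{eq:SNA-C(UAV)} (proved as in Theorem~\ref{thm:punctured-to-Hom}) with the adjunction \eqref{eq:forget-hatA-adjunction}, whose inverse direction is $g \mapsto \forget(g)\circ\iota(U)_\one$, so that $\hat\phi = \psi^{-1}\circ\big(g\mapsto \forget(g)\circ\iota(U)_\one\big)$ is an isomorphism and the only task is to identify $\psi^{-1}(f)$ with $\Gamma(f)$.

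The one step you get wrong is the parenthetical justification that $f=\forget(g)\circ\iota(U)_\one$ is of the form $\iota(V)_W\circ f'$ ``with $W=\one$ and $f'=f$''. This does not work: $f$ takes values in $A(V)=\bigoplus_{W\in\Ic}W^\vee\otimes V\otimes W$ and in general has nonzero components in \emph{all} simple summands, so it does not factor through the $W=\one$ summand, and $f'=f$ does not even typecheck as a morphism $U\to\one^\vee\otimes V\otimes\one$. (You seem to be conflating the unit $\iota(U)_\one$ sitting at the \emph{source} with the factorisation through $\iota(V)_W$ required at the \emph{target}.) The correct bookkeeping is the one the paper alludes to via the explicit description of $\psi^{-1}$ in the proof of Theorem~\ref{thm:punctured-to-Hom} (cf.\ Figure~\ref{fig:morph-to-graph}): decompose $f=\sum_{W\in\Ic} f_W$ with $f_W:U\to W^\vee\otimes V\otimes W$ the components with respect to \eqref{eq:A(X)-via-simples}, and read $\Gamma(f)$ as the linear combination $\sum_W\Gamma(f_W)$. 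Since for simple $W$ the dinatural map $\iota(V)_W$ is just the inclusion of the corresponding summand, one gets $\psi(\Gamma(f))=\sum_W\iota(V)_W\circ f_W=f$, i.e.\ $\Gamma(f)=\psi^{-1}(f)$ as needed; alternatively one may take the single non-simple object $W=\bigoplus_{S\in\Ic}S$ and $f'$ the composite of $f$ with the inclusion $A(V)\hookrightarrow W^\vee\otimes V\otimes W$. With that repair your argument coincides with the paper's proof.
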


We will be particularly interested in the case $U=V$. To describe a basis in this case, recall that since $\Cc$ is semisimple and $\Dim(\Cc)\neq 0$, also $\Zc(\Cc)$ is semisimple \cite{Muger2001b}. Let $\Jc$ be a choice of representatives of the isomorphism classes of simple objects in $\Zc(\Cc)$. We can then decompose
\be\label{eq:ZC-Hom-decomp}
	\Zc(\Cc)(\hat A(U),\hat A(U))
	\cong
	\bigoplus_{Z \in \Jc} \Zc(\Cc)(\hat A(U),Z) \otimes_{\Bbbk} \Zc(\Cc)(Z,\hat A(U)) \ .
\ee
Pick a basis $\{\alpha_Z \}$ of $\Zc(\Cc)(Z,\hat A(U))$ and denote by $\{\bar\alpha_Z\}$ the dual basis of $\Zc(\Cc)(\hat A(U),Z)$. Then $\{ \alpha_Z \circ \bar\beta_Z \}_{Z,\alpha_Z,\beta_Z}$ is a basis of $\Zc(\Cc)(\hat A(U),\hat A(U))$, and hence after transport with \eqref{eq:Hom-ZC-SN-iso} we get a basis of $SN( \mathbf{A}(U,U) )$.

\begin{remark}\label{rem:canonical-projector-centre}
Let $Y \in \Zc(\Cc)$. In the case $U=V=\forget(Y)$, the Hom-space $\Zc(\Cc)(\hat A(U),\hat A(V))$ contains an idempotent $p_Y$ whose image is $Y$. It can be explicitly given as (see \cite[Lem.\,8.3]{Kirillov:2011mk}\footnote{
	The explicit half-braiding on $\hat A(X)$ can be found in \eqref{eq:Ahat-halfbraid}. In verifying that $p_Y$ is compatible with the half-braiding, one will need to use that the sum in \eqref{eq:proj-AY-to-Y} involves $\dim_r(U)$ (rather than e.g.\ $\dim_r(V)$), cf.\ \eqref{eq:half-loops-morph-in-Z}.})
\be\label{eq:proj-AY-to-Y}
	p_Y := 
\sum_{U \in \Ic} \frac{\dim_r(U)}{\Dim(\Cc)}
\begin{tikzpicture}[baseline=3.2em]
\coordinate (vv) at (-0.7,0);
\coordinate (y) at (0,0);
\coordinate (v) at (0.7,0);
\coordinate (vvt) at ([shift={(0,2.8)}]vv);
\coordinate (yt) at ([shift={(0,2.8)}]y);
\coordinate (vt) at ([shift={(0,2.8)}]v);
\draw[very thick,black] (y) -- (yt);
\begin{scope}[very thick,black,decoration={markings,mark=at position 0.6 with {\arrow{>}}}]
\draw [postaction={decorate}] (v) -- ([shift={(0,0.2)}]v) .. controls ++(0,0.5) and ++(0,0.6) .. ([shift={(0,1)}]vv) -- (vv);
\end{scope}
\begin{scope}[very thick,black,decoration={markings,mark=at position 0.6 with {\arrow{>}}}]
\draw [postaction={decorate}] (vvt) -- ([shift={(0,-0.2)}]vvt) .. controls ++(0,-0.5) and ++(0,-0.6) .. ([shift={(0,-1)}]vt) -- (vt);
\end{scope}
\draw[very thick,black] (0,1) circle (0.1);
\draw[very thick,black] ([shift={(0,2.8)}] 0,-1) circle (0.1);
\node[below] at (vv) {\small $V^\vee$};
\node[below] at (y) {\small $Y$};
\node[below] at (v) {\small $V$};
\node[above] at (vvt) {\small $U^\vee$};
\node[above] at (yt) {\small $Y$};
\node[above] at (vt) {\small $U$};
\end{tikzpicture}
\quad .
\ee
In the pictorial notation above, the half-braiding $c_{Y,-}$ of $Y$ is represented as an encircled crossing of strings. That is, from bottom to top the two half-braidings in \eqref{eq:proj-AY-to-Y} are $c_{Y,V} : YV \to VY$ and $c_{Y,U^\vee} : YU^\vee \to U^\vee Y$.
The element in $SN( \mathbf{A}(\forget(Y),\forget(Y)) )$ corresponding to $p_Y$ is
\be
	\hat\phi(p_Y) = \sum_{U \in \Ic} \frac{\dim_r(U)}{\Dim(\Cc)} 
~
\begin{tikzpicture}[baseline=0em]
\draw[very thick,black] (0,0) circle (2);
\draw[very thick,black] (0,0) circle (0.8);
\draw[very thick,blue!80!black,fill=blue!80!black] (0.8,0) circle (0.1);
\draw[very thick,blue!80!black,fill=blue!80!black] (2,0) circle (0.1);
\draw[very thick,blue!80!black] (1.4,0) circle (0.15);
\begin{scope}[very thick,blue!80!black,decoration={markings,mark=at position 0.3 with {\arrow{>}}}]
\draw [postaction={decorate}] (0.8,0) -- (2,0);
\end{scope}
\begin{scope}[very thick,blue!80!black,decoration={markings,mark=at position 0.25 with {\arrow{<}}}]
\draw[postaction={decorate}] (0,0) circle (1.4);
\end{scope}
\node at (0.0,0) {\footnotesize $(\forget Y,-)$};
\node at (1.0,-1.4) {\small $U$};
\node at (2.8,0) {\footnotesize $(\forget Y,+)$};
\end{tikzpicture}
	\ ,
\ee
where the encircled crossing represents the half braiding $c_{Y,U^\vee}$.
\end{remark}

\subsubsection*{String-net space of a torus}

Let $T$ be a 2-torus and let $p \in T$ be a choice of marked point. Theorem~\ref{thm:punctured-to-Hom} gives the first isomorphism in
\be\label{eq:SN(T-p)-via-CUAU}
	\SN(T-\{p\}) \xrightarrow{~\psi~} \Cc(1,H) \cong \bigoplus_{U \in \Ic} C(U,A(U)) \ .
\ee
The second isomorphism follows from the explicit form \eqref{eq:H-via-simples} of $H$.
For $X \in \Zc(\Cc)$ define (we write $X$ also for the object of $\Cc$ obtained after applying the forgetful functor)
\be
	h_X ~=~ \sum_{U \in \Ic} \frac{\dim_r(U)}{\Dim(\Cc)} 
~~
\begin{tikzpicture}[baseline=0.8cm]
\coordinate (yv) at (-1.4,0);
\coordinate (vv) at (-0.7,0);
\coordinate (y) at (0,0);
\coordinate (v) at (0.7,0);
\coordinate (vvt) at ([shift={(0,1.2)}]vv);
\coordinate (yt) at ([shift={(0,1.2)}]y);
\coordinate (vt) at ([shift={(0,1.2)}]v);
\coordinate (yvt) at ([shift={(0,1.2)}]yv);
\draw[very thick,black] (y) -- (yt);
\draw[very thick,black] (yv) -- (yvt);
\draw[very thick,black] ([shift={(-0.35,0.6)}]yt) -- ++(0,0.8);
\begin{scope}[very thick,black,decoration={markings,mark=at position 0.5 with {\arrow{<}}}]
\draw [postaction={decorate}](y)  arc  (0:-180:0.7);
\end{scope}
\begin{scope}[very thick,black,decoration={markings,mark=at position 0.6 with {\arrow{>}}}]
\draw [postaction={decorate}] (vvt) -- ([shift={(0,-0.2)}]vvt) .. controls ++(0,-0.5) and ++(0,-0.6) .. ([shift={(0,-1)}]vt) -- (vt);
\end{scope}
\draw[very thick,black] ([shift={(0,1.2)}] 0,-1) circle (0.1);
\draw[very thick,black] ([shift={(-0.3,0)}]yvt) rectangle ++(2.7,0.6) node[pos=.5] {\small $\jmath_{X,U}$};
\node at ([shift={(0.3,-0.4)}]yt) {\small $X$};
\node at ([shift={(0.3,-0.4)}]vt) {\small $U$};
\node at (-0.5,-0.1) {\small $c_{X,U^\vee}$};
\node at ([shift={(-0.35,1.65)}]yt) {\small $H$}; 
\end{tikzpicture}	
~~	 \in ~ \Cc(\one,H) \ .
\ee
The encircled crossing represents the half-braiding of $X$, evaluated at $U^\vee$.
The preimage of $h_X$ in $\SN(T-\{p\})$ is given by
\be\label{eq:psi-1(hX)}
	\psi^{-1}(h_X) = \sum_{U \in \Ic} \frac{\dim_r(U)}{\Dim(\Cc)}  
\begin{tikzpicture}[baseline=1.4cm]
\coordinate (tr) at (2.6,3);
\coordinate (tl) at (0,3);
\coordinate (hx) at (1.3,0);
\coordinate (hy) at (0,1.5);
\begin{scope}[very thick,orange!80!black]
\draw (0,0) rectangle (tr);
\draw (0.7,-0.2) -- ++(0.2,0.4);
\draw (0.8,-0.2) -- ++(0.2,0.4);
\draw ([shift={(0.7,-0.2)}]tl) -- ++(0.2,0.4);
\draw ([shift={(0.8,-0.2)}]tl) -- ++(0.2,0.4);
\draw ([shift={(-0.2,-0.9)}]tl) -- ++(0.4,0.2);
\draw ([shift={(-0.2,-0.9)}]tr) -- ++(0.4,0.2);
\end{scope}
\coordinate (xy) at ([shift={(hy)}]hx);
\draw[very thick,blue!80!black] (xy) circle (0.15);
\begin{scope}[very thick,blue!80!black,decoration={markings,mark=at position 0.3 with {\arrow{>}}}]
\draw[postaction={decorate}] (hx) -- ([shift={(hy)}]xy);
\draw[postaction={decorate}] (hy) -- ([shift={(hx)}]xy);
\end{scope}
\draw[very thick,black,fill=black] (tl) circle (0.1);
\draw[very thick,black,fill=black] (tr) circle (0.1);
\draw[very thick,black,fill=black] ([shift={(hx)}]hx) circle (0.1);
\draw[very thick,black,fill=black] (0,0) circle (0.1);
\node[left] at (tl) {\small $p$}; 
\node[left] at (0,0) {\small $p$}; 
\node[right] at (tr) {\small $p$}; 
\node[right] at ([shift={(hx)}]hx) {\small $p$}; 
\node at ([shift={(0.5,0.25)}]hy) {\small $U$}; 
\node at ([shift={(0.25,0.5)}]hx) {\small $X$}; 
\end{tikzpicture}
	 \ ,
\ee
where the corner point $p$ is not part of the surface.
We have:

\begin{lemma}\label{lem:Bp-action-punct-torus}
The set $\{ h_Z \}_{Z \in \Jc}$ is linearly independent in $\Cc(\one,H)$, and $\{ \psi^{-1}(h_Z) \}_{Z \in \Jc}$ spans the image of the idempotent $B_p$ in $\SN(T-\{p\})$.
\end{lemma}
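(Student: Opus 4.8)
The plan is to exploit the isomorphism $\psi$ from Theorem~\ref{thm:punctured-to-Hom} and the decomposition $\Cc(\one,H)\cong\bigoplus_{U\in\Ic}\Cc(U,A(U))\cong\bigoplus_{U\in\Ic}\Cc(U,\forget\hat A(U))$ from \eqref{eq:SN(T-p)-via-CUAU}, together with the adjunction \eqref{eq:forget-hatA-adjunction} which identifies this with $\bigoplus_U\Zc(\Cc)(\hat A(U),\hat A(U))$. Under these identifications I expect $\Cc(\one,H)$ to be canonically isomorphic to $\bigoplus_{U\in\Ic}\End_{\Zc(\Cc)}(\hat A(U))$, which by the semisimplicity of $\Zc(\Cc)$ decomposes further as $\bigoplus_{U\in\Ic}\bigoplus_{Z\in\Jc}\Zc(\Cc)(\hat A(U),Z)\otimes\Zc(\Cc)(Z,\hat A(U))$. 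The first step is therefore to track the element $h_Z$ through all of these isomorphisms and show that it corresponds to (a nonzero multiple of) the ``Ocneanu-type'' idempotent summed over $U$, i.e.\ roughly $\sum_{U\in\Ic}\sum_{\alpha}\alpha_Z^{(U)}\circ\bar\alpha_Z^{(U)}$ built from dual bases of $\Zc(\Cc)(Z,\hat A(U))$ and $\Zc(\Cc)(\hat A(U),Z)$; the key point is that the encircled crossing $c_{X,U^\vee}$ appearing in the definition of $h_X$ is exactly the half-braiding data needed to make the $U$-summand land in the $Z$-isotypic component. Linear independence of $\{h_Z\}_{Z\in\Jc}$ then follows because distinct simple $Z$ contribute to orthogonal summands of the direct sum decomposition, provided each $h_Z$ is nonzero — and nonvanishing can be seen by pairing $h_Z$ against the canonical projector $p_Z$ of Remark~\ref{rem:canonical-projector-centre} (for $U=V=\forget(Z)$), or by noting that $\hat A(Z)$ always contains $Z$ as a summand so the relevant Hom-spaces are nonzero and $\Dim(\Cc)\neq 0$ by Lemma~\ref{lem:dim-non-zero}.

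The second step is to identify the image of the transported idempotent $\tilde B_p=\psi\circ B_p\circ\psi^{-1}$ on $\Cc(\one,H)$ and show its image is spanned by $\{h_Z\}_{Z\in\Jc}$. Concretely I would compute $\tilde B_p(h_X)$ for arbitrary $X\in\Zc(\Cc)$ using the pictorial description of $B_p$ in \eqref{eq:idem-Bp}: adding a small loop labelled $U'$ around the puncture $p$ in the picture \eqref{eq:psi-1(hX)}, one can slide this loop so that it encircles the $X$-strand, at which point it acts on $X$ via the half-braiding; by the defining property of a half-braiding together with \eqref{eq:basis-sum-with-duals-1} from the appendix (the same manipulation used in the proof of the preceding lemma), the loop-sum collapses and produces a projection of $X$ onto its isotypic components, i.e.\ $\tilde B_p(h_X)=\sum_{Z\in\Jc}(\text{mult. of }Z\text{ in }X)\,h_Z$ up to normalisation. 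In particular $\tilde B_p(h_Z)=h_Z$ for simple $Z$, so each $h_Z$ lies in the image, and conversely every element of $\Cc(\one,H)$ is a linear combination of the $h_X$ (since the $\hat A(U)$-picture and the adjunction show $h_{(-)}$ is essentially the universal way to produce elements of $\Cc(\one,H)$ from objects of $\Zc(\Cc)$, and every object is a sum of simples), so its image under $\tilde B_p$ is in the span of $\{h_Z\}$. Transporting back along $\psi^{-1}$ gives the claim about $\SN(T-\{p\})$.

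I expect the main obstacle to be the bookkeeping in the first step: making precise the claim that $h_X$ corresponds under the chain of isomorphisms $\psi$, \eqref{eq:SN(T-p)-via-CUAU}, \eqref{eq:forget-hatA-adjunction}, \eqref{eq:ZC-Hom-decomp} to the natural central-element construction, and in particular getting the half-braiding conventions and the placement of $\dim_r(U)$ (rather than $\dim_l(U)$ or $\dim_r(U^\vee)$) to match — this is exactly the kind of non-spherical subtlety flagged in the footnote to Remark~\ref{rem:canonical-projector-centre}. A clean way to sidestep some of this is to prove the two assertions somewhat independently: linear independence of $\{h_Z\}$ directly, by exhibiting dual functionals (e.g.\ composition with $p_Z$ followed by a trace), and the spanning statement via the explicit loop-sliding computation of $\tilde B_p$, so that the delicate identification of $h_Z$ with a specific idempotent is needed only up to nonzero scalar.
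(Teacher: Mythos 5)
Your treatment of linear independence is workable and essentially the paper's argument: there one composes $h_Y$ with the projection $\pi_V : H \to V^\vee \otimes A(V)$ and a dual-basis morphism $\bar\gamma_Z \in \Zc(\Cc)(\hat A(V),Z)$, observes via \eqref{eq:half-loops-morph-in-Z} that the resulting composite is (up to the spectator $V^\vee$-leg) a morphism $Y \to Z$ in $\Zc(\Cc)$, hence zero for $Y \not\cong Z$, and then checks non-vanishing concretely at $V=\one$, where $\pi_\one \circ h_Z = \sum_{V\in\Ic}\dim\Cc(V,Z)\,\coevR_V \neq 0$. Your fallback of exhibiting dual functionals is exactly this; the only caution is that the non-vanishing has to be verified somewhere explicitly, and $V=\one$ is the cheapest place.

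The spanning statement is where your argument has a genuine gap: the claim that every element of $\Cc(\one,H)$ is a linear combination of the $h_X$, $X \in \Zc(\Cc)$, is false in general. Since $X \mapsto h_X$ is additive under direct sums and depends only on the isomorphism class of $X$ in $\Zc(\Cc)$, the span of all $h_X$ coincides with $\mathrm{span}\{h_Z\}_{Z\in\Jc}$ and so has dimension at most $|\Jc|$, whereas $\dim_{\Bbbk}\Cc(\one,H) = \sum_{S,T\in\Ic}\dim\Cc(T\otimes S, S\otimes T)$ is strictly larger in general: for the Fibonacci modular category this number is $5$ while $|\Jc|=4$. Consequently your argument only yields (after verifying $\tilde B_p(h_Z)=h_Z$, which indeed requires the clockwise-loop versus $\dim_l/\dim_r$ bookkeeping you flag) the containment $\mathrm{span}\{h_Z\}\subseteq \mathrm{im}\,\tilde B_p$; it does not give $\mathrm{im}\,\tilde B_p \subseteq \mathrm{span}\{h_Z\}$, which is the substance of the lemma. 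The paper supplies exactly this missing step: it first produces an honest spanning set of $\SN(T-\{p\})$ indexed by arbitrary \emph{morphisms}, using $\Cc(U,A(U)) \cong \Zc(\Cc)(\hat A(U),\hat A(U))$ and semisimplicity of $\Zc(\Cc)$ to reduce to graphs labelled by arbitrary $F_Z \in \Zc(\Cc)(Z,\hat A(Z))$ with $Z\in\Jc$, and then computes $B_p$ on each such graph by a nontrivial graphical manipulation --- sliding the inserted loop, invoking the explicit half-braiding \eqref{eq:Ahat-halfbraid} on $\hat A(Z)$ and dual-basis identities of the type \eqref{eq:basis-sum-with-duals-1}, and finally \eqref{eq:half-loops-morph-in-Z} --- to conclude that the outcome is a scalar multiple of $\psi^{-1}(h_Z)$ because $\End_{\Zc(\Cc)}(Z)=\Bbbk\,\id_Z$. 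Your loop-sliding computation of $\tilde B_p(h_X)$ is a special case of that calculation, but applied only to the too-small family $\{h_X\}$ it cannot control the image of the idempotent.
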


The proof of this is given in Appendix~\ref{app:Bp-action-punct-torus}. Combining this lemma with Theorem~\ref{thm:remove-puncture} immediately gives the following statement (which in the spherical case follows from results in \cite{Kirillov:2011mk}).

\begin{proposition}\label{prop:SN-basis-torus}
$\{ \pi(\psi^{-1}(h_Z)) \}_{Z \in \Jc}$ is a basis of $\SN(T)$.
\end{proposition}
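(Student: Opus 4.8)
The plan is to reduce Proposition~\ref{prop:SN-basis-torus} to the two preceding results. By Theorem~\ref{thm:remove-puncture}, the canonical surjection $\pi$ restricts to an isomorphism $\mathrm{im}(B_p) \xrightarrow{\sim} \SN(T)$, where $B_p$ is the idempotent on $\SN(T-\{p\})$ from \eqref{eq:idem-Bp}. So it suffices to exhibit a basis of $\mathrm{im}(B_p)$ and then push it forward along $\pi$. Lemma~\ref{lem:Bp-action-punct-torus} provides exactly this: it asserts that $\{\psi^{-1}(h_Z)\}_{Z\in\Jc}$ spans $\mathrm{im}(B_p)$. Since $\pi$ is injective on $\mathrm{im}(B_p)$, the image set $\{\pi(\psi^{-1}(h_Z))\}_{Z\in\Jc}$ spans $\SN(T)$ and has the same cardinality, so it is a basis provided the spanning set $\{\psi^{-1}(h_Z)\}_{Z\in\Jc}$ is itself linearly independent.

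For the linear independence, I would argue as follows. The map $\psi : \SN(T-\{p\}) \to \Cc(\one,H)$ of Theorem~\ref{thm:punctured-to-Hom} is an isomorphism, so $\{\psi^{-1}(h_Z)\}_{Z\in\Jc}$ is linearly independent in $\SN(T-\{p\})$ if and only if $\{h_Z\}_{Z\in\Jc}$ is linearly independent in $\Cc(\one,H)$; but this is the first assertion of Lemma~\ref{lem:Bp-action-punct-torus}. Combining: $\{\psi^{-1}(h_Z)\}_{Z\in\Jc}$ is a linearly independent spanning set for $\mathrm{im}(B_p)$, hence a basis of $\mathrm{im}(B_p)$; applying the isomorphism $\pi|_{\mathrm{im}(B_p)}$ of Theorem~\ref{thm:remove-puncture} transports it to a basis $\{\pi(\psi^{-1}(h_Z))\}_{Z\in\Jc}$ of $\SN(T)$. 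One small point to note is that $\psi^{-1}(h_Z)$ already lies in $\mathrm{im}(B_p)$ — equivalently $B_p(\psi^{-1}(h_Z)) = \psi^{-1}(h_Z)$ — which is part of what Lemma~\ref{lem:Bp-action-punct-torus} encodes when it says these elements span $\mathrm{im}(B_p)$ (a spanning set for the image of an idempotent consists of fixed vectors of that idempotent).

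In this formulation there is essentially no obstacle left in the proof of the proposition itself: it is a formal concatenation of Theorems~\ref{thm:punctured-to-Hom} and~\ref{thm:remove-puncture} with Lemma~\ref{lem:Bp-action-punct-torus}, together with the observation that the dimension of $\SN(T)$ then equals $|\Jc|$, the number of simple objects in $\Zc(\Cc)$, recovering the proposition stated in the introduction. The genuine content — and the real obstacle — is hidden in Lemma~\ref{lem:Bp-action-punct-torus}, whose proof is deferred to Appendix~\ref{app:Bp-action-punct-torus}; there one must compute the conjugated idempotent $\tilde B_p = \psi\circ B_p\circ\psi^{-1}$ on $\Cc(\one,H) \cong \bigoplus_{U\in\Ic}\Cc(U,A(U))$ explicitly, identify its image with the span of the classes $h_Z$ indexed by simple objects $Z$ of the Drinfeld centre, and verify linear independence of the $h_Z$ (for instance by pairing with a suitable dual family, or by recognising $\tilde B_p$ as the projection onto the space of morphisms compatible with the half-braiding, i.e.\ onto $\bigoplus_{Z\in\Jc}\Zc(\Cc)(Z,Z)$ sitting inside $\Cc(\one,H)$). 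Care is needed here precisely because $\Cc$ is not assumed spherical: the loop around $p$ must be oriented so that $\dim_r$ rather than $\dim_l$ appears in \eqref{eq:idem-Bp}, matching the conventions in \eqref{eq:A(X)-via-simples}, \eqref{eq:H-via-simples} and in the definition of $h_X$.
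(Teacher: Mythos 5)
Your proposal is correct and follows exactly the paper's route: the paper also obtains the proposition as an immediate combination of Lemma~\ref{lem:Bp-action-punct-torus} (linear independence of the $h_Z$ and the fact that the $\psi^{-1}(h_Z)$ span $\mathrm{im}(B_p)$) with the isomorphisms of Theorems~\ref{thm:punctured-to-Hom} and~\ref{thm:remove-puncture}. You are also right that the genuine content lies in the deferred proof of Lemma~\ref{lem:Bp-action-punct-torus}, not in this formal concatenation.
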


The coloured graph for $\pi(\psi^{-1}(h_Z))$ is the same as in \eqref{eq:psi-1(hX)}, except that now the point $p$ corresponding to the four corners of the square is not removed.

\medskip

In summary, we see that while the string-net space for the sphere behaves differently in the spherical and non-spherical case, for the annulus and torus, no difference arises. The example treated in the next section illustrates that for higher genus the behaviour is in general again different.

\section{Example: $\Zb_r$-graded vector spaces}\label{sec:Zr-graded-vsp}

In this section, let $\Cc_r$, $r \in \Zb_{>0}$, denote the braided monoidal category of $\Zb_r$-graded $\Cb$-vector spaces with its usual tensor product functor, and with associator and symmetric braiding given by that of the underlying vector spaces. The braiding is of course not needed in the definition of string-net spaces, but it will be useful in the projector computation below.

Let $\Cb_u$ denote the one-dimensional $\Cb$-vector space of grade $u \in \Zb_r$ and let $1_u \in \Cb_u$ be its canonical basis. The tensor unit of $\Cc_r$ is $\one = \Cb_0$. The dual $X^\vee$ of a $\Zb_r$-graded vector space is its vector space dual $X^*$ with grading convention $(X^\vee)_u = (X_{-u})^*$. We denote the basis element of $(\Cb_u)^\vee$ dual to $1_u \in \Cb_u$ by $1_{-u}^*$.
The pivotal structures on $\Cc_r$ are parametrised by an $r$'th root of unity $\zeta \in \Cb$ by setting $\delta_{\Cb_u}(1_u) = \zeta^u \, 1_u^{**}$ (see e.g.\ \cite[Sec.\,1.7.3]{Turaev-Virelizier-book}).
We choose the left duality maps to be the same as for vector spaces. Explicitly, on simple objects:
\be
\evL_{\Cb_u}(1_{-u}^* \otimes 1_u) = 1_0 \in \Cb_0
\quad  , \quad
\coevL_{\Cb_u}(1_0) = 1_u \otimes 1_{-u}^* \ .
\ee
The right duality maps are obtained via \eqref{eq:right-duals} and depend on $\zeta$:
\be
\evR_{\Cb_u}(1_{u} \otimes 1_{-u}^*) = \zeta^u \, 1_0 
\quad  , \quad
\coevR_{\Cb_u}(1_0) = \zeta^{-u} \, 1_{-u}^* \otimes 1_{u} \ .
\ee
This gives
\be
	\dim_l(\Cb_u) = \zeta^{-u} 
	\quad , \quad
	\dim_r(\Cb_u) = \zeta^u 
		\quad , \quad
	\Dim(\Cc_r) = r \ .
\ee
Note that $\Cc_r$ is spherical iff $\zeta^u = \zeta^{-u}$ for all $u \in \Zb_r$.
We will be interested in the case that $\zeta$ is a primitive $r$'th root of unity,
\be\label{eq:zeta-primitive}
	\zeta^u = 1 \qquad \Leftrightarrow \qquad u \equiv 0 \mod r \ ,
\ee
and we will assume this from now on. Under this assumption one finds that 
\be
	\Cc \text{ spherical}
	\quad \Leftrightarrow
	\quad
	r \in \{ 1,2 \} \ .
\ee
For $r=1$ we obtain just ungraded $\Cb$-vector spaces, and for $r=2$ the category $\Cc_r$ is equivalent to super-vector spaces as a pivotal category (but not as a braided category as the braiding of $\Cc_r$ does not involve parity signs).

\medskip

\begin{figure}[btp]
a)	
\vspace{-3em}
$$
B_p(\psi^{-1}(\varphi)) 
~=~ 
\sum_{U,S_1,T_1,\dots,S_g,T_g \in \Ic}  \frac{\dim_r(U)}{\Dim(\Cc)} 
~~
\begin{tikzpicture}[very thick,baseline=1em,scale=1.2]
\node at (0,0) {\includegraphics[width=18em]{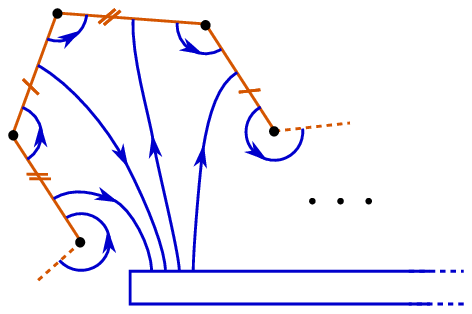}};
\node at (-2.8,-0.6) {\small $f_{1,1}$};
\node at (-3.0,1.3) {\small $f_{1,2}$};
\node at (-1.2,2.1) {\small $f_{1,3}$};
\node at (0.3,1.3) {\small $f_{1,4}$};

\node at (-2,-0.3) {\small $S_1$};
\node at (-1.8,0.8) {\small $T_1$};
\node at (-1.05,1) {\small $S_1$};
\node at (-0.25,-0.6) {\small $T_1$};

\node at (-2,-1.75) {\small $U$};
\node at (-2.45,0.4) {\small $U$};
\node at (-2,1.45) {\small $U$};
\node at (-0.5,1.2) {\small $U$};
\node at (0.5,-0.25) {\small $U$};

\node at (-2.3,-1.1) {\small $p$};
\node at (-3.2,0.3) {\small $p$};
\node at (-2.4,2.2) {\small $p$};
\node at (-0.4,2.0) {\small $p$};
\node at (0.6,0.6) {\small $p$};

\node at (0.2,-1.8) {\small $\varphi$};
\end{tikzpicture}
$$
\vspace{1em}
b)
\vspace{-2em}
$$
\tilde B_p(\varphi) 
~=~ 
\sum_{U,S_1,T_1,\dots,S_g,T_g \in \Ic}  \frac{\dim_r(U)}{\Dim(\Cc)} 
~~
\begin{tikzpicture}[baseline=3em,very thick,black]
\tikzmath{\h1=3;\h2=1;\dx=0.2;\step=1;\c1=1;\c2=0.4;\off=6;}

\foreach \x in {0,\step,{2*\step}} {
	\draw (\x,0)--(\x,\h1);
	\begin{scope}[decoration={markings,mark=at position 0.46 with {\arrow{>}}}]
	\draw[postaction={decorate}] 
	({\x+\dx},\h1) 
	.. controls ++(0,{-\c1}) and ++({-\c2},0) .. 
	({\x+1/2*\step},\h2) 
	.. controls ++(\c2,0) and ++(0,{-\c1}) .. 
	({\x+\step-\dx},\h1);
	\end{scope}
}  	
\draw ({3*\step},0)--({3*\step},\h1);
\begin{scope}[decoration={markings,mark=at position 0.9 with {\arrow{>}}}]
\draw[postaction={decorate}] 
({3*\step+\dx},\h1) 
.. controls ++(0,{-\c1}) and ++({-\c2},0) .. 
({3*\step+1/2*\step},\h2);
\end{scope}
\draw[very thick,black] ({-\dx-0.2},\h1) rectangle ++({3*\step+2*\dx+0.4},0.6) node[pos=.5] {\small $\jmath$};
\draw ({3/2 * \step},{\h1+0.6})--++(0,1);

\foreach \x in {{\off},{\step+\off},{2*\step+\off}} {
	\draw ({\x+\step},0)--({\x+\step},\h1);
	\begin{scope}[decoration={markings,mark=at position 0.46 with {\arrow{>}}}]
	\draw[postaction={decorate}] 
	({\x+\dx},\h1) 
	.. controls ++(0,{-\c1}) and ++({-\c2},0) .. 
	({\x+1/2*\step},\h2) 
	.. controls ++(\c2,0) and ++(0,{-\c1}) .. 
	({\x+\step-\dx},\h1);
	\end{scope}
}  	
\draw ({\off},0)--({\off},\h1);
\begin{scope}[decoration={markings,mark=at position 0.9 with {\arrow{>}}}]
\draw[postaction={decorate}] 
({3*\step+\dx},\h1) 
.. controls ++(0,{-\c1}) and ++({-\c2},0) .. 
({3*\step+1/2*\step},\h2);
\end{scope}
\draw[very thick,black] ({-\dx-0.2+\off},\h1) rectangle ++({3*\step+2*\dx+0.4},0.6) node[pos=.5] {\small $\jmath$};
\begin{scope}[decoration={markings,mark=at position 0.2 with {\arrow{>}}}]
\draw[postaction={decorate}] 
({\off-1/2*\step},\h2) 
.. controls ++(\c2,0) and ++(0,{-\c1}) .. 
({\off-\dx},\h1);
\end{scope}
\draw ({\off+3/2 * \step},{\h1+0.6})--++(0,1);

\begin{scope}[decoration={markings,mark=at position 0.5 with {\arrow{<}}}]
\draw[postaction={decorate}] 
({-\dx},\h1) 
.. controls ++(0,{-\c1}) and ++(0,1) .. 
(-0.6,-0.6)
.. controls ++(0,-1) and ++(-1,0) .. 
({(3*\step+1/2*\step+\off-1/2*\step)/2},-2)
.. controls ++(1,0) and ++(0,-1) .. 
(\off+3*\step+0.6,-0.6)
.. controls ++(0,1) and ++(0,-\c1) .. 
({\off+3*\step+\dx},\h1);
\end{scope}

\draw[very thick,black] (-0.3,-0.6) rectangle ++(3*\step+\off+0.6,0.6) node[pos=.5] {\small $\varphi$};

\node at (0.35,0.4) {\small $S_1^\vee$}; 
\node at (0.35+\step,0.4) {\small $T_1^\vee$}; 
\node at (0.3+2*\step,0.4) {\small $S_1$}; 
\node at (0.3+3*\step,0.4) {\small $T_1$}; 
\node at ({3/2 * \step},{\h1+1.85}) {\small $H$};
\node at (-0.25-\dx,\h1-0.5) {\small $U$}; 
\node at (\step-0.25-\dx,\h1-0.5) {\small $U$}; 
\node at (2*\step-0.25-\dx,\h1-0.5) {\small $U$}; 
\node at (3*\step-0.25-\dx,\h1-0.5) {\small $U$}; 

\node at (\off-0.35,0.4) {\small $S_g^\vee$}; 
\node at (\off-0.35+\step,0.4) {\small $T_g^\vee$}; 
\node at (\off-0.3+2*\step,0.4) {\small $S_g$}; 
\node at (\off-0.3+3*\step,0.4) {\small $T_g$}; 
\node at ({\off+3/2 * \step},{\h1+1.85}) {\small $H$};
\node at (\off-0.25-\dx,\h1-0.5) {\small $U$}; 
\node at (\off+\step-0.25-\dx,\h1-0.5) {\small $U$}; 
\node at (\off+2*\step-0.25-\dx,\h1-0.5) {\small $U$}; 
\node at (\off+3*\step-0.25-\dx,\h1-0.5) {\small $U$}; 

\node at ({(3*\step+1/2*\step+\off-1/2*\step)/2},\h2) {\bf \dots}; 
\node at ({(3*\step+1/2*\step+\off-1/2*\step)/2},{\h1+0.3}) {\bf \dots}; 
\end{tikzpicture}
$$

\caption{a) The action of $B_p$ on $\psi^{-1}(\varphi)$ for $\varphi \in\Cc_r(\one,H^{\otimes g})$. b) The corresponding action of $\tilde B_p$ on $\Cc_r(\one,H^{\otimes g})$. }
\label{fig:Zr-example-Bp}
\end{figure}

We would now like to compute the string-net space for a closed and unpunctured surface $\Sigma_g$ of genus $g$ (since in this case there is no marking we write $\Sigma_g$ also for the corresponding marked surface). We will do this by applying Theorems~\ref{thm:punctured-to-Hom} and~\ref{thm:remove-puncture}. We have $H(\Sigma_g) = \Cc(\one, H^{\otimes g})$ with
\be\label{eq:HSig-Zr}
	H^{\otimes g} = \Big(\bigoplus_{s,t \in \Zb_r} (\Cb_s)^\vee \otimes (\Cb_t)^\vee \otimes \Cb_s \otimes\Cb_t \Big)^{\otimes g} 
	\cong 
	\big((\Cb_0)^{ r^2}\big)^{\otimes g} \cong (\Cb_0)^{ r^{2g}} \ .
\ee

The action of the idempotent $B_p$ on $\SN(\Sigma_g-\{p\})$ from \eqref{eq:idem-Bp} and of $\tilde B_p$ on $\Cc(\one,H^{\otimes g})$ from \eqref{eq:tilde-proj-def} are given in Figure~\ref{fig:Zr-example-Bp}. 

\begin{lemma}
For $\varphi \in\Cc_r(\one,H^{\otimes g})$ we have
\be
	\tilde B_p(\varphi) = \begin{cases} 
		\varphi &; \text{ $2-2g$ divisible by $r$ ,}\\
		0 &;\text{ else.}
		\end{cases}
\ee
\end{lemma}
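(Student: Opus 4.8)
The plan is to evaluate the string diagram in Figure~\ref{fig:Zr-example-Bp}\,(b) explicitly in $\Cc_r$, using that every object appearing is a one-dimensional graded vector space, so that all the coend maps $\jmath$ reduce to projections onto the graded components and the only data that survive are roots of unity $\zeta$ attached to cups, caps and duals. First I would note that since $H^{\otimes g}\cong(\Cb_0)^{r^{2g}}$ by \eqref{eq:HSig-Zr}, and each simple summand of $H$ is indexed by a pair $(s,t)\in\Zb_r^2$, it suffices to compute $\tilde B_p$ on a morphism $\varphi$ that picks out a single basis vector of the summand labelled $(s_1,t_1,\dots,s_g,t_g)$. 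The claim is then that $\tilde B_p$ acts as multiplication by the scalar
\be
\lambda \;=\; \sum_{U\in\Ic}\frac{\dim_r(U)}{\Dim(\Cc_r)}\,\big(\text{scalar from the closed diagram with edge }U\big),
\ee
and I would show this scalar equals $\sum_{u\in\Zb_r}\tfrac1r\,\zeta^{(2-2g)u}$, which is $1$ if $r\mid 2-2g$ and $0$ otherwise.

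**Key steps, in order.**

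First, unwind the diagram: the $U$-loop runs once through each of the $g$ handle-boxes $\jmath$ and once through the vertex $\varphi$, and the $S_j,T_j$ strings are forced by $\varphi$ to carry the chosen grades. In a graded vector space the morphism $\jmath_{W_{f_{j,3}},W_{f_{j,4}}}$ composed with the gluing isomorphisms just re-identifies the $U$-strand with its dual twice; each such identification, together with the cups/caps produced when the $U$-loop is closed off and slid past the $S_j,T_j$ strands, contributes a definite power of $\zeta$. Second, carefully bookkeep these powers: the $U$-loop encircling the handle contributes $\zeta$ to a power determined by how many right-duality units/counits (each worth $\zeta^{\pm u}$ on $\Cb_u$) appear, and the net exponent per handle should come out to $-2u$ (this is where the non-sphericality $\dim_r=\zeta^u\ne\zeta^{-u}=\dim_l$ enters). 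Third, the vertex $\varphi$ together with the outer cap closing the $U$-loop over the whole polygon contributes the "Euler characteristic" correction $\zeta^{2u}$ (the $2$ in $2-2g$), matching the sphere computation in the proof of Proposition~\ref{prop:SN-S2-1d-or-0d}. Fourth, assemble: the $U=\Cb_u$ term gives $\tfrac{\zeta^u}{r}\cdot\zeta^{-2gu}\cdot\zeta^{u}=\tfrac1r\zeta^{(2-2g)u}$, and summing over $u\in\Zb_r$ gives the geometric sum $\tfrac1r\sum_{u}\zeta^{(2-2g)u}$, which by \eqref{eq:zeta-primitive} is $1$ exactly when $r\mid(2-2g)$ and $0$ otherwise. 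Finally, observe that because the scalar is independent of $\varphi$, $\tilde B_p$ is $\lambda\cdot\id$ on all of $\Cc_r(\one,H^{\otimes g})$.

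**Main obstacle.**

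The delicate part is the sign/exponent bookkeeping in the second and third steps: tracking exactly which right-duality maps (carrying $\zeta^{\pm u}$) versus left-duality maps (carrying $1$) are produced when the $U$-loop from $B_p$ is dragged through the $g$ handle-coends and then closed around $p$, and verifying that the total exponent is $(2-2g)u$ rather than, say, $(2g-2)u$ or something off by the order of a dual. I would pin this down by doing the genus-$0$ case first (where it must reproduce $\sum_u\tfrac1r\zeta^{2u}$, consistent with $\dim\SN(S^2,\Cc_r)=\delta_{r\in\{1,2\}}$ via Proposition~\ref{prop:SN-S2-1d-or-0d} — indeed $r\mid 2$ iff $r\in\{1,2\}$), then the genus-$1$ case (which must give $1$ for all $r$, consistent with Proposition~\ref{prop:SN-basis-torus} and $H(\Sigma_1)$ having dimension $r^2=$ number of simples in $\Zc(\Cc_r)$), and then check that each additional handle multiplies the per-$u$ contribution by exactly $\zeta^{-2u}$. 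Once those anchor cases and the inductive step on handles are verified, the general formula follows, and the subsequent Proposition~\ref{prop:SN-Zr-genusg} is immediate from Theorem~\ref{thm:remove-puncture} together with $\dim\operatorname{im}\tilde B_p=r^{2g}$ or $0$.
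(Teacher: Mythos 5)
Your strategy is sound and lands on the correct scalar $\tfrac1r\sum_{u\in\Zb_r}\zeta^{(2-2g)u}$, but it is organised differently from the paper's argument. You propose a skeletal computation: decompose $H^{\otimes g}$ into its graded summands, evaluate the diagram of Figure~\ref{fig:Zr-example-Bp}\,(b) on basis vectors, and pin down the per-handle factor $\zeta^{-2u}$ by anchoring at $g=0$ (reproducing the sphere scalar $\tfrac1r\sum_u\zeta^{2u}$) and $g=1$, then inducting over handles. The paper instead proves the exponent count once and uniformly: it inserts, at each of the $4g$ places where the $\Cb_u$-loop runs parallel to a strand, the identity on $\Cb_u\otimes X\otimes(\Cb_u)^\vee$ rewritten via the symmetric braiding as $\zeta^{-u}$ times a cap--cup (giving $\zeta^{-4gu}$), and then uses dinaturality of $\jmath$ to trade the coevaluations crossing $S_i^\vee$ for evaluations crossing $S_i$, producing $2g$ right dimensions $\zeta^{2gu}$; together with the loop around $\varphi$ and the prefactor $\dim_r(\Cb_u)$ this gives $\zeta^{(2-2g)u}$ without any case analysis in $g$. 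What your route buys is elementariness and built-in consistency checks against Propositions~\ref{prop:SN-S2-1d-or-0d} and~\ref{prop:SN-basis-torus}; what the paper's route buys is that the single $U$-loop threading all $g$ handle blocks and $\varphi$ is handled in one stroke, so one never has to argue separately that the handle contributions factorise. Be aware that in your write-up the crucial assertion (``the net exponent per handle should come out to $-2u$'') is still only announced, not derived: to complete the proof you must actually perform that bookkeeping, either by the explicit graded-basis evaluation you describe (where the factorisation over handles holds because every strand carries a fixed degree and all morphisms are scalars) or by the braiding-plus-dinaturality identity the paper uses.
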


\begin{proof}
Using the symmetric braiding of $\Cc_r$ we can write the following identity on endomorphisms of $\Cb_u \otimes X \otimes (\Cb_u)^\vee$ in $\Cc_r$,
\be
\begin{tikzpicture}[baseline=3.2em]
\coordinate (vv) at (-0.7,0);
\coordinate (y) at (0,0);
\coordinate (v) at (0.7,0);
\coordinate (vvt) at ([shift={(0,2.5)}]vv);
\coordinate (yt) at ([shift={(0,2.5)}]y);
\coordinate (vt) at ([shift={(0,2.5)}]v);
\draw[very thick,black] (v) -- (vt);
\draw[very thick,black] (y) -- (yt);
\draw[very thick,black] (vv) -- (vvt);
\node[below] at (vv) {\small $\Cb_u$};
\node[below] at (y) {\small $X$};
\node[below] at ([shift={(0.15,0.05)}]v) {\small $(\Cb_u)^\vee$};
\node[above] at (vvt) {\small $\Cb_u$};
\node[above] at (yt) {\small $X$};
\node[above] at ([shift={(0.15,-0.08)}]vt) {\small $(\Cb_u)^\vee$};
\end{tikzpicture}
=~
\zeta^{-u} ~~
\begin{tikzpicture}[baseline=3.2em]
\coordinate (vv) at (-0.7,0);
\coordinate (y) at (0,0);
\coordinate (v) at (0.7,0);
\coordinate (vvt) at ([shift={(0,2.5)}]vv);
\coordinate (yt) at ([shift={(0,2.5)}]y);
\coordinate (vt) at ([shift={(0,2.5)}]v);
\draw[very thick,black] (y) -- (yt);
\begin{scope}[very thick,black,decoration={markings,mark=at position 0.6 with {\arrow{<}}}]
\draw [postaction={decorate}] (v) -- ([shift={(0,0.2)}]v) .. controls ++(0,0.5) and ++(0,0.6) .. ([shift={(0,1)}]vv) -- (vv);
\end{scope}
\begin{scope}[very thick,black,decoration={markings,mark=at position 0.6 with {\arrow{<}}}]
\draw [postaction={decorate}] (vvt) -- ([shift={(0,-0.2)}]vvt) .. controls ++(0,-0.5) and ++(0,-0.6) .. ([shift={(0,-1)}]vt) -- (vt);
\end{scope}
\node[below] at (vv) {\small $\Cb_u$};
\node[below] at (y) {\small $X$};
\node[below] at ([shift={(0.15,0.05)}]v) {\small $(\Cb_u)^\vee$};
\node[above] at (vvt) {\small $\Cb_u$};
\node[above] at (yt) {\small $X$};
\node[above] at ([shift={(0.15,-0.08)}]vt) {\small $(\Cb_u)^\vee$};
\end{tikzpicture}
\quad .
\ee
Inserting this identity $4g$ times in Figure~\ref{fig:Zr-example-Bp}\,(b) results in a factor of $\zeta^{-4gu}$. One then moves the coevaluations crossing $S_i^\vee$, $i=1,\dots,g$ to an evaluation crossing $S_i$ using dinaturality of $\jmath$. Together with the coevaluation already crossing $S_i$ this gives $2g$ times the right dimension of $\Cb_u$, that is, a factor of $\zeta^{2gu}$. The $\Cb_u$ loop enclosing $\varphi$ contributes another $\zeta^u$, as does the prefactor $\dim_r(\Cb_u)$ in the sum in Figure~\ref{fig:Zr-example-Bp}\,(b). The result is
\be
	\tilde B_p(\varphi) = \sum_{u \in \Zb_r} \tfrac{1}r \, \zeta^{(-2g+2)u} \, \varphi \ ,
\ee
which together with \eqref{eq:zeta-primitive} completes the proof.
\end{proof}

Combining this lemma with 
Theorems~\ref{thm:punctured-to-Hom} and~\ref{thm:remove-puncture}
shows:

\begin{proposition}\label{prop:SN-Zr-genusg}
If $2-2g$ is divisible by $r$, then the map 
\be
	\pi \circ \psi^{-1} : \Cc_r(\one,H^{\otimes g}) \longrightarrow \SN(\Sigma_g,\Cc_r)
\ee 
is an isomorphism. If $2-2g$ is not divisible by $r$, then $\SN(\Sigma_g,\Cc_r) = \{0\}$.
\end{proposition}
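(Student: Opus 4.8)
The plan is to assemble the three results that have just been established: the isomorphism $\psi$ of Theorem~\ref{thm:punctured-to-Hom}, which for the closed genus $g$ surface reads $\psi : \SN(\Sigma_g - \{p\}) \xrightarrow{\sim} H(\Sigma_g) = \Cc_r(\one, H^{\otimes g})$ (there being no boundary components, hence no $A(V_{(i)})$-factors); the isomorphism $\pi : \mathrm{im}(B_p) \xrightarrow{\sim} \SN(\Sigma_g)$ of Theorem~\ref{thm:remove-puncture}; and the evaluation of $\tilde B_p$ in the preceding lemma. The only thing to watch is the bookkeeping of map directions, so that $\pi \circ \psi^{-1}$ itself — and not its inverse — comes out as the claimed isomorphism.

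Concretely, I would first recall from \eqref{eq:tilde-proj-def} that $\tilde B_p = \psi \circ B_p \circ \psi^{-1}$ as idempotents on $\Cc_r(\one, H^{\otimes g})$. By the preceding lemma, $\tilde B_p = \mathrm{id}_{\Cc_r(\one,H^{\otimes g})}$ when $r \mid 2-2g$ and $\tilde B_p = 0$ otherwise; since $\psi$ is an isomorphism, conjugating back gives $B_p = \mathrm{id}_{\SN(\Sigma_g - \{p\})}$, respectively $B_p = 0$, on $\SN(\Sigma_g - \{p\})$. Hence $\mathrm{im}(B_p) = \SN(\Sigma_g - \{p\})$ in the first case and $\mathrm{im}(B_p) = \{0\}$ in the second. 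Applying Theorem~\ref{thm:remove-puncture} then finishes both cases: when $r \mid 2-2g$ the map $\pi : \SN(\Sigma_g - \{p\}) \to \SN(\Sigma_g)$ is an isomorphism, and precomposing with the isomorphism $\psi^{-1}$ shows $\pi \circ \psi^{-1}$ is an isomorphism; when $r \nmid 2-2g$ one gets $\SN(\Sigma_g,\Cc_r) \cong \mathrm{im}(B_p) = \{0\}$.

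I expect no serious obstacle here — the real content sits in Theorems~\ref{thm:punctured-to-Hom} and~\ref{thm:remove-puncture} and in the explicit computation of $\tilde B_p$ via the symmetric braiding of $\Cc_r$ in the preceding lemma. The only points worth a careful second look are that the identification $H(\Sigma_g) = \Cc_r(\one, H^{\otimes g})$ with $H^{\otimes g} \cong (\Cb_0)^{r^{2g}}$ from \eqref{eq:HSig-Zr} is used consistently, and that the idempotent $B_p$ of \eqref{eq:idem-Bp} is indeed the one whose conjugate $\tilde B_p$ was just evaluated, so that the chain $\SN(\Sigma_g) \cong \mathrm{im}(B_p) = \SN(\Sigma_g - \{p\}) \cong \Cc_r(\one,H^{\otimes g})$ is legitimate.
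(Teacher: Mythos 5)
Your proposal is correct and follows exactly the paper's argument: the paper derives the proposition by combining the lemma evaluating $\tilde B_p$ with Theorems~\ref{thm:punctured-to-Hom} and~\ref{thm:remove-puncture}, which is precisely the chain $\SN(\Sigma_g) \cong \mathrm{im}(B_p) \subseteq \SN(\Sigma_g-\{p\}) \cong \Cc_r(\one,H^{\otimes g})$ you spell out, with the image being everything or zero according to whether $r$ divides $2-2g$.
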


The condition on $g$ in the above proposition is precisely the condition for an $r$-spin structure to exist on $\Sigma_g$. This indicates a connection between string-nets for $\Cc_r$ and $r$-spin structures which we start to explore in the next section.

\section{Application: $r$-spin structures}\label{sec:r-spin}

As above, let $\Sigma_g$ denote a closed surface of genus $g$. Let $r \in \Zb_{>0}$ and write $\R^r(\Sigma_g)$ for the set of isomorphism classes of $r$-spin structures on $\Sigma_g$ (see e.g.\ \cite[Sec.\,2.1]{Runkel:2018feb} or \cite[Sec.\,3]{Novak:2015phd} for more details on $r$-spin structures). Their number is
\be
\label{eq:r-spin-count}
	|\R^r(\Sigma_g)| = \begin{cases} 
		r^{2g} &; 2-2g \text{ divisible by $r$}\\
		0 &;\text{ else}
	\end{cases}
\ee
Comparing \eqref{eq:r-spin-count} to Proposition~\ref{prop:SN-Zr-genusg} and to \eqref{eq:HSig-Zr} shows that
\be
	\dim\!\big(\, \SN(\Sigma_g,\Cc_r) \big) = |\R^r(\Sigma_g)|
\ee
holds for all $g$. In fact, we can do better than just comparing dimensions. 
Denote by $\VR^r(\Sigma_g)$ the $\Cb$-vector space freely spanned by $\R^r(\Sigma_g)$. The mapping class group of $\Sigma_g$ acts by push-forward on $r$-spin structures, that is, we have an action of $\MCG(\Sigma_g)$ on $\VR^r(\Sigma_g)$. One can compare this to the action on string-net spaces from Remark~\ref{rem:mcg-action}. The result is:

\begin{theorem}\label{thm:MCG-acts-on-subspace}
	$\VR^r(\Sigma_g)$ and $\SN(\Sigma_g,\Cc_r)$ are equivalent as $\MCG(\Sigma_g)$-representations.
\end{theorem}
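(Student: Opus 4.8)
The plan is to construct an explicit $\MCG(\Sigma_g)$-equivariant isomorphism between $\VR^r(\Sigma_g)$ and $\SN(\Sigma_g,\Cc_r)$ by identifying both sides with the same combinatorial object built from the polygon presentation of $\Sigma_g$ used in Section~\ref{sec:string-compute}. First I would recall that by Proposition~\ref{prop:SN-Zr-genusg} we may assume $2-2g$ is divisible by $r$ (otherwise both spaces are zero and there is nothing to prove), and that in this case $\pi\circ\psi^{-1}:\Cc_r(\one,H^{\otimes g})\to\SN(\Sigma_g,\Cc_r)$ is an isomorphism. The next step is to produce a basis of $\Cc_r(\one,H^{\otimes g})$ indexed by something that manifestly looks like an $r$-spin structure. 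Since $H^{\otimes g}=\bigl(\bigoplus_{s,t\in\Zb_r}(\Cb_s)^\vee\otimes(\Cb_t)^\vee\otimes\Cb_s\otimes\Cb_t\bigr)^{\otimes g}$, a natural basis of $\Cc_r(\one,H^{\otimes g})$ is indexed by tuples $(s_1,t_1,\dots,s_g,t_g)\in(\Zb_r\times\Zb_r)^g\cong\Zb_r^{2g}$; I would read off these tuples as the holonomies of an $r$-spin structure along the standard generators $a_i,b_i$ of $H_1(\Sigma_g;\Zb)$ (or rather along a framing-adapted set of loops), using the standard fact that $r$-spin structures on $\Sigma_g$ form a torsor over $H^1(\Sigma_g;\Zb_r)\cong\Zb_r^{2g}$, with the consistency condition coming precisely from $2-2g\equiv 0 \bmod r$ (this is the content of \eqref{eq:r-spin-count}, and is worked out in the references \cite{Runkel:2018feb,Novak:2015phd}).

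The heart of the argument is then to check that this bijection of bases intertwines the two $\MCG(\Sigma_g)$-actions. The plan is to use a presentation of $\MCG(\Sigma_g)$ by Dehn twists (e.g. the Humphries or Lickorish generators), and to verify equivariance one generator at a time. On the $r$-spin side, the push-forward action of a Dehn twist $T_\gamma$ along a simple closed curve $\gamma$ is the classical formula: it shifts the holonomy of a loop $\delta$ by $\langle\gamma,\delta\rangle$ times the holonomy of $\gamma$ (suitably corrected by the framing of $\gamma$), an affine-linear transformation of the torsor $\R^r(\Sigma_g)$. On the string-net side, I would represent a Dehn twist by an explicit homeomorphism of $\Sigma_g$, push the chosen basis graph \eqref{eq:psi-1(hX)}-style graph forward, and then reduce the resulting coloured graph back to standard form using the local relations (sliding strands, using that all quantum dimensions are roots of unity, and dinaturality of $\jmath$ exactly as in the proof of Theorem~\ref{thm:punctured-to-Hom} and the Lemma preceding Proposition~\ref{prop:SN-Zr-genusg}). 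Because $\Cc_r$ has invertible simple objects and $\Zb_r$ fusion rules, a strand of grade $u$ crossing a handle graph labelled $(s,t)$ simply multiplies a phase and relabels — so the bookkeeping collapses to exactly the same affine-linear shift in $\Zb_r^{2g}$. Matching these two formulas on each generator, together with the fact that the generators suffice, gives the equivalence of representations.

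The main obstacle I expect is the careful matching of conventions between the two sides: getting the framing/quadratic-refinement correction in the $r$-spin push-forward formula to agree with the phase factors $\zeta^{\pm u}$ that appear when a strand is slid across a vertex or around a handle in $\Cc_r$ (these are governed by the non-spherical pivotal structure, cf. the computation in the proof of the Lemma before Proposition~\ref{prop:SN-Zr-genusg}). A clean way to sidestep much of this is to work not directly with abstract Dehn twists but with the combinatorial description of $r$-spin structures as $\Zb_r$-valued "edge labellings modulo admissible moves" on a fixed triangulation or polygon, for which there is a known MCG-action by polygon moves; one then observes that the string-net basis graphs are naturally indexed by exactly such labellings of the polygon $P$ of Figure~\ref{fig:surface-polygon}, and that the local string-net relations that identify two basis graphs are precisely the admissible $r$-spin moves. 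If this identification of ``local relations = admissible moves'' can be made precise — which I believe is the real technical content — then equivariance is automatic because both actions are, by construction, induced from the same action on the underlying cell structure. I would therefore organise the proof around: (1) reduce to the divisible case; (2) build the basis bijection $\Zb_r^{2g}\leftrightarrow$ holonomies; (3) identify string-net local relations with $r$-spin admissibility moves so that both become the permutation module on a common combinatorial set; (4) conclude equivariance. Step (3) is where I would spend the bulk of the effort.
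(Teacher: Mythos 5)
Your second route (labelled polygons with $\Zb_r$ edge indices modulo moves) is indeed the direction the paper takes, but as formulated it has a genuine gap: the string-net space is a quotient of $\VGraph(\Sigma_g)$ by \emph{linear} relations, not a set of graphs modulo a combinatorial equivalence, so "local relations $=$ admissible moves" cannot turn both sides into "the same permutation module" and equivariance is not automatic from that slogan. What one actually needs is (a) a concrete rule turning an edge-labelled polygon into a coloured graph — in the paper this requires extra algebraic input, namely the $\Delta$-separable Frobenius algebra $F=\Cb\Zb_r$ of Example~\ref{ex:Zr-group-alg} and powers of its Nakayama automorphism, via \eqref{eq:C-graph-for-marked-PLCW} and Figure~\ref{fig:coloured-graph-for-r-spin}; (b) the statement that labellings related by the moves of \eqref{eq:marked-PLCW-vs-rspin} give the \emph{same} string-net class (only this implication is needed, and it is Lemma~\ref{lem:hat-sig-dep-on-spin}); and (c) a proof that the images of the $r^{2g}$ distinct $r$-spin structures stay linearly independent in the quotient — nothing in the "moves" picture gives this, and the paper establishes it by an explicit computation: under $\SN(\Sigma_g,\Cc_r)\cong\Cc_r(\one,H^{\otimes g})$ (Proposition~\ref{prop:SN-Zr-genusg}) the image of the structure with edge indices $(s_{i,1},s_{i,2})$ is a vector built from $v_{a,b}=\sum_{s,t}\zeta^{sa+tb}\,1^*_{-s}\otimes 1^*_{-t}\otimes 1_s\otimes 1_t$, i.e.\ a discrete-Fourier-type basis. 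This last point also undercuts your step (2): the equivariant identification does \emph{not} send an $r$-spin structure to the standard graded basis vector of $H^{\otimes g}$ labelled by its holonomies, but to its Fourier transform, so matching bases by labels and then checking Dehn-twist formulas generator by generator (your first route) would either fail or force you to rediscover this twist; it is also a much heavier computation than needed, since you would have to control the framing corrections in the $r$-spin Dehn-twist action and the corresponding $\zeta$-phases in the string-net push-forward for every generator.

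For comparison, the paper distributes the difficulty differently: it builds a map $\sigma_F:\VR^r(\Sigma_g)\to\SN(\Sigma_g)$ from the PLCW data by purely local rules, so $\MCG(\Sigma_g)$-equivariance is immediate (Proposition~\ref{prop:r-spin-to-string-net}) because all three actions are push-forwards and the construction commutes with them; no Dehn-twist presentation is ever used. The work then sits in two places your plan does not cover: well-definedness on isomorphism classes (Lemma~\ref{lem:hat-sig-dep-on-spin}, itself only sketched with references) and the isomorphism property, proved by evaluating $\sigma_F$ on the one-face decomposition, where admissibility \eqref{eq:edge-index-admissible} reproduces the condition $2-2g\equiv 0 \bmod r$ and the images $\beta(s_{1,1},\dots,s_{g,2})$ are shown to be a basis of $\Cc_r(\one,H^{\otimes g})$. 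If you want to salvage your outline, replace step (3) by exactly these two ingredients: a local, push-forward-compatible assignment of coloured graphs to labelled decompositions (this is where the Frobenius algebra and Nakayama automorphism are unavoidable), plus the linear-independence computation; the reduction to the divisible case and the dimension count from \eqref{eq:r-spin-count} can stay as you have them.
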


The proof requires a bit of preparation and we give it at the end of this section. Before that, let us make another observation:
Denote by $\MCG_r(\Sigma_g)$ the subgroup of $\MCG(\Sigma_g)$ that leaves all $r$-spin structures on $\Sigma_g$ invariant, i.e.\ the kernel of the representation $\MCG(\Sigma_g) \to GL(\VR^r)$. As shown in \cite{Sipe:1986}, the quotient $Q_r := \MCG(\Sigma_g)/\MCG_r(\Sigma_g)$ is finite and is an extension of $(2\Zb_r)^{2g}$ by the symplectic group $Sp(g,\Zb_r)$, where $2\Zb_r$ denotes the subgroup of $\Zb_r$ generated by $2$. By construction, the quotient acts faithfully on $\VR^r$, and so we get the following corollary to Theorem~\ref{thm:MCG-acts-on-subspace}.

\begin{corollary}
The $\MCG(\Sigma_g)$-action on $\SN(\Sigma_g,\Cc_r)$ factors through $Q_r$, and $Q_r$ acts faithfully on $\SN(\Sigma_g,\Cc_r)$.
\end{corollary}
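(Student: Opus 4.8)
The corollary follows formally from Theorem~\ref{thm:MCG-acts-on-subspace} together with the cited results of \cite{Sipe:1986}, so the plan is essentially to chase the definitions through the isomorphism of $\MCG(\Sigma_g)$-representations. First I would record that, by Theorem~\ref{thm:MCG-acts-on-subspace}, there is an isomorphism $\Phi : \VR^r(\Sigma_g) \xrightarrow{\sim} \SN(\Sigma_g,\Cc_r)$ of $\MCG(\Sigma_g)$-representations. This means that for every $\gamma \in \MCG(\Sigma_g)$ the diagram relating the push-forward action on $\VR^r$ and the action $\SN(\gamma)$ from Remark~\ref{rem:mcg-action} commutes, i.e. $\SN(\gamma) \circ \Phi = \Phi \circ \gamma_*$. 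Consequently the two representations have the same kernel: $\gamma$ acts trivially on $\SN(\Sigma_g,\Cc_r)$ if and only if $\gamma_*$ acts trivially on $\VR^r(\Sigma_g)$, which by definition of $\MCG_r(\Sigma_g)$ happens precisely when $\gamma \in \MCG_r(\Sigma_g)$.

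From this the first assertion of the corollary is immediate: the homomorphism $\MCG(\Sigma_g) \to GL(\SN(\Sigma_g,\Cc_r))$ has kernel exactly $\MCG_r(\Sigma_g)$, hence it factors through the quotient $Q_r = \MCG(\Sigma_g)/\MCG_r(\Sigma_g)$, yielding a well-defined homomorphism $\bar\rho : Q_r \to GL(\SN(\Sigma_g,\Cc_r))$. For the second assertion, faithfulness of $\bar\rho$ is equivalent to the statement that the induced map on the quotient has trivial kernel; but by the kernel computation above, an element $\gamma\MCG_r(\Sigma_g) \in Q_r$ lies in $\ker\bar\rho$ iff $\gamma$ acts trivially on $\SN$, iff $\gamma \in \MCG_r(\Sigma_g)$, iff $\gamma\MCG_r(\Sigma_g)$ is the identity coset. (Alternatively, one may simply quote that $Q_r$ acts faithfully on $\VR^r$ by construction — it is the quotient by the kernel of that action — and transport faithfulness across the isomorphism $\Phi$.) That $Q_r$ is finite, and in particular that the quotient is an extension of $(2\Zb_r)^{2g}$ by $Sp(g,\Zb_r)$, is exactly the content of \cite{Sipe:1986}, which I would cite rather than reprove.

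There is essentially no obstacle here: the whole content of the corollary is packaged into Theorem~\ref{thm:MCG-acts-on-subspace} and the structural description of $Q_r$ from \cite{Sipe:1986}. The only point requiring a line of care is to make sure the notion of ``acting trivially'' is the same on both sides, i.e. that $\Phi$ is an honest isomorphism of representations (not merely of the underlying spaces), which is precisely what Theorem~\ref{thm:MCG-acts-on-subspace} asserts; everything else is a formal diagram chase through the first isomorphism theorem for group actions.

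\begin{proof}
By Theorem~\ref{thm:MCG-acts-on-subspace} there is an isomorphism of $\MCG(\Sigma_g)$-representations $\Phi : \VR^r(\Sigma_g) \to \SN(\Sigma_g,\Cc_r)$, so that for every $\gamma \in \MCG(\Sigma_g)$ we have $\SN(\gamma)\circ\Phi = \Phi\circ\gamma_*$, where $\gamma_*$ is the push-forward action on $r$-spin structures. Hence $\SN(\gamma) = \id$ if and only if $\gamma_* = \id$, i.e.\ if and only if $\gamma \in \MCG_r(\Sigma_g)$. Therefore the homomorphism $\MCG(\Sigma_g) \to GL(\SN(\Sigma_g,\Cc_r))$ of Remark~\ref{rem:mcg-action} has kernel exactly $\MCG_r(\Sigma_g)$ and thus factors through $Q_r = \MCG(\Sigma_g)/\MCG_r(\Sigma_g)$, giving a homomorphism $\bar\rho : Q_r \to GL(\SN(\Sigma_g,\Cc_r))$. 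If $\gamma\MCG_r(\Sigma_g) \in \ker\bar\rho$, then $\SN(\gamma) = \id$, so $\gamma \in \MCG_r(\Sigma_g)$ and the coset is trivial; hence $\bar\rho$ is injective, i.e.\ $Q_r$ acts faithfully on $\SN(\Sigma_g,\Cc_r)$.
\end{proof}
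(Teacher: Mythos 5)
Your proposal is correct and follows the same route as the paper: the paper defines $\MCG_r(\Sigma_g)$ as the kernel of the representation on $\VR^r(\Sigma_g)$, notes that $Q_r$ acts faithfully there by construction, and transports this through the isomorphism of $\MCG(\Sigma_g)$-representations from Theorem~\ref{thm:MCG-acts-on-subspace}, citing \cite{Sipe:1986} for the structure of $Q_r$. Your write-up merely makes the kernel comparison and the factorisation explicit, which is exactly the formal content the paper leaves implicit.
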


The proof of Theorem~\ref{thm:MCG-acts-on-subspace} is based on a combinatorial model for $r$-spin structures developed in \cite{Novak:2015phd,Runkel:2018feb}. We briefly review this model, restricting ourselves to the closed surfaces $\Sigma_g$.

A PLCW-decomposition of $\Sigma_g$ is a cell decomposition subject to certain conditions for which we refer to \cite{Kirillov:2012pl} or \cite[Sec.\,2.2]{Runkel:2018feb}. A {\em marked PLCW-decomposition} of $\Sigma_g$ is a PLCW-decomposition together with
\begin{itemize}
\item a choice of orientation for each edge of the decomposition,
\item a choice of preferred edge for each face, giving a total order on the boundary edges of the face (before they are identified in the cell complex),
\item an assignment of {\em edge indices}, that is, a choice of an element $s_e \in \Zb_r$ for each edge $e$.
\end{itemize}	
We will refer to this choice of data as a {\em marking} of a PLCW decomposition. A marking is {\em admissible} if the following condition, whose ingredients we proceed to explain, holds at each vertex $v$,
\be\label{eq:edge-index-admissible}
	\sum_e \hat s_e ~\equiv~ D_v - N_v + 1 \mod r \ .
\ee
The sum is over all edges $e$ that have $v$ as a boundary vertex. The $\hat s_e \in \Zb_r$ are defined as
\begin{align}
\hat{s}_e=
\begin{cases}
-1&\text{ if $e$ starts and ends at $v$,}\\
s_e&\text{ if $e$ is pointing out of $v$,}\\
-1-s_e&\text{ if $e$ is pointing into $v$.}
\end{cases}
\label{eq:modedgeind}
\end{align}
$D_v$ denotes 
the number of faces whose preferred edge has $v$ as its boundary vertex in clockwise direction (with respect to the orientation of the face). $N_v$ is the number of edges starting at $v$ plus the number of edges ending at $v$ (this results in edges starting and ending at $v$ to be counted twice).

Denote by $\mathcal{M}$ the set of admissible markings of a given PLCW-decomposition of $\Sigma_g$. On $\mathcal{M}$ one introduces an equivalence relation ``$\sim$'' which relates markings for different edge orientations and total orders of boundary edges, as well as edge indices which are related by so-called deck transformations, see \cite[Sec.\,2.3]{Runkel:2018feb} for details. The outcome of the combinatorial construction is a bijection \cite[Thm.\,2.13]{Runkel:2018feb} (building on \cite{Novak:2015phd})
\be\label{eq:marked-PLCW-vs-rspin}
	\mathcal{M} / \hspace{-4pt} \sim  ~~ \xrightarrow{~\sim~} ~ \R^r(\Sigma_g) \ .
\ee

Let $\Cc$ be a pivotal fusion category. 
The next step will be to build a $\Cc$-coloured graph in $\Sigma_g$ from a marked PLCW-decomposition. This graph will depend on the choice a $\Delta$-separable Frobenius algebra $F = (F,\mu,\eta,\Delta,\eps)$ in $\Cc$. Our notation for the structure morphisms is 
\begin{align}
\text{product}~~ & \mu : F \otimes F \to F \ , & \text{unit}~~ & \eta : \one \to F \ ,
\nonumber
\\
\text{coproduct}~~ & \Delta : F \to F \otimes F \ , & \text{counit}~~ & \eps : F \to \one \ ,
\end{align}
see e.g.\ \cite{Fuchs:2009} for more on Frobenius algebras in this setting.
$F$ is called {\em $\Delta$-separable} if $\mu \circ \Delta = \id$. 
A Frobenius algebra in a pivotal category possesses a distinguished Frobenius algebra automorphism, the Nakayama automorphism $N : F \to F$ given by (see e.g.\ \cite{Fuchs:2009})
\be\label{eq:Nak-defn}
N = ~
\begin{tikzpicture}[baseline=2.5em,very thick,black]
\coordinate (mu) at (0,0);
\coordinate (eps) at (0,0.8);
\coordinate (eta) at (0,1.6);
\coordinate (Delta) at (0,2.4);
\coordinate (sF) at (-0.5,-1.2);
\coordinate (tF) at (-0.5,3.6);

\draw (mu)--(eps);
\draw (eta)--(Delta);

\draw[fill=white] (eps) circle (0.1);
\draw[fill=white] (eta) circle (0.1);
\draw[fill=black] (mu) circle (0.1);
\draw[fill=black] (Delta) circle (0.1);

\draw[fill=black] ([shift={(1,-0.07)}] Delta) -- ([shift={(1,0.07)}] mu);

\begin{scope}[decoration={markings,mark=at position 0.5 with {\arrow{>}}}]
\draw [postaction={decorate}](Delta)  arc  (170:0:0.5);
\end{scope}

\begin{scope}[decoration={markings,mark=at position 0.5 with {\arrow{<}}}]
\draw [postaction={decorate}](mu)  arc  (-170:0:0.5);
\end{scope}

\draw (sF) .. controls ++(0,0.5) and ++(-0.5,-0.5) .. (mu);
\draw (tF) .. controls ++(0,-0.5) and ++(-0.5,0.5) .. (Delta);

\node[below] at (sF)  {\small $F$};
\node[above] at (tF)  {\small $F$};
\node at ([shift={(-0.25,0.2)}] mu) {\small $\mu$};
\node at ([shift={(-0.25,-0.2)}] Delta) {\small $\Delta$};
\node at ([shift={(-0.3,0)}] eps) {\small $\eps$};
\node at ([shift={(-0.3,0)}] eta) {\small $\eta$};
\end{tikzpicture}
\qquad , \qquad
N^{-1}
=~
\begin{tikzpicture}[baseline=2.5em,very thick,black]
\coordinate (mu) at (0,0);
\coordinate (eps) at (0,0.8);
\coordinate (eta) at (0,1.6);
\coordinate (Delta) at (0,2.4);
\coordinate (sF) at (0.5,-1.2);
\coordinate (tF) at (0.5,3.6);

\draw (mu)--(eps);
\draw (eta)--(Delta);

\draw[fill=white] (eps) circle (0.1);
\draw[fill=white] (eta) circle (0.1);
\draw[fill=black] (mu) circle (0.1);
\draw[fill=black] (Delta) circle (0.1);

\draw[fill=black] ([shift={(-1,-0.07)}] Delta) -- ([shift={(-1,0.07)}] mu);

\begin{scope}[decoration={markings,mark=at position 0.5 with {\arrow{>}}}]
\draw [postaction={decorate}](Delta)  arc  (10:180:0.5);
\end{scope}

\begin{scope}[decoration={markings,mark=at position 0.5 with {\arrow{<}}}]
\draw [postaction={decorate}](mu)  arc  (-10:-180:0.5);
\end{scope}

\draw (sF) .. controls ++(0,0.5) and ++(0.5,-0.5) .. (mu);
\draw (tF) .. controls ++(0,-0.5) and ++(0.5,0.5) .. (Delta);

\node[below] at (sF)  {\small $F$};
\node[above] at (tF)  {\small $F$};
\node at ([shift={(0.25,0.2)}] mu) {\small $\mu$};
\node at ([shift={(0.25,-0.2)}] Delta) {\small $\Delta$};
\node at ([shift={(0.3,0)}] eps) {\small $\eps$};
\node at ([shift={(0.3,0)}] eta) {\small $\eta$};
\end{tikzpicture}
\ee
These string diagrams also introduce the graphical notation for the structure morphisms of $F$ which we will use.

\begin{example}\label{ex:Zr-group-alg}
Consider the case $\Cc=\Cc_r$. Then the group algebra $F = \Cb \Zb_r$ becomes a Frobenius algebra via the coproduct and counit
\be
	\Delta(1_a) = \tfrac{1}{r} \sum_{b \in \Zb_r} 1_{a+b} \otimes 1_{-b}
	\quad , \quad
	\eps(1_a) = r \, \delta_{a,0} \ .
\ee
The normalisation factor ensures that $\mu \circ \Delta=\id$, so that $F$ is $\Delta$-separable. The Nakayama automorphism can now be computed to be
\be
	N(1_a) = \zeta^{-a} \, 1_a \ . 
\ee
Since $\zeta$ is assumed to be primitive, the Nakayama automorphism has order $r$.
\end{example}

Given a marked PLCW-decomposition $\mathbf{m}$ of $\Sigma_g$ we replace faces and edges by the following components of $\Cc$-coloured graphs:
\be\label{eq:C-graph-for-marked-PLCW}
\begin{tikzpicture}[baseline=0em]
\node at (0,0) {\includegraphics[width=9em]{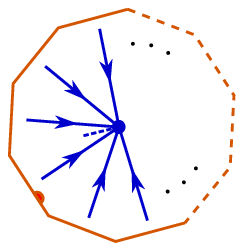}};
\node at (0.5,0.3) {\small $M^{(n)}$};
\node at (-0.55,1.2) {\small $F$};
\node at (0.05,-1.3) {\small $F$};
\end{tikzpicture}
\hspace{5em}
\begin{tikzpicture}[baseline=-.25em]
\coordinate (b) at (0,-1);
\coordinate (t) at (0,1);
\coordinate (l) at (-1,0);
\coordinate (r) at (1.5,0);
\coordinate (c) at (0.5,0);

\draw[very thick,blue!80!black,fill=blue!80!black] (c) circle (0.1);

\begin{scope}[very thick,orange!80!black,decoration={markings,mark=at position 0.7 with {\arrow{>}}}]
\draw [postaction={decorate}](v1)  (b) -- (t);
\end{scope}

\begin{scope}[very thick,blue!80!black,decoration={markings,mark=at position 0.7 with {\arrow{>}}}]
\draw [postaction={decorate}](v1)  (c) -- (r);
\draw [postaction={decorate}](v1)  (c) -- (l);
\end{scope}

\draw[very thick,blue!80!black,dashed] (c) -- ++(90:0.7);

\node at ([shift={(0.05,-0.4)}]c) {\small $E_{s_e}$};
\node at ([shift={(-0.25,-0.4)}]t) {\small $s_e$};
\end{tikzpicture}
~=~
\begin{tikzpicture}[baseline=-.25em]
\coordinate (b) at (0,-1);
\coordinate (t) at (0,1);
\coordinate (l) at (-1,0);
\coordinate (r) at (1.5,0);
\coordinate (c) at (0.5,0);

\draw[very thick,blue!80!black,fill=blue!80!black] (c) circle (0.1);

\begin{scope}[very thick,orange!80!black,decoration={markings,mark=at position 0.7 with {\arrow{>}}}]
\draw [postaction={decorate}](v1)  (b) -- (t);
\end{scope}

\begin{scope}[very thick,blue!80!black,decoration={markings,mark=at position 0.7 with {\arrow{>}}}]
\draw [postaction={decorate}](v1)  (c) -- (r);
\draw [postaction={decorate}](v1)  (c) -- (l);
\end{scope}

\draw[very thick,blue!80!black,dashed] (c) -- ++(-90:0.7);

\node at ([shift={(0.4,0.4)}]c) {\small $E_{-s_e-1}$};
\node at ([shift={(-0.25,-0.4)}]t) {\small $s_e$};
\end{tikzpicture}
\ee
Here, in polygon on the left hand side, the semi-disc attached to one edge indicates the start of the total order, and for the edge on the right hand side, $s_e \in \Zb_r$ denotes the edge index. The morphisms $M^{(n)} \in \Cc(\one,(F^\vee)^{\otimes n})$ and $E_u \in \Cc(\one,F \otimes F)$ are determined in terms of $F$ as follows. Write $\mu^{(n)} : F^{\otimes n} \to F$ for the iterated product of $F$. Then
\be\label{eq:MnEu-explicit}
	M^{(n)} = \big[ \one \xrightarrow{ (\eps \circ \mu^{(n)})^\vee } 
	(F^{\otimes n})^\vee \xrightarrow{\cong} 
	(F^\vee)^{\otimes n} \big]
	\quad , \quad
	E_u = \big[ \one \xrightarrow{ \eta \circ \Delta } F \otimes F 
	\xrightarrow{ N^u \otimes \id } F \otimes F \big]
	\ .
\ee
That the equality on the right hand side in \eqref{eq:C-graph-for-marked-PLCW} holds follows form the explicit form \eqref{eq:Nak-defn} of the Nakayama automorphism.

The above prescription defines an element $\hat\sigma_F(\mathbf{m}) \in \SN(\Sigma_g,\Cc)$ which in fact only depends on the $r$-spin structure defined by the marked PLCW-decomposition $\mathbf{m}$:

\begin{lemma}\label{lem:hat-sig-dep-on-spin}
Suppose two admissible marked PLCW-decompositions $\mathbf{m}$ and $\mathbf{m}'$ define isomorphic $r$-spin structures on $\Sigma_g$ via the assignment \eqref{eq:marked-PLCW-vs-rspin}. Then $\hat\sigma_F(\mathbf{m})=\hat\sigma_F(\mathbf{m}')$.
\end{lemma}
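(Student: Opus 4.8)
The plan is to show that the element $\hat\sigma_F(\mathbf m)\in\SN(\Sigma_g,\Cc)$ is unchanged under each of the elementary moves that generate the equivalence relation ``$\sim$'' on admissible markings, since by \eqref{eq:marked-PLCW-vs-rspin} two marked PLCW-decompositions define isomorphic $r$-spin structures precisely when they are connected by a finite chain of such moves. Concretely, the moves to check fall into three groups: (i) changing the combinatorial marking data at fixed PLCW-decomposition — reversing the orientation of an edge, changing the preferred edge of a face, and applying a deck transformation to the edge indices; (ii) the bistellar (Pachner-type) moves relating different PLCW-decompositions of $\Sigma_g$, namely subdividing an edge, subdividing a face, and their inverses (see \cite{Kirillov:2012pl,Runkel:2018feb}); and (iii) checking that admissibility \eqref{eq:edge-index-admissible} is exactly what is needed for the local string-net relations to absorb the $\hat s_e$-bookkeeping. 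For each move I would draw the two local $\Cc$-coloured graphs built via \eqref{eq:C-graph-for-marked-PLCW}, embed a square $Q$ containing the region where they differ, and verify that the two evaluations $\langle-\rangle_{\mathbf Q}$ agree in $\Cc$; this makes the two graphs equal in $\SN(\Sigma_g,\Cc)$.

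The key computations are all consequences of the Frobenius and $\Delta$-separability axioms together with the defining properties of $N$, $M^{(n)}$ and $E_u$ in \eqref{eq:MnEu-explicit}. For the edge-orientation reversal one uses the identity displayed on the right of \eqref{eq:C-graph-for-marked-PLCW}, i.e.\ that flipping the edge and replacing $s_e\mapsto -s_e-1$ leaves $E_{s_e}$ invariant up to the Nakayama twist, which is exactly the content of $N=$ \eqref{eq:Nak-defn}. For the change of preferred edge of a face one uses cyclic invariance of $\eps\circ\mu^{(n)}$ (a standard consequence of the Frobenius property), so $M^{(n)}$ only depends on the cyclic order, and the shift of the starting point is compensated by the edge-index admissibility at the vertices where the face's boundary is anchored. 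For the edge-subdivision move one inserts $\id_F=\mu\circ\Delta$ ($\Delta$-separability) along the edge and recognises the result as the coloured graph for the subdivided decomposition, with a new bivalent/new edge-index vertex whose admissibility condition holds automatically; for face subdivision one uses associativity of $\mu$ to split $\mu^{(n)}$ into $\mu^{(k)}$ and $\mu^{(n-k+\cdots)}$ joined along an internal $F$-edge. Deck transformations shift the $s_e$ around a cycle and must be checked to leave the product of $E_{s_e}$'s and the admissibility data invariant — here one uses that $N$ is a Frobenius algebra automorphism so that the accumulated Nakayama twists telescope.

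**Main obstacle.** The hard part will be the bookkeeping in group (i)–(iii) taken together: verifying that \emph{admissibility} is precisely the condition making all the Nakayama twists $N^{\hat s_e}$ cancel around each vertex, and that this cancellation is stable under all the moves simultaneously. In other words, the genuinely nontrivial step is to show that the map $\mathbf m\mapsto\hat\sigma_F(\mathbf m)$ is well-defined on $\Mc$ at all (invariance under moves \emph{at fixed} PLCW structure, where the admissibility constraint \eqref{eq:edge-index-admissible} enters through $D_v-N_v+1$), rather than the Pachner-move invariance, which is the familiar Turaev–Viro-style argument. I would isolate this as the core lemma: around a vertex $v$, the product of the local pieces contributed by the incident half-edges and the faces having $v$ as a preferred-edge corner equals $\id$ in $\SN$ iff $\sum_e\hat s_e\equiv D_v-N_v+1$, which I would prove by pushing all $E_u$'s and $M^{(n)}$'s to a small disc around $v$, evaluating via $\langle-\rangle_{\mathbf Q}$, and reading off the power of $N$ (equivalently of $\zeta$ in the model case $\Cc=\Cc_r$, $F=\Cb\Zb_r$ of Example~\ref{ex:Zr-group-alg}) from the Nakayama automorphism. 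Everything else is a routine, if lengthy, check that each generator of $\sim$ from \cite[Sec.\,2.3]{Runkel:2018feb} is realised by a local null-graph relation.
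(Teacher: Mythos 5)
Your strategy coincides with the one the paper intends: the paper reduces the lemma to invariance of $\hat\sigma_F$ under the finite list of moves relating $\mathbf m$ and $\mathbf m'$ from \cite[Lem.\,2.11\,\&\,Prop.\,2.16]{Runkel:2018feb} and then omits the move-by-move verification, pointing to \cite[Sec.\,3]{Runkel:2018feb} and \cite{Novak:2015ela} for analogous computations; your proposal is precisely that verification, carried out by realising each move as a local null-graph relation, which is the right (and essentially the only) route.

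Two points in your outline need repair, though. First, the claim that $\eps\circ\mu^{(n)}$ is cyclically invariant is false for a general Frobenius algebra in a pivotal category: it is cyclic only up to the Nakayama automorphism, and for the algebra actually used here ($F=\Cb\Zb_r$ of Example~\ref{ex:Zr-group-alg}, whose Nakayama automorphism has order $r$) it is genuinely not cyclic. This is not something to be ``compensated by admissibility at the vertices''; rather, in the combinatorial model the move that changes the preferred edge of a face simultaneously shifts the index of the edge that is passed over, and the correct local computation is that rotating the starting leg of $M^{(n)}$ produces exactly one factor of $N^{\pm1}$, which is absorbed by $E_{s_e}\mapsto E_{s_e\pm1}$ via \eqref{eq:MnEu-explicit} (exactly as the orientation-flip move is absorbed by the right-hand identity in \eqref{eq:C-graph-for-marked-PLCW}). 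Second, your ``core lemma'' misplaces the role of admissibility: $\hat\sigma_F(\mathbf m)$ is defined for an arbitrary marking, so there is no well-definedness issue on $\mathcal M$, and the move invariance is a local Frobenius/Nakayama computation in which the moves themselves carry the index changes; the admissibility condition \eqref{eq:edge-index-admissible} is what makes a marking correspond to an $r$-spin structure under \eqref{eq:marked-PLCW-vs-rspin} and is preserved by the moves, rather than being an identity ``product around $v$ equals $\id$ iff \eqref{eq:edge-index-admissible}'' — the ``only if'' direction of your proposed lemma cannot hold for general $F$ (e.g.\ when $N$ has small order). With these corrections, the remaining checks you list (orientation flip, $\Delta$-separability and associativity for the decomposition-changing moves, telescoping of Nakayama twists for deck transformations) are the standard and correct ingredients.
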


The proof uses that a finite number of moves link $\mathbf{m}$ and $\mathbf{m}'$ as detailed in \cite[Lem.\,2.11\,\&\,Prop.\,2.16]{Runkel:2018feb}. One needs to show invariance of $\hat\sigma_F(-)$ under these moves. This has been done in a different setting in \cite[Sec.\,3]{Runkel:2018feb} and in \cite{Novak:2015ela} for $r=2$. We will omit the proof here and give details elsewhere.

So far we have described the solid arrows in the diagram
\be
\begin{tikzcd}
	& \{ \text{marked PLCW-dec.} \} \arrow[dl] \arrow[dr,"\hat\sigma_F"] \\
	\R^r(\Sigma_g) \arrow[rr,dashed,"\sigma_F"] && \SN(\Sigma_g)
\end{tikzcd}
\quad .
\ee
On all three spaces in the diagram the mapping class group acts by push-forward, and by construction the two solid arrows intertwine the action of $\MCG(\Sigma_g)$. Finally, by Lemma~\ref{lem:hat-sig-dep-on-spin} the map $\hat\sigma_F$ descends to $\R^r(\Sigma_g)$. We collect the results of the above discussion in the following proposition.

\begin{proposition}\label{prop:r-spin-to-string-net}
Given a $\Delta$-separable Frobenius algebra in a pivotal fusion category $\Cc$,
the above construction gives a linear map
\be\label{eq:sigmaF-map-def}
	\sigma_F : \VR^r(\Sigma_g) \longrightarrow \SN(\Sigma_g) \ ,
\ee
which is an intertwiner for the $\MCG(\Sigma_g)$-action.
\end{proposition}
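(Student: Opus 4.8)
The plan is to collect all the ingredients that have already been set up and assemble them into the required intertwiner. Concretely, by Lemma~\ref{lem:hat-sig-dep-on-spin} the assignment $\mathbf{m}\mapsto\hat\sigma_F(\mathbf{m})\in\SN(\Sigma_g,\Cc)$ is constant on the fibres of the map $\{\text{marked PLCW-dec.}\}\to\R^r(\Sigma_g)$ from \eqref{eq:marked-PLCW-vs-rspin}. Since that map is surjective (it descends to the bijection $\Mc/{\sim}\xrightarrow{\sim}\R^r(\Sigma_g)$), we obtain a well-defined set map $\R^r(\Sigma_g)\to\SN(\Sigma_g,\Cc)$; extending $\Cb$-linearly over the free vector space on $\R^r(\Sigma_g)$ yields the map $\sigma_F:\VR^r(\Sigma_g)\to\SN(\Sigma_g,\Cc)$ in \eqref{eq:sigmaF-map-def}. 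There is nothing to check for linearity because $\VR^r(\Sigma_g)$ is \emph{freely} spanned by $\R^r(\Sigma_g)$, so any set map out of the basis extends uniquely.

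The one substantive point is equivariance. First I would note that $\MCG(\Sigma_g)$ acts on the set of marked PLCW-decompositions of $\Sigma_g$ by push-forward (a homeomorphism carries a cell decomposition, edge orientations, preferred edges, and edge indices to the corresponding data on the image), and that this action is compatible with the quotient maps to $\R^r(\Sigma_g)$ on one side and, on the other side, with the $\MCG$-action on $\SN(\Sigma_g,\Cc)$ from Remark~\ref{rem:mcg-action}: the latter sends $[\Gamma]\mapsto[\phi(\Gamma)]$, and the $\Cc$-coloured graph built from a marked PLCW-decomposition via \eqref{eq:C-graph-for-marked-PLCW} is manifestly natural under homeomorphisms, since the local building blocks $M^{(n)}$ and $E_u$ depend only on combinatorial data (valences, edge indices, the order coming from the preferred edge) which $\phi$ preserves. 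Hence $\hat\sigma_F(\phi_*\mathbf{m})=\SN(\phi)\big(\hat\sigma_F(\mathbf{m})\big)$ at the level of marked PLCW-decompositions. Passing to the quotient $\R^r(\Sigma_g)$ using Lemma~\ref{lem:hat-sig-dep-on-spin} and then extending linearly gives $\sigma_F\circ\phi_*=\SN(\phi)\circ\sigma_F$ on all of $\VR^r(\Sigma_g)$, which is the claimed intertwining property. This is precisely the content summarised in the commuting triangle preceding the proposition: the two solid arrows intertwine the $\MCG(\Sigma_g)$-action by construction, and the dashed arrow inherits equivariance because it is obtained from them.

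The main (and essentially only) obstacle is the already-invoked Lemma~\ref{lem:hat-sig-dep-on-spin}, i.e.\ the claim that $\hat\sigma_F$ factors through $r$-spin structures; its proof requires checking invariance of the string-net class $\hat\sigma_F(\mathbf{m})$ under the finite generating set of moves on marked PLCW-decompositions from \cite[Lem.\,2.11\,\&\,Prop.\,2.16]{Runkel:2018feb} (changes of edge orientation, of preferred edge, the deck/gauge transformations of edge indices, and the elementary PLCW bistellar moves). Each move corresponds to a local identity in $\Cc$ among the building blocks $M^{(n)}$ and $E_u$ that follows from the Frobenius, $\Delta$-separability, and Nakayama-automorphism relations, evaluated inside an embedded disc and hence valid in $\SN$; this is exactly the sort of computation done in a different setting in \cite[Sec.\,3]{Runkel:2018feb}, and the excerpt explicitly defers the details. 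Given that lemma, the proof of Proposition~\ref{prop:r-spin-to-string-net} is the short bookkeeping argument sketched above.
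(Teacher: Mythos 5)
Your proposal matches the paper's own argument: the paper likewise treats the proposition as a summary of the preceding construction, invoking Lemma~\ref{lem:hat-sig-dep-on-spin} to descend $\hat\sigma_F$ to $\R^r(\Sigma_g)$ (then extending linearly) and noting that $\MCG(\Sigma_g)$ acts by push-forward on all three spaces with the two solid arrows intertwining by construction, so the dashed arrow $\sigma_F$ is equivariant. Your write-up is correct and, if anything, slightly more explicit than the paper about surjectivity onto $\R^r(\Sigma_g)$ and the naturality of the building blocks $M^{(n)}$, $E_u$ under homeomorphisms.
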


\begin{figure}[bt]

\begin{center}
\begin{tikzpicture}[baseline=0em]
\node at (0,0) {\includegraphics[width=18em]{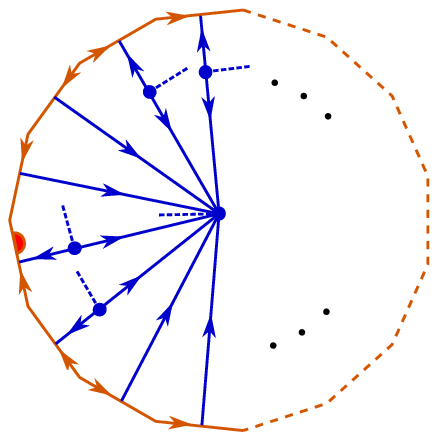}};
\node at (-3.8,0.9) {\small $f_{1,1}$};
\node at (-3.2,2.3) {\small $f_{1,2}$};
\node at (-1.8,3.45) {\small $f_{1,3}$};
\node at (-0.2,3.85) {\small $f_{1,4}$};
\node at (-0.4,-3.85) {\small $f_{g,1}$};
\node at (-1.8,-3.4) {\small $f_{g,2}$};
\node at (-3.4,-2) {\small $f_{g,3}$};
\node at (-3.9,-0.7) {\small $f_{g,4}$};
\node at (-1.5,2) {\small $E_{s_{1,1}}$};
\node at (0.3,2.3) {\small $E_{s_{1,2}}$};
\node at (-2.0,-0.8) {\small $E_{s_{g,2}}$};
\node at (-1.7,-1.95) {\small $E_{s_{g,1}}$};
\node at (0.7,0.3) {\small $M^{(4g)}$};
\end{tikzpicture}
\end{center}
	
	\caption{Marked PLCW-decomposition of $\Sigma_g$ into a single face, $2g$ edges and on vertex. The edges $f_{i,a}$ and $f_{i,a+2}$, $a=1,2$, are identified. The edge sign for the edge $f_{i,1}=f_{i,3}$ is $s_{i,1}$ and that for $f_{i,2}=f_{i,4}$ is $s_{i,2}$. Also shown is the coloured graph corresponding to this decomposition via the rules in \eqref{eq:C-graph-for-marked-PLCW}.}
	\label{fig:coloured-graph-for-r-spin}
\end{figure}

\begin{proof}[Proof of Theorem~\ref{thm:MCG-acts-on-subspace}]
We take $\Cc=\Cc_r$ and $F$ as in Example~\ref{ex:Zr-group-alg}.
In light of Proposition~\ref{prop:r-spin-to-string-net}, it remains to show that for these choices of $\Cc$ and $F$, the map $\sigma_F$ is an isomorphism.

For $\Sigma_g$ we take the PLCW decomposition with one face and with marking as shown in Figure~\ref{fig:coloured-graph-for-r-spin}. To determine the admissible edge indices, we evaluate condition \eqref{eq:edge-index-admissible}. Since each edge starts and ends at $v$, we have $\hat s_e=-1$ for each edge. Furthermore, $D_v = 1$ and $N_v = 4g$, so that $\eqref{eq:edge-index-admissible}$ becomes $-2g \equiv 1 - 4g + 1 \mod r$. Thus if $2-2g \equiv 0 \mod r$, the $2g$ edge indices are unconstrained, and if $2-2g \not\equiv 0 \mod r$, there are no admissible edge indices.

Together with Proposition~\ref{prop:SN-Zr-genusg} and with \eqref{eq:marked-PLCW-vs-rspin}, 
this shows that for $2-2g \not\equiv 0 \mod r$ both sides of \eqref{eq:sigmaF-map-def} are zero-dimensional, proving the claim in this case. We can thus assume that $2-2g \equiv 0 \mod r$.

From Proposition~\ref{prop:SN-Zr-genusg} we know that $\Cc_r(\one,H^{\otimes g}) \cong \SN(\Sigma_g)$. Via this isomorphism, the coloured graph in Figure~\ref{fig:coloured-graph-for-r-spin} gets mapped to the element
\begin{align}
	\beta(s_{1,1},\dots,s_{g,2}) := \big[ 
	&
	\one \xrightarrow{~\eta~} F 
	\xrightarrow{ \chi(s_{1,1},s_{1,2}) } HF
	\xrightarrow{ \id \otimes \chi(s_{2,1},s_{2,2}) } HHF
	\longrightarrow
\nonumber\\	
&
	\cdots
	\xrightarrow{ \id \otimes \chi(s_{g,1},s_{g,2}) } H^{\otimes g}F
	\xrightarrow{ \id \otimes \eps} H^{\otimes g} \big]
\end{align}
where
\be
	\chi(a,b) 
	~=~~ 
\begin{tikzpicture}[baseline=-4em]
\coordinate (yv) at (-1.8,0);
\coordinate (vv) at (-1.1,0);
\coordinate (y) at (0,0);
\coordinate (v) at (0.9,0);
\coordinate (Ft) at (1.9,1.4);
\coordinate (Fb) at (1.9,-5);

\draw[very thick,black] ([shift={(-0.3,0)}]yv) rectangle ++(3.3,0.6) node[pos=.5] {\small $\jmath_{F,F}$};
\draw[very thick,black] ([shift={(-0.35,0.6)}]y) -- ++(0,0.8);

\draw[very thick,black] ([shift={(-0.85,-1)}]y) rectangle ++(1.15,0.6) node[pos=.5] {\footnotesize $N^{-a-1}$};
\draw[very thick,black] (y) -- ++(0,-0.4);
\draw[very thick,black] ([shift={(-0.4,-1)}]v) rectangle ++(1.15,0.6) node[pos=.5] {\footnotesize $N^{-b-1}$};
\draw[very thick,black] (v) -- ++(0,-0.4);

\draw[very thick,black] (Ft) -- (Fb);

\draw[fill=black] ([shift={(0,1)}]Fb) circle (0.1);
\draw[fill=black] ([shift={(0,1.6)}]Fb) circle (0.1);
\draw[fill=black] ([shift={(0,2.2)}]Fb) circle (0.1);
\draw[fill=black] ([shift={(0,2.8)}]Fb) circle (0.1);

\begin{scope}[very thick,black,decoration={markings,mark=at position 0.8 with {\arrow{>}}}]
\draw [postaction={decorate}] (yv) -- ([shift={(0,-3)}]yv) .. controls ++(0,-1) and ++(-0.6,-1) .. ([shift={(0,1)}]Fb);
\draw [postaction={decorate}] (vv) -- ([shift={(0,-2.4)}]vv) .. controls ++(0,-1) and ++(-0.6,-1) .. ([shift={(0,1.6)}]Fb);
\draw ([shift={(0,-1)}]y)  .. controls ++(0,-0.8) and ++(-0.6,0.4) .. ([shift={(0,2.2)}]Fb);
\draw ([shift={(0,-1)}]v)  .. controls ++(0,-0.5) and ++(-0.6,0.4) .. ([shift={(0,2.8)}]Fb);
\end{scope}

\node at ([shift={(0.3,1)}]Fb) {\small $\mu$}; 
\node at ([shift={(0.3,1.6)}]Fb) {\small $\mu$}; 
\node at ([shift={(0.3,2.2)}]Fb) {\small $\Delta$}; 
\node at ([shift={(0.3,2.8)}]Fb) {\small $\Delta$}; 
\node at ([shift={(-0.35,1.65)}]y) {\small $H$}; 
\node at ([shift={(0,0.25)}]Ft) {\small $F$}; 
\node at ([shift={(0,-0.25)}]Fb) {\small $F$}; 
\end{tikzpicture}
\qquad .
\ee
To obtain this expression, one substitutes \eqref{eq:MnEu-explicit} in Figure~\ref{fig:coloured-graph-for-r-spin} and uses the identity on the right hand side of \eqref{eq:C-graph-for-marked-PLCW}.
Substituting the explicit expressions in $\Cc_r$, one finds, for $x \in F$,
\be
	(\chi(a,b))(x) = v_{a,b} \otimes x
	\quad , \quad v_{a,b} = \sum_{s,t \in \Zb_r} \zeta^{sa+tb} 
	\,
	1_{-s}^* \otimes 1_{-t}^* \otimes 1_s \otimes 1_t \in H \ .
\ee
Since $\{v_{a,b}\}_{a,b \in \Zb_r}$ is a basis of $H$, it is easy to see that the $\beta(s_{1,1},\dots,s_{g,2})$ give a basis of $\Cc_r(\one,H^{\otimes g})$ as each $s_{i,j}$ runs over all values in $\Zb_r$.
\end{proof}

\section{Application: background charge}\label{sec:bg-charge}

For a spherical fusion category $\Cc$, it is described in \cite[Sec.\,7]{Kirillov:2011mk} how to extend string-net spaces to include surfaces with marked points. Each marked point is equipped with a tangent vector and is labelled by a pair $(Z,\pm)$, where $Z \in \Zc(\Cc)$. The marked points form one-valent vertices on which edges labelled by $\forget(Z)$ can start or end (depending on the sign).\footnote{
	Actually, in \cite{Kirillov:2011mk} the edges are unoriented, so marked points are just labelled by $Z$, the edge orientation is absorbed into replacing $Z$ by $Z^\vee$. The inclusion of signs is relevant in our setting where edges are oriented.}

In the non-spherical case we can introduce such marked points in the same way. Namely, we replace a marked point labelled by $(Z,+)$ with a circular boundary containing a marked point $(\forget(Z),+)$ and a sum similar to the projector $B_p$ (with $V = U^\vee$, hence the different orientation)
\be
\sum_{V \in \Ic}  \frac{\dim_l(V)}{\Dim(\Cc)} 
~~
\begin{tikzpicture}[baseline=0em]
\coordinate (hb) at (1.3,0);
\coordinate (end) at (2.2,0);

\draw [line width=4pt,gray!60!white] (0,0) circle (0.4);
\draw [very thick,black] (0,0) circle (0.5);
\draw[very thick,blue!80!black,fill=blue!80!black] (0.5,0) circle (0.1);

\begin{scope}[very thick,blue!80!black,decoration={markings,mark=at position 0.6 with {\arrow{>}}}]
\draw [postaction={decorate}]  (hb) -- (0.5,0);
\draw [postaction={decorate}]  (end) -- (hb);
\draw [dashed]  (end) -- ++(0.7,0);
\end{scope}

\draw[very thick,blue!80!black] (hb) circle (0.15);

\begin{scope}[very thick,blue!80!black,decoration={markings,mark=at position 0.3 with {\arrow{<}}}]
\draw [postaction={decorate}](0,1)  arc  (90:270:1);
\draw (0,1) .. controls ++(0.7,0) and ++(0.5,1) .. (hb);
\draw (0,-1) .. controls ++(0.7,0) and ++(-0.5,-1) .. (hb);
\end{scope}

\node at (0,0.75) {\small $V$}; 
\node at ([shift={(0.1,-0.3)}]end) {\small $\forget(Z)$}; 
\end{tikzpicture}
\qquad ,
\ee
where the encircled crossing represents the half-braiding of $Z$ evaluated at $V$.
That this is a valid description of marked points ultimately follows from the analysis of the string-net space of the annulus, cf.\ Remark~\ref{rem:canonical-projector-centre}, but we will not develop this in more detail here. Instead, we take it as motivation for the following construction. 

\medskip

For $X \in \Cc$, denote by $\mathbf{D}(X)$ the marked surface given by a disc with one point on the boundary marked $(X,+)$. We define the {\em string-net space on a sphere with one point marked by $(Z,+)$}, to be the subspace of $\SN(\mathbf{D}(\forget(Z)),\Cc)$ 
given by
\be
\SN(S^2(Z),\Cc) := \mathrm{span}_{\Cb}\big\{ \,
\Psi_\Gamma \,\big| \, \text{$\Gamma$ col.\ graph on $\mathbf{D}(\forget(Z))$} \, \big\} \ .
\ee
Here, $\Psi_\Gamma \in \SN(\mathbf{D}(\forget(Z)),\Cc)$ is given by
\be
~~
\Psi_\Gamma \,=\, 
 \sum_{V \in \Ic} \frac{\dim_l(V)}{\Dim(\Cc)} 
~
\begin{tikzpicture}[baseline=0em]
\coordinate (h) at (0,1.5);
\coordinate (hb) at (0,0.8);
\coordinate (e) at (0,-0.8);

\shade [ball color=orange!50!white] (0,0.1) circle [radius=2.2];
\draw [very thick,black,fill=gray!60!white] (h) ellipse (0.4 and 0.3);
\draw[very thick,blue!80!black,fill=blue!80!black] ([shift={(0,-0.3)}]h) circle (0.1);
\begin{scope}[decoration={markings,mark=at position 0.0 with {\arrow{<}}}]
\draw [very thick,blue!80!black,postaction={decorate}] ([shift={(0,-0.1)}]h) ellipse (0.8 and 0.6);
\end{scope}
\begin{scope}[decoration={markings,mark=at position 0.4 with {\arrow{>}}}]
\draw[very thick,blue!80!black,postaction={decorate}] ([shift={(0,0.8)}]e) -- ([shift={(0,-0.3)}]h);
\end{scope}
\draw[very thick,blue!80!black] (hb) circle (0.15);
\draw [very thick,dashed,blue!80!black,fill=blue!50!white] (e) ellipse (1.1 and 0.8);

\node at ([shift={(1.0,-0.4)}]h) {\small $V$};
\node at ([shift={(0.7,-1.2)}]h) {\small $\forget(Z)$};
\node at (e) {\small $\Gamma$};
\end{tikzpicture}
~~,
\ee
where we presented $\mathbf{D}(\forget(Z))$ as a sphere with a small disc removed and used that any coloured graph can be deformed to lie in the shaded area marked ``$\Gamma$''.
The encircled crossing is the half-braiding $c_{Z,V} : Z V \to VZ$.

From Section~\ref{sec:stringnet} we know that $\SN(\mathbf{D}(X),\Cc)\cong\Cc(\one,X)$, which is independent of the pivotal structure on $\Cc$. Below we will study a class of examples which illustrate that this is not the case for $\SN(S^2(Z),\Cc)$.

\medskip

The specific situation we consider is that the pivotal fusion category is given by a modular fusion category whose pivotal structure has been deformed. 
Let thus $\Mc$ be a modular fusion category, i.e.\ a fusion category, which is equipped with the structure of a ribbon category, and which has a non-degenerate braiding.${}^{\ref{fn:mod-fus-cat}}$
Since $\Mc$ is a ribbon category, it is in particular spherical. 
Denote by $\delta^{\Mc} : (-) \to (-)^{\vee\vee}$ the pivotal structure of $\Mc$ and by $\dim^{\Mc}(-)$ the corresponding dimension of objects in $\Mc$. Since $\Mc$ is spherical, left and right dimensions coincide, and we drop the index. 

Fix an invertible object $J \in \Mc$. Its dimension satisfies $\dim_{\Mc}(J) \in \{\pm 1\}$. For each $X \in \Mc$ we define the following endomorphism of $X$:
\be
	(\eta^J)_X = \frac{1}{\dim^{\Mc}(J)} 
	~~
\begin{tikzpicture}[baseline=-.2em,very thick,black]
\begin{scope}[decoration={markings,mark=at position 0.7 with {\arrow{>}}}]
\draw [postaction={decorate}](0.5,0)  arc  (0:180:0.5);
\end{scope}
\fill[white] (-0.1,0.4) rectangle ++(0.2,0.2);
\draw (0,1.2) -- (0,-1.2);
\fill[white] (-0.1,-0.6) rectangle ++(0.2,0.2);
\begin{scope}[decoration={markings,mark=at position 0.3 with {\arrow{<}}}]
\draw [postaction={decorate}](0.5,0)  arc  (0:-180:0.5);
\end{scope}
\node at (0.7,0) {\small $J$};
\node[above] at (0,1.2) {\small $X$};
\node[below] at (0,-1.2) {\small $X$};
\end{tikzpicture}
\ee
One verifies that $\eta^J$ is a natural monoidal isomorphism of the identity functor, see \cite[Lem.\,3.31]{Drinfeld:0906}. The prefactor $1/\dim^{\Mc}(J)$ is required for monoidality. For $X$ simple one can write
\be
	(\eta^J)_X = \frac{s^{\Mc}_{J,X}}{\dim^{\Mc}(J)\,\dim^{\Mc}(X)} \id_X \ ,
\ee
where $s^{\Mc}$ is the invariant of the Hopf link in $\Mc$,
$s^{\Mc}_{U,V} = \mathrm{tr}^{\Mc}(c_{V,U} \circ c_{U,V})$.

The pivotal fusion category $\Cc$ is defined to be equal to $\Mc$ as a fusion category. The pivotal structure on $\Cc$ is 
\be
	\delta^\Cc_X := \big[ X \xrightarrow{(\eta^J)_X } X \xrightarrow{\delta^{\Mc}_X} X^{\vee\vee} \big] \ .
\ee
Since $\Mc$ is modular, $\eta_J = \eta_K$ implies that $J \cong K$, and so we get a family of pivotal structures on $\Cc$ parametrised by isomorphism classes of invertible objects in $\Mc$. In fact, these are all pivotal structures possible on $\Cc$, see again \cite[Lem.\,3.31]{Drinfeld:0906}.
The quantum dimensions of $\Cc$ are given by, for $X \in \Cc$,
\be
	\dim_l^{\Cc}(X) = \frac{s^{\Mc}_{J^\vee,X}}{\dim^{\Mc}(J)}
	\quad , \quad
	\dim_r^{\Cc}(X) = \frac{s^{\Mc}_{J,X}}{\dim^{\Mc}(J)} \ .
\ee

Since $\Mc$ is modular, 
as a monoidal category the Drinfeld centre of $\Cc$ is $\Zc(\Cc) \cong \Cc \boxtimes \Cc^\mathrm{rev}$, and so its simple objects are pairs $(U,V)$ with $U,V \in \Cc$ simple. We have:

\begin{proposition}\label{prop:S2-ZC-markedpoint}
Let $Z \in \Zc(\Cc)$ be simple. Then $\SN(S^2(Z),\Cc)$ is one-dimensional if $Z \cong (J\otimes J, J^\vee \otimes J^\vee)$ and zero-dimensional else.
\end{proposition}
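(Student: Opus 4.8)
The plan is to compute $\SN(S^2(Z),\Cc)$ by reducing it to a $\Hom$-space in $\Cc$ and then use the modularity of $\Mc$ to identify exactly when that $\Hom$-space is non-zero. First I would observe that a coloured graph $\Gamma$ on $\mathbf{D}(\forget(Z))$ evaluates, via $\langle-\rangle_{\mathbf Q}$ together with the surrounding $V$-loop in the definition of $\Psi_\Gamma$, to an element of $\Cc(\one,\forget(Z))$: pulling everything into a square $Q$ that contains $\Gamma$ and the $V$-loop, the graph inside becomes a morphism $\one\to \forget(Z)\otimes V^\vee\otimes V$, and the $V$-loop together with the half-braiding $c_{Z,V}$ closes it up. Summing over $V\in\Ic$ with weight $\dim_l(V)/\Dim(\Cc)$ then produces a linear map $\Cc(\one,\forget(Z))\to \SN(S^2(Z),\Cc)$, and by construction $\SN(S^2(Z),\Cc)$ is exactly the image of this map. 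So the whole space is spanned by the $\Psi_\Gamma$ attached to the basic one-vertex graphs, parametrised by $\varphi\in\Cc(\one,\forget(Z))$.

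Next I would compute the value of $\Psi_\varphi$ inside the disc as a morphism. The key formula is the ``half-loop'' identity of Remark~\ref{rem:canonical-projector-centre} and the expression \eqref{eq:half-loops-morph-in-Z}: summing $\sum_{V\in\Ic}\frac{\dim_l(V)}{\Dim(\Cc)}$ over a $V$-loop encircling a strand and passing through the half-braiding $c_{Z,V}$ produces (up to the forgetful functor) a morphism in $\Zc(\Cc)$, in fact the projector onto the isotypic component. Concretely, applying this to the single strand $\forget(Z)$ carrying $\varphi\in\Cc(\one,\forget(Z))$, the element $\Psi_\varphi$ is non-zero precisely when $\varphi$ lies in the image of the composite $\Cc(\one,\forget(Z))\xrightarrow{\sim}\Zc(\Cc)(\hat A(\one),Z)$-type map corresponding to morphisms $\one_{\Zc}\to Z$ that survive the projector; more precisely $\Psi_\varphi$ depends only on the image of $\varphi$ under the endomorphism of $\Cc(\one,\forget(Z))$ induced by $\sum_V\frac{\dim_l(V)}{\Dim(\Cc)}c_{Z,V}(-)$, and the image of that endomorphism is canonically $\Zc(\Cc)(\one\!\!\text{-shifted object},Z)$. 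The upshot should be
\be
	\dim \SN(S^2(Z),\Cc) = \dim \Zc(\Cc)(\hat A_{\text{pivotal-twist}}(\one),Z)
\ee
or, spelled out, the multiplicity of $Z$ in a distinguished simple object of $\Zc(\Cc)$ determined by the non-spherical pivotal structure; this is where the $J\otimes J$ enters.

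Then I would identify that distinguished object. Since $\Mc$ is modular, $\Zc(\Cc)\simeq \Mc\boxtimes\Mc^{\mathrm{rev}}$ and a simple object is $(U,V)$; the forgetful functor sends $(U,V)\mapsto U\otimes V$ with half-braiding built from the braiding of $\Mc$. The half-loop sum $\sum_V\frac{\dim_l(V)}{\Dim(\Cc)}c_{Z,V}(-)$, with $\dim_l^{\Cc}(V)=s^{\Mc}_{J^\vee,V}/\dim^{\Mc}(J)$, is where the non-spherical pivotal structure $\delta^\Cc=\delta^\Mc\circ\eta^J$ distorts things: instead of the usual projector onto $\one\in\Zc(\Cc)$ one gets the projector onto the invertible object corresponding to $J$ under both tensor factors, shifted once more by $J$ because the pivotal structure appears again in $\dim_l$ versus $\dim_r$. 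Combining, $\SN(S^2(Z),\Cc)$ is one-dimensional iff $Z\cong(J\otimes J,\,J^\vee\otimes J^\vee)$ and zero otherwise. I would verify the precise object by a direct graphical calculation in $\Mc$: evaluate the $V$-sum on the strand $U\otimes V_0$ (where $Z=(U,V_0)$) using $s$-matrix orthogonality $\sum_V s^{\Mc}_{X,V}\overline{s^{\Mc}_{Y,V}}=\Dim(\Mc)\delta_{X,Y}$, which pins down $U$ and $V_0$ to $J\otimes J$ and its dual respectively, the extra $J$'s coming from the two insertions of $\eta^J$ in $\delta^\Cc$ (one in the evaluation closing the loop, one in $\dim_l$).

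The main obstacle I expect is the careful bookkeeping of which power of $J$ appears: the pivotal structure $\delta^\Cc$ enters both in the right duality maps used to close the $V$-loop and in the weight $\dim_l(V)$, and getting the combination to come out as $J\otimes J$ (and not $J$, or $J\otimes J\otimes J$) requires tracking every cap, cup and half-braiding through the graphical manipulation. A clean way to organise this is to first treat the general statement $\SN(S^2(Z),\Cc)\cong\Cc(\hat A_{\delta^\Cc}(\one),Z)$-type isomorphism abstractly (using the annulus analysis of Remark~\ref{rem:canonical-projector-centre} and Proposition~\ref{prop:ZC-Hom-SN}), and only afterwards specialise to $\Cc=\Mc$ with the $\eta^J$-twisted pivotal structure, where the $s$-matrix computation makes the answer $(J\otimes J, J^\vee\otimes J^\vee)$ transparent. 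The sanity check is $Z=\one$: then $J\otimes J\cong\one$ forces $J$ of order $\le 2$, i.e.\ $\Cc$ spherical, recovering Proposition~\ref{prop:intro-S2dim}.
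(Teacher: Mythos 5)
Your proposal follows essentially the same route as the paper: reduce $\SN(S^2(Z),\Cc)$ to the image of the half-braided $V$-loop endomorphism of $\Cc(\one,\forget(Z))$ weighted by $\dim_l^{\Cc}(V)/\Dim(\Cc)$, then evaluate this in $\Mc$, where the two insertions of $\eta^J$ (one from $\dim_l^{\Cc}$, one from converting the $\Cc$-duality closing the loop into $\Mc$-duality) combine with $s$-matrix orthogonality to give $\delta_{JJ,U}$. The only minor imprecision is that the orthogonality argument pins down only $U\cong J\otimes J$; the condition $V\cong U^\vee$ comes separately from $\Cc(\one,U\otimes V)\neq 0$, as the half-braiding of the $\Cc^{\mathrm{rev}}$-factor drops out of the closed loop.
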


\begin{proof}
Every $v \in \SN(S^2(Z),\Cc)$ can be written as the following coloured graph inside a disc with a boundary point labelled $(\forget(Z),+)$,
\be
	v ~=~  \begin{tikzpicture}[baseline=0em]
	\draw [very thick,black] (0,0) circle (1.0);
	\draw[very thick,blue!80!black,fill=blue!80!black] (0,0) circle (0.1);
	\draw[very thick,blue!80!black,fill=blue!80!black] (0,1) circle (0.1);
	\begin{scope}[very thick,blue!80!black,decoration={markings,mark=at position 0.5 with {\arrow{>}}}]
	\draw [postaction={decorate}] (0,0)--(0,1);
	\end{scope}
	\node at ([shift={(0,0.35)}]0,1) {\small $(\forget(Z),+)$};
	\node at ([shift={(0.35,0)}]0,0) {\small $\tilde\varphi$};
	\end{tikzpicture}
	\quad .
\ee
Here, $\tilde\varphi \in \Cc(\one,\forget(Z))$ 
is given in terms of an arbitrary morphism $\varphi \in \Cc(\one,\forget(Z))$ as 
\be
\tilde\varphi 
~=~ 
\sum_{R \in \Ic} \frac{\dim^\Cc_l(R)}{\Dim(\Cc)} 
~~
\begin{tikzpicture}[baseline=1em]
\coordinate (l) at (-1,0);
\coordinate (r) at (1,0);
\coordinate (t) at (0,2);
\coordinate (catbox) at (1.5,2);

\draw[very thick,black] ([shift={(-0.5,-0.3)}]0,0) rectangle ++(1.0,0.6) node[pos=.5] {\footnotesize $\varphi$};

\draw[very thick,black] (0,0.3) -- (t);
\begin{scope}[very thick,black,decoration={markings,mark=at position 0.4 with {\arrow{<}}}]
\draw [postaction={decorate}] (l) -- ([shift={(0,0.8)}]l) .. controls ++(0,1) and ++(0,0.6) .. (r);
\end{scope}
\draw[very thick,black] (0,1.02) circle (0.15);
\begin{scope}[very thick,decoration={markings,mark=at position 0.5 with {\arrow{>}}}]
\draw [postaction={decorate}](l)  arc  (-180:0:1);
\end{scope}
\node[above] at (t) {\small $\forget(Z)$};
\node at ([shift={(0.25,0)}]r) {\small $R$};
\draw[very thick,black] (catbox) -- ++(0,0.5);
\draw[very thick,black] (catbox) -- ++(0.5,0);
\node at ([shift={(0.25,0.3)}]catbox) {$\Cc$};
\end{tikzpicture}
\quad .
\ee
These diagrams are to be understood in $\Cc$, and we stress this by the ``$\Cc$'' in the top right corner. This is in contrast to the next diagram, which is taken in the modular category $\Mc$, and where we further evaluate $\tilde\varphi$ by taking $Z \cong (U,V)$, so that $\forget(Z) \cong U \otimes V \in \Mc$:
\begin{align}
	\tilde\varphi &= 
	\frac{1}{\Dim(\Mc)} \sum_{R \in \Ic}
\frac{s^{\Mc}_{J^\vee,R}}{\dim^{\Mc}(J)}
~~
\begin{tikzpicture}[baseline=1em]
\coordinate (l) at (-1,0);
\coordinate (r) at (1,0);
\coordinate (t) at (0,2);
\coordinate (catbox) at (1.5,2);
\draw[very thick,black] ([shift={(-0.5,-0.3)}]0,0) rectangle ++(1.0,0.6) node[pos=.5] {\footnotesize $\varphi$};
\draw[very thick,black] (0.3,0.3) -- ([shift={(0.3,0)}]t);
\fill[white] (0.2,0.68) rectangle ++(0.2,0.3);
\begin{scope}[very thick,black,decoration={markings,mark=at position 0.4 with {\arrow{<}}}]
\draw [postaction={decorate}] ([shift={(0,-0.1)}]l) -- ([shift={(0,0.8)}]l) .. controls ++(0,1) and ++(0,0.6) .. (r);
\end{scope}
\draw[very thick,black] ([shift={(-0.3,-0.6)}]r) rectangle ++(0.7,0.6) node[pos=.5] {\footnotesize $\delta_R^{-1}$};
\begin{scope}[very thick,decoration={markings,mark=at position 0.5 with {\arrow{<}}}]
\draw [postaction={decorate}]([shift={(0,-0.6)}]l)  arc  (-180:0:1);
\end{scope}
\fill[white] (-0.4,1.0) rectangle ++(0.2,0.3);
\draw[very thick,black] (-0.3,0.3) -- ([shift={(-0.3,0)}]t);
\node[above] at ([shift={(0.3,0)}]t) {\small $V$};
\node[above] at ([shift={(-0.3,0)}]t) {\small $U$};
\node at ([shift={(0.15,0.4)}]r) {\small $R$};
\node at ([shift={(0.35,-1)}]r) {\small $R^{\vee\vee}$};
\draw[very thick,black] (catbox) -- ++(0,0.5);
\draw[very thick,black] (catbox) -- ++(0.6,0);
\node at ([shift={(0.35,0.3)}]catbox) {$\Mc$};
\node at ([shift={(0.05,-0.35)}]l) {\small $R^\vee$};
\end{tikzpicture}
\nonumber\\
&= 
	\frac{1}{\Dim(\Mc)} \sum_{R \in \Ic}
	\frac{s^{\Mc}_{J^\vee,R}}{\dim^{\Mc}(J)} \,
	\frac{s^{\Mc}_{J^\vee,R}}{\dim^{\Mc}(J)\dim^{\Mc}(R)} \,
	\frac{s^{\Mc}_{R,U}}{\dim^{\Mc}(U)}
	\cdot \varphi
\nonumber\\
&=
	\frac{1}{\Dim(\Mc)} \sum_{R \in \Ic}
\frac{s^{\Mc}_{(JJ)^\vee,R} \, s^{\Mc}_{R,U}}{\dim^{\Mc}(U)} \cdot \varphi
= \delta_{JJ,U} \cdot \varphi
	\ .
\end{align}
Note that $\varphi$ can be non-zero only if $V \cong U^\vee$.
\end{proof}

\begin{remark}\label{rem:bg-charge}
The term ``background charge'' in the title of this section refers to the idea that there is some ambient ``charge'' on the sphere which needs to be ``compensated'' by the objects of $\Zc(\Cc)$ labelling the marked points in order to get a non-zero state space. Hence a sphere with one marked point needs to be labelled by an object other than the tensor unit in $\Zc(\Cc)$.

The original setting in which this was treated is a two-dimensional conformal field theory called Feigin-Fuks free boson, or free boson with background charge, see \cite[9.1.3]{DiFr}. In the language of vertex operator algebras, this corresponds to the fact that the Heisenberg VOA allows for a one-complex-parameter family of Virasoro elements, see e.g.\ \cite[Sec.\,2.5.9]{FB}. The parameter determines for which value of the $U(1)$-charge $q$ the space of one-point conformal blocks on the sphere with insertion of the Heisenberg module of charge $q$ is non-zero.

To obtain the situation treated in this section one can consider a lattice-extension of the Heisenberg VOA. Its representation theory (with standard Virasoro element) is a modular fusion category, which we can choose for $\Mc$. There is now a discrete choice of possible deformations of the Virasoro element \cite[Thm.\,3.1]{Dong:2006}. Taking the Drinfeld centre $\Zc(\Cc) \cong \Cc \boxtimes \Cc^{\mathrm{rev}}$ in the above construction amounts to considering representations of a holomorphic and an anti-holomorphic copy of the VOA.

As mentioned in the introduction, another interesting situation to consider is when $\Mc$ is the modular fusion category of representations of (the even part of) an $N=2$ superconformal rational VOA and $J$ describes the topological twist of the Virasoro element.
\end{remark}

\newpage

\appendix

\section{Appendix: Proof of Lemma~\ref{lem:Bp-action-punct-torus}}
\label{app:Bp-action-punct-torus}

As a preparation we will need the explicit form of the half braiding on $\hat A(M)$, $M \in \Cc$ (see \cite[Thm.\,2.3]{Balsam:2010}\footnote{
	The different dimension prefactors in \cite[Thm.\,2.3]{Balsam:2010} arise from the different normalisation of dual basis pairs, cf.\ \cite[Lem.\,1.1]{Balsam:2010}.}):
\vspace{-1em}
\be\label{eq:Ahat-halfbraid}
c_{\hat A(M),W} ~=~
\sum_{i,j\in\Ic} \sum_\alpha
~~
\begin{tikzpicture}[baseline=0em,very thick,black]
\coordinate (t1) at (-1.4,1.3);
\coordinate (t2) at (0,1.3);
\coordinate (t3) at ([shift={(0.8,0)}]t2);
\coordinate (t4) at ([shift={(0.8,0)}]t3);
\coordinate (b2) at (0,-1.3);
\coordinate (b3) at ([shift={(0.8,0)}]b2);
\coordinate (b4) at ([shift={(0.8,0)}]b3);
\coordinate (b5) at ([shift={(1.4,0)}]b4);
\coordinate (ab) at (-0.8,0);
\coordinate (a) at ([shift={(0,1)}]b4);

\draw (b3)--(t3);
\draw (b4)--(t4);
\draw (0,0)--(t2);

\begin{scope}[decoration={markings,mark=at position 0.5 with {\arrow{<}}}]
\draw [postaction={decorate}](ab)  arc  (-180:0:0.4);
\end{scope}

\begin{scope}[decoration={markings,mark=at position 0.5 with {\arrow{>}}}]
\draw [postaction={decorate}](ab)  arc  (30:180:0.4);
\end{scope}

\draw (ab) .. controls ++(0.5,0.5) and ++(0,-0.5) .. (t1);
\draw ([shift={(-0.747,-0.19)}]ab) .. controls ++(0,-0.5) and ++(0,0.5) .. (b2);
\draw (a) .. controls ++(0.5,-0.5) and ++(0,0.5) .. (b5);

\draw[fill=white] ([shift={(-0.07,-0.07)}]a) rectangle ++(0.14,0.14);
\draw[fill=white] ([shift={(-0.07,-0.07)}]ab) rectangle ++(0.14,0.14);

\node[above] at (t1)  {\small $W$};
\node[below] at (b5)  {\small $W$};
\node[above] at (t2)  {\small $j^\vee$};
\node[below] at (b2)  {\small $i^\vee$};
\node[above] at (t3)  {\small $M$};
\node[below] at (b3)  {\small $M$};
\node[above] at (t4)  {\small $j$};
\node[below] at (b4)  {\small $i$};
\node at ([shift={(-0.25,0.2)}] a) {\small $\alpha$};
\node at ([shift={(-0.25,-0.2)}] ab) {\small $\bar\alpha$};
\end{tikzpicture}
\ee
Here $W \in \Cc$, $i,j \in \Ic$, the $\alpha$ are basis elements of $\Cc(i \otimes W,j)$, and the $\bar\alpha$ are the dual basis of $\Cc(j,i \otimes W)$ in the sense that $\alpha \circ \bar\beta = \delta_{\alpha,\beta}\,\id_j$. The following two expressions are morphisms $\hat A(Z) \to Z$ and $Z \to \hat A(Z)$ in $\Zc(\Cc)$, respectively.
\be\label{eq:half-loops-morph-in-Z}
\sum_{U \in \Ic} 
\begin{tikzpicture}[baseline=2em]
\coordinate (vv) at (-0.7,0);
\coordinate (y) at (0,0);
\coordinate (v) at (0.7,0);
\coordinate (yt) at ([shift={(0,2.0)}]y);
\draw[very thick,black] (y) -- (yt);
\begin{scope}[very thick,black,decoration={markings,mark=at position 0.6 with {\arrow{>}}}]
\draw [postaction={decorate}] (v) -- ([shift={(0,0.2)}]v) .. controls ++(0,0.5) and ++(0,0.6) .. ([shift={(0,1)}]vv) -- (vv);
\end{scope}
\draw[very thick,black] (0,1) circle (0.1);
\node[below] at (vv) {\small $U^\vee$};
\node[below] at (y) {\small $Z$};
\node[above] at (yt) {\small $Z$};
\node[below] at (v) {\small $U$};
\end{tikzpicture}
\qquad , \qquad
\sum_{U \in \Ic} \frac{\dim_r(U)}{\Dim(\Cc)} 
\begin{tikzpicture}[baseline=4em]
\coordinate (vv) at (-0.7,0);
\coordinate (y) at (0,0);
\coordinate (v) at (0.7,0);
\coordinate (vvt) at ([shift={(0,2.8)}]vv);
\coordinate (yt) at ([shift={(0,2.8)}]y);
\coordinate (vt) at ([shift={(0,2.8)}]v);
\draw[very thick,black] (0,0.8) -- (yt);
\begin{scope}[very thick,black,decoration={markings,mark=at position 0.6 with {\arrow{>}}}]
\draw [postaction={decorate}] (vvt) -- ([shift={(0,-0.2)}]vvt) .. controls ++(0,-0.5) and ++(0,-0.6) .. ([shift={(0,-1)}]vt) -- (vt);
\end{scope}
\draw[very thick,black] ([shift={(0,2.8)}] 0,-1) circle (0.1);
\node[above] at (vvt) {\small $U^\vee$};
\node[above] at (yt) {\small $Z$};
\node[above] at (vt) {\small $U$};
\node[below] at (0,0.8) {\small $Z$};
\end{tikzpicture}
\quad .
\ee
The dimension factor in the second expression is required to get compatibility with the half-braiding, for which one needs to use 
\be\label{eq:basis-sum-with-duals-1}
\sum_{U \in \Ic} \frac{\dim_r(U)}{\dim_r(V)} \, \sum_\alpha
~
\begin{tikzpicture}[baseline=1.5em,very thick,black]
\coordinate (a) at (0,1.5);
\coordinate (ab) at (0,0);
\coordinate (t1) at (0,2.3);
\coordinate (b1) at (0,-0.8);

\draw (a)--(t1);
\draw (ab)--(b1);
\draw ([shift={(1,0)}]a)--([shift={(1,0)}]t1);
\draw ([shift={(1,0)}]ab)--([shift={(1,0)}]b1);

\begin{scope}[decoration={markings,mark=at position 0.5 with {\arrow{>}}}]
\draw [postaction={decorate}](ab)  arc  (170:0:0.5);
\end{scope}

\begin{scope}[decoration={markings,mark=at position 0.5 with {\arrow{<}}}]
\draw [postaction={decorate}](a)  arc  (-170:0:0.5);
\end{scope}

\draw (ab) .. controls ++(-0.5,0.5) and ++(-0.5,-0.5) .. (a);

\draw[fill=white] ([shift={(-0.07,-0.07)}]a) rectangle ++(0.14,0.14);
\draw[fill=white] ([shift={(-0.07,-0.07)}]ab) rectangle ++(0.14,0.14);

\node[below] at (b1)  {\small $V$};
\node[above] at (t1)  {\small $V$};
\node[below] at ([shift={(1,0)}]b1)  {\small $W^\vee$};
\node[above] at ([shift={(1,0)}]t1)  {\small $W^\vee$};
\node at ([shift={(-0.25,0.2)}] a) {\small $\alpha$};
\node at ([shift={(-0.25,-0.2)}] ab) {\small $\bar\alpha$};
\node at (-0.1,0.7) {\small $U$};
\end{tikzpicture}
=
~
\id_{V \otimes W^\vee} \ ,
\ee
see \cite[Lem.\,4.9]{Turaev-Virelizier-book} for a proof.

\subsubsection*{The $h_Z$ are linearly independent:}

Pick a simple object $V \in \Ic$ of $\Cc$ and split the identity on $\hat A(V)$ as
\be\label{eq:id-AV-decomp}
\id_{\hat A(V)} = \sum_{Z \in \Jc} \sum_{\gamma_Z}
\big[ \hat A(V) \xrightarrow{\bar\gamma_Z} Z \xrightarrow{\gamma_Z} \hat A(V) \big] \ ,
\ee
where $\gamma_Z$ and $\bar\gamma_Z$ denote a dual basis pair in $\Zc(\Cc)(Z,\hat A(V))$ and $\Zc(\Cc)(\hat A(V),Z)$, respectively. Write $\pi_V$ for the projection
\be
	H = \bigoplus_{U \in \Ic} U^\vee \! \otimes A(U) \xrightarrow{~ \pi_V ~} V^\vee \otimes A(V) \ .
\ee
For $Y$ a simple object in $\Zc(\Cc)$ we have 
\vspace{-1em}
\be
	(\id \otimes \bar\gamma_Z) \circ \pi_V \circ h_Y
	~=~
	\sum_{U \in \Ic} \frac{\dim_r(U)}{\Dim(\Cc)} \, \sum_\rho	
~
\begin{tikzpicture}[very thick,baseline=5em]
\coordinate (yv) at (-1.4,0);
\coordinate (vv) at (-0.7,0);
\coordinate (y) at (0,0);
\coordinate (v) at (0.7,0);
\coordinate (vvt) at ([shift={(0,3)}]vv);
\coordinate (yt) at ([shift={(0,3)}]y);
\coordinate (vt) at ([shift={(0,3)}]v);
\coordinate (yvt) at ([shift={(0,3)}]yv);
\coordinate (a) at  ([shift={(0,-0.8)}]yt);
\coordinate (ab) at  ([shift={(0,-0.8)}]yvt);

\draw[black] (y) -- (yt);
\draw[black] (yv) -- ([shift={(0,1.4)}]yvt);
\draw[black] ([shift={(0,0.6)}]yt) -- ++(0,0.8);
\begin{scope}[black,decoration={markings,mark=at position 0.5 with {\arrow{<}}}]
\draw [postaction={decorate}](y)  arc  (0:-180:0.7);
\end{scope}
\begin{scope}[black,decoration={markings,mark=at position 0.54 with {\arrow{>}}}]
\draw [postaction={decorate}] (vvt) -- ([shift={(0,-1)}]vvt) .. controls ++(0,-0.5) and ++(0,-0.6) .. ([shift={(0,-1.8)}]vt) -- (vt);
\end{scope}
\draw[black] (0,1.2) circle (0.1);
\draw[black] ([shift={(-0.3,0)}]vvt) rectangle ++(2.0,0.6) node[pos=.5] {\small $\bar\gamma_Z$};

\draw[dotted] (-1.15,0.5) rectangle ++(2.3,3.5);

\draw[fill=white] ([shift={(-0.07,-0.07)}]a) rectangle ++(0.14,0.14);
\draw[fill=white] ([shift={(-0.07,-0.07)}]ab) rectangle ++(0.14,0.14);
\node at ([shift={(-0.25,-0.1)}] a) {\small $\rho$};
\node at ([shift={(-0.35,-0.1)}] ab) {\small $\bar\rho^\vee$};

\node at ([shift={(0.25,-0.4)}]yt) {\small $V$};
\node at ([shift={(0.25,-1.4)}]yt) {\small $Y$};
\node at ([shift={(0.25,-3.0)}]yt) {\small $Y$};
\node at ([shift={(0.2,-0.4)}]vt) {\small $U$};
\node at ([shift={(0,1.65)}]yt) {\small $Z$}; 
\node at ([shift={(0.1,1.65)}]yvt) {\small $V^\vee$};
\end{tikzpicture}
\qquad ,
\ee
where $\{\rho\}$ is a basis of $\Cc(\forget(Y),V)$ and $\{ \bar\rho \}$ the dual basis of $\Cc(V,\forget(Y))$.
Using \eqref{eq:half-loops-morph-in-Z} we see that the morphism in the dotted box is a morphism in $\Zc(\Cc)$, and since $Y,Z$ are simple, we find
\be\label{eq:gamma-proj-prop}
	(\id \otimes \bar\gamma_Z) \circ \pi_V \circ h_Y = 0
	\quad \text{if} \quad Y \not\cong Z \ .
\ee
Let now $L := \sum_{Z \in \Jc} c_Z \,\gamma_Z$, $c_Z \in \Bbbk$, be some linear combination such that $L=0$. Then for all $V \in \Cc$,
\be
	0 = (\id \otimes \bar\gamma_Z) \circ \pi_V \circ L 
	\overset{\eqref{eq:gamma-proj-prop}}= c_Z \cdot 
	(\id \otimes \bar\gamma_Z) \circ \pi_V \circ h_Z
	\ .
\ee
On the other hand, using \eqref{eq:id-AV-decomp} and \eqref{eq:gamma-proj-prop} we get 
\be
0
~=~
c_Z \sum_{\gamma_Z} (\id \otimes \gamma_Z) \circ (\id \otimes \bar\gamma_Z) \circ \pi_V \circ h_Z
~=~
c_Z\,\pi_V \circ h_Z  \ .
\ee 
Thus in order to check that $c_Z=0$ it remains to verify that $\pi_V \circ h_Z \neq 0$ for some $V$.

To this end we choose $V = \one$ and use the explicit form \eqref{eq:H-via-simples} of $\jmath$ in the definition $h_Z$, as well as the fact that the half-braiding with $\one$ is trivial (we write $Z$ instead of $\forget(Z)$ in the following equalities),
\be
\pi_\one \circ h_Z 
=
\sum_{V,\rho}
\big[ \one \xrightarrow{\coevR_{\!\!Z}} Z^\vee Z \xrightarrow{\bar\rho^\vee\otimes \rho} V^\vee V = \one^\vee V^\vee \one V \big] 
=
\sum_{V \in \Ic}
\dim\!\big( \Cc(V,Z) \big)\, \coevR_V \ .
\ee
This is non-zero as $\Cc(V,Z) \neq 0$ holds for at least one $V$.

\subsubsection*{The $h_Z$ span the image of $\tilde B_p$:}

From \eqref{eq:SN(T-p)-via-CUAU} we see that $SN(T-\{p\})$ is spanned by elements $\bigoplus_U f_U \in \bigoplus_U \Cc(U , A(U))$ via
\vspace{-.5em}
\be
	{\textstyle \bigoplus_U f_U}
~	
	\longmapsto
~	
\sum_{U,V \in\Ic}
\begin{tikzpicture}[very thick,baseline=0em]
\tikzmath{
	\rx=3.6;\ry=3.9;
	\xid = 0.3; \yid = 0.2;
	\hcross = 0;
}
\begin{scope}[orange!80!black]
\draw (-\rx/2,-\ry/2) rectangle ++(\rx,\ry);
\draw (\xid * \rx -1/2*\rx,-\ry/2-0.2) -- ++(0.2,0.4);
\draw (\xid * \rx -1/2*\rx+0.1,-\ry/2-0.2) -- ++(0.2,0.4);
\draw (\xid * \rx -1/2*\rx,\ry/2-0.2) -- ++(0.2,0.4);
\draw (\xid * \rx -1/2*\rx+0.1,\ry/2-0.2) -- ++(0.2,0.4);
\draw (\rx/2-0.2,\yid * \ry - \ry/2) -- ++(0.4,0.2);
\draw (-\rx/2-0.2,\yid * \ry - \ry/2) -- ++(0.4,0.2);
\end{scope}

\begin{scope}[blue!80!black,decoration={markings,mark=at position 0.54 with {\arrow{>}}}]
\draw [postaction={decorate}] (0,\hcross) -- (0,\ry/2);
\draw [postaction={decorate}] (0,-\ry/2) -- (0,\hcross-0.6);
\draw [postaction={decorate}] (-\rx/2,\hcross+0.4) -- 
(-\rx/2+0.8,\hcross+0.4) .. controls ++(0.3,0) and ++(0,0.3) .. 
(-0.3,\hcross);
\draw [postaction={decorate}] 
(0.3,\hcross) 
.. controls ++(0,0.3) and ++(-0.3,0) .. 
(\rx/2-0.8,\hcross+0.4) 
--
(\rx/2,\hcross+0.4);
\end{scope}

\draw[blue!80!black,fill=white] (-0.5,\hcross-0.6) rectangle ++(1.0,0.6) node[pos=.5,black] {\small $f_U$};

\draw[very thick,black,fill=black] (-\rx/2,-\ry/2) circle (0.08);
\draw[very thick,black,fill=black] (\rx/2,-\ry/2) circle (0.08);
\draw[very thick,black,fill=black] (-\rx/2,\ry/2) circle (0.08);
\draw[very thick,black,fill=black] (\rx/2,\ry/2) circle (0.08);

\node[left] at (-\rx/2,-\ry/2) {\small $p$};
\node[left] at (-\rx/2,\ry/2) {\small $p$};
\node[right] at (\rx/2,-\ry/2) {\small $p$};
\node[right] at (\rx/2,\ry/2) {\small $p$};

\node at (0.25,-\ry/2+0.4) {\small $U$};
\node at (0.25,\ry/2-0.4) {\small $U$};
\node at (-\rx/2+0.3,\hcross+0.1) {\small $V$};
\node at (\rx/2-0.3,\hcross+0.1) {\small $V$};
\end{tikzpicture}
\qquad .
\ee
Next use the isomorphism $\Cc(U , A(U)) \cong \Zc(\Cc)(\hat A(U) , \hat A(U))$ and the decomposition of elements in that space as in \eqref{eq:ZC-Hom-decomp} to see that $SN(T-\{p\})$ can also be spanned by the images 
\vspace{-1em}
\be
\alpha_Z \otimes \beta_Z 
~\longmapsto~
\sum_{V \in\Ic}
\begin{tikzpicture}[very thick,baseline=0em]
\tikzmath{
	\rx=3.6;\ry=4.2;
	\xid = 0.3; \yid = 0.3;
	\hcross = 1;
	\hmor = -0.5;
}
\begin{scope}[orange!80!black]
\draw (-\rx/2,-\ry/2) rectangle ++(\rx,\ry);
\draw (\xid * \rx -1/2*\rx,-\ry/2-0.2) -- ++(0.2,0.4);
\draw (\xid * \rx -1/2*\rx+0.1,-\ry/2-0.2) -- ++(0.2,0.4);
\draw (\xid * \rx -1/2*\rx,\ry/2-0.2) -- ++(0.2,0.4);
\draw (\xid * \rx -1/2*\rx+0.1,\ry/2-0.2) -- ++(0.2,0.4);
\draw (\rx/2-0.2,\yid * \ry - \ry/2) -- ++(0.4,0.2);
\draw (-\rx/2-0.2,\yid * \ry - \ry/2) -- ++(0.4,0.2);
\end{scope}

\begin{scope}[blue!80!black,decoration={markings,mark=at position 0.54 with {\arrow{>}}}]
\draw [postaction={decorate}] (0,-\ry/2) -- (0,\hmor-0.6);
\draw [postaction={decorate}] (0,\hmor) -- (0,\hcross-0.6);
\draw [postaction={decorate}] (0,\hcross) -- (0,\ry/2);

\draw [postaction={decorate}] (-\rx/2,\hcross+0.4) -- 
(-\rx/2+0.8,\hcross+0.4) .. controls ++(0.3,0) and ++(0,0.3) .. 
(-0.3,\hcross);
\draw [postaction={decorate}] 
(0.3,\hcross) 
.. controls ++(0,0.3) and ++(-0.3,0) .. 
(\rx/2-0.8,\hcross+0.4) 
--
(\rx/2,\hcross+0.4);
\end{scope}

\draw[blue!80!black,fill=white] (-0.5,\hcross-0.6) rectangle ++(1.0,0.6) node[pos=.5,black] {\small $\beta_Z$};

\draw[blue!80!black,fill=white] (-0.5,\hmor-0.6) rectangle ++(1.0,0.6) node[pos=.5,black] {\small $\alpha_Z'$};

\draw[very thick,black,fill=black] (-\rx/2,-\ry/2) circle (0.08);
\draw[very thick,black,fill=black] (\rx/2,-\ry/2) circle (0.08);
\draw[very thick,black,fill=black] (-\rx/2,\ry/2) circle (0.08);
\draw[very thick,black,fill=black] (\rx/2,\ry/2) circle (0.08);

\node at (0.25,-\ry/2+0.3) {\small $U$};
\node at (0.35,{(\hmor+\hcross-0.6)/2}) {\small $Z$};
\node at (0.25,\ry/2-0.4) {\small $U$};
\node at (-\rx/2+0.3,\hcross+0.1) {\small $V$};
\node at (\rx/2-0.3,\hcross+0.1) {\small $V$};
\end{tikzpicture}
~=~
\sum_{V \in\Ic}
\begin{tikzpicture}[very thick,baseline=0em]
\tikzmath{
	\rx=3.6;\ry=4.2;
	\xid = 0.3; \yid = 0.3;
	\hcross = 1;
	\hmor = 0.7;
}
\begin{scope}[orange!80!black]
\draw (-\rx/2,-\ry/2) rectangle ++(\rx,\ry);
\draw (\xid * \rx -1/2*\rx,-\ry/2-0.2) -- ++(0.2,0.4);
\draw (\xid * \rx -1/2*\rx+0.1,-\ry/2-0.2) -- ++(0.2,0.4);
\draw (\xid * \rx -1/2*\rx,\ry/2-0.2) -- ++(0.2,0.4);
\draw (\xid * \rx -1/2*\rx+0.1,\ry/2-0.2) -- ++(0.2,0.4);
\draw (\rx/2-0.2,\yid * \ry - \ry/2) -- ++(0.4,0.2);
\draw (-\rx/2-0.2,\yid * \ry - \ry/2) -- ++(0.4,0.2);
\end{scope}

\begin{scope}[blue!80!black,decoration={markings,mark=at position 0.54 with {\arrow{>}}}]
\draw [postaction={decorate}] (0,-\ry/2) -- (0,\hcross-0.6-1.8);
\draw [postaction={decorate}] (0,\hcross-1.8) -- (0,\hmor-0.6);
\draw [postaction={decorate}] (0,\hmor) -- (0,\ry/2);
\end{scope}

\begin{scope}[blue!80!black,decoration={markings,mark=at position 0.2 with {\arrow{>}}}]
\draw [postaction={decorate}] (-\rx/2,\hcross+0.4) -- 
(-\rx/2+0.8,\hcross+0.4) .. controls ++(0.8,0) and ++(-0.8,1.8) .. 
(-0.35,-1.8+\hcross);
\end{scope}

\begin{scope}[blue!80!black,decoration={markings,mark=at position 0.8 with {\arrow{>}}}]
\draw [postaction={decorate}] 
(0.35,-1.8+\hcross) 
.. controls ++(0.8,1.8) and ++(-0.8,0) .. 
(\rx/2-0.8,\hcross+0.4) 
--
(\rx/2,\hcross+0.4);
\end{scope}

\draw[blue!80!black,fill=white] (-0.5,-1.8+\hcross-0.6) rectangle ++(1.0,0.6) node[pos=.5,black] {\small $\beta_Z$};

\draw[blue!80!black,fill=white] (-0.5,\hmor-0.6) rectangle ++(1.0,0.6) node[pos=.5,black] {\small $\alpha_Z'$};

\draw[very thick,black,fill=black] (-\rx/2,-\ry/2) circle (0.08);
\draw[very thick,black,fill=black] (\rx/2,-\ry/2) circle (0.08);
\draw[very thick,black,fill=black] (-\rx/2,\ry/2) circle (0.08);
\draw[very thick,black,fill=black] (\rx/2,\ry/2) circle (0.08);

\draw[dotted] (-1,-1.65) rectangle ++(2,2.6);

\node at (0.25,-\ry/2+0.2) {\small $Z$};
\node at (0.25,{(\hmor+\hcross-0.6-1.8)/2+0.15}) {\small $U$};
\node at (0.25,\ry/2-0.4) {\small $Z$};
\node at (-\rx/2+0.3,\hcross+0.1) {\small $V$};
\node at (\rx/2-0.3,\hcross+0.1) {\small $V$};
\end{tikzpicture}
\ .
\ee
Here, $\alpha'_Z = \alpha_Z \circ \iota_1(U) : U \to Z$, cf.~\eqref{eq:Hom-ZC-SN-iso}, and
the morphism in the dotted box is given by $\hat A(\alpha_Z') \circ \beta_Z$ and is a morphism in $\Zc(\Cc)$. We can replace it by an arbitrary morphism $F_Z \in \Zc(\Cc)(Z,\hat A(Z))$ and altogether obtain a surjection $\textstyle{\bigoplus_Z F_Z} \mapsto \SN(T-\{p\})$.

We now compute the action of the idempotent $B_p$ on the image of the summand $F_Z$. We decompose $F_Z = \bigoplus_j f_j : Z \to A(Z)$ with $f_j : Z \to j^\vee Z j$. The $i$-loop from $B_p$ is oriented clockwise in the computation below, and consequently $\dim_l(i)$ is used in the prefactor instead of $\dim_r(U)$. This amounts to summing over $i = U^\vee$ in \eqref{eq:idem-Bp}. We show the $i$-loop as a green dashed line to make the deformation of edges easier to follow.  
\allowdisplaybreaks
\begin{align}
&\sum_{i,j} \frac{\dim_l(i)}{\Dim(\Cc)}	
~
\begin{tikzpicture}[very thick,baseline=0em]
\tikzmath{
	\rx=4;\ry=4.5;
	\xid = 0.3; \yid = 0.3;
	\hcross = 0;
}
\begin{scope}[orange!80!black]
\draw (-\rx/2,-\ry/2) rectangle ++(\rx,\ry);
\draw (\xid * \rx -1/2*\rx,-\ry/2-0.2) -- ++(0.2,0.4);
\draw (\xid * \rx -1/2*\rx+0.1,-\ry/2-0.2) -- ++(0.2,0.4);
\draw (\xid * \rx -1/2*\rx,\ry/2-0.2) -- ++(0.2,0.4);
\draw (\xid * \rx -1/2*\rx+0.1,\ry/2-0.2) -- ++(0.2,0.4);
\draw (\rx/2-0.2,\yid * \ry - \ry/2) -- ++(0.4,0.2);
\draw (-\rx/2-0.2,\yid * \ry - \ry/2) -- ++(0.4,0.2);
\end{scope}
\begin{scope}[green!60!black,densely dashed,decoration={markings,mark=at position 0.54 with {\arrow{>}}}]
\draw [postaction={decorate}] (-\rx/2,-\ry/2+0.7) arc (90:0:0.7) node[pos=.6,black,right] {\small $i$};
\draw [postaction={decorate}] (-\rx/2+0.7,\ry/2) arc (0:-90:0.7) node[pos=.4,black,right] {\small $i$};
\draw [postaction={decorate}] (\rx/2-0.7,-\ry/2) arc (180:90:0.7) node[pos=.4,black,left] {\small $i$};
\draw [postaction={decorate}] (\rx/2,\ry/2-0.7) arc (270:180:0.7) node[pos=.6,black,left] {\small $i$};
\end{scope}
\begin{scope}[blue!80!black,decoration={markings,mark=at position 0.54 with {\arrow{>}}}]
\draw [postaction={decorate}] (0,\hcross) -- (0,\ry/2);
\draw [postaction={decorate}] (0,-\ry/2) -- (0,\hcross-0.6);
\draw [postaction={decorate}] (-\rx/2,\hcross+0.4) -- 
(-\rx/2+0.8,\hcross+0.4) .. controls ++(0.3,0) and ++(0,0.3) .. 
(-0.3,\hcross);
\draw [postaction={decorate}] 
(0.3,\hcross) 
.. controls ++(0,0.3) and ++(-0.3,0) .. 
(\rx/2-0.8,\hcross+0.4) 
--
(\rx/2,\hcross+0.4);
\end{scope}
\draw[blue!80!black,fill=white] (-0.5,\hcross-0.6) rectangle ++(1.0,0.6) node[pos=.5,black] {\small $f_j$};
\draw[very thick,black,fill=black] (-\rx/2,-\ry/2) circle (0.08);
\draw[very thick,black,fill=black] (\rx/2,-\ry/2) circle (0.08);
\draw[very thick,black,fill=black] (-\rx/2,\ry/2) circle (0.08);
\draw[very thick,black,fill=black] (\rx/2,\ry/2) circle (0.08);
\node at (0.25,-\ry/2+0.4) {\small $Z$};
\node at (0.25,\ry/2-0.4) {\small $Z$};
\node at (-\rx/2+0.3,\hcross+0.1) {\small $j$};
\node at (\rx/2-0.3,\hcross+0.1) {\small $j$};
\end{tikzpicture}
~\overset{(1)}=~
\sum_{i,j,k,\alpha} \frac{\dim_l(i)}{\Dim(\Cc)}	
~
\begin{tikzpicture}[very thick,baseline=0em]
\tikzmath{
	\rx=4;\ry=4.5;
	\xid = 0.2; \yid = 0.3;
	\hcross = 0;
}
\begin{scope}[orange!80!black]
\draw (-\rx/2,-\ry/2) rectangle ++(\rx,\ry);
\draw (\xid * \rx -1/2*\rx,-\ry/2-0.2) -- ++(0.2,0.4);
\draw (\xid * \rx -1/2*\rx+0.1,-\ry/2-0.2) -- ++(0.2,0.4);
\draw (\xid * \rx -1/2*\rx,\ry/2-0.2) -- ++(0.2,0.4);
\draw (\xid * \rx -1/2*\rx+0.1,\ry/2-0.2) -- ++(0.2,0.4);
\draw (\rx/2-0.2,\yid * \ry - \ry/2) -- ++(0.4,0.2);
\draw (-\rx/2-0.2,\yid * \ry - \ry/2) -- ++(0.4,0.2);
\end{scope}
\begin{scope}[green!60!black,densely dashed,decoration={markings,mark=at position 0.6 with {\arrow{>}}}]
\draw [postaction={decorate}] (-0.4,\ry/2) .. controls ++(0,-1.7) and ++(1.7,0) .. (-\rx/2,\hcross+0.4+0.3) node[pos=.6,black,above] {\small $i$};
\draw [postaction={decorate}] (\rx/2,\hcross+0.4+0.3) .. controls ++(-1.7,0) and ++(0,-1.7) .. (0.4,\ry/2) node[pos=.4,black,above] {\small $i$};
\draw [postaction={decorate}] (-0.42,\hcross) .. controls ++(-0.9,-1.5) and ++(0,1.0) .. (-0.4,-\ry/2) node[pos=.4,black,left] {\small $i$};
\draw [postaction={decorate}] (0.4,-\ry/2) .. controls ++(0,1.0) and ++(0.9,-1.5) .. (0.42,\hcross) node[pos=.4,black,right] {\small $i$};
\end{scope}
\begin{scope}[blue!80!black,decoration={markings,mark=at position 0.54 with {\arrow{>}}}]
\draw [postaction={decorate}] (0,\hcross-0.6) -- (0,\ry/2);
\draw [postaction={decorate}] (0,-\ry/2) -- (0,\hcross-0.6-0.6);
(\rx/2-0.8,\hcross+0.4) 
--
(\rx/2,\hcross+0.4);
\end{scope}
\begin{scope}[blue!80!black,decoration={markings,mark=at position 0.2 with {\arrow{>}}}]
\draw [postaction={decorate}] (-\rx/2,\hcross+0.4) -- 
(-\rx/2+0.8,\hcross+0.4) .. controls ++(0.8,0) and ++(0,1) .. 
(-0.35,-0.8+\hcross);
\end{scope}
\begin{scope}[blue!80!black,decoration={markings,mark=at position 0.8 with {\arrow{>}}}]
\draw [postaction={decorate}] 
(0.35,-0.6+\hcross) 
.. controls ++(0,1) and ++(-0.8,0) .. 
(\rx/2-0.6,\hcross+0.4) 
--
(\rx/2,\hcross+0.4);
\end{scope}
\draw[blue!80!black,fill=white] (-0.5,-0.6+\hcross-0.6) rectangle ++(1.0,0.6) node[pos=.5,black] {\small $f_j$};
\draw[very thick,black,fill=black] (-\rx/2,-\ry/2) circle (0.08);
\draw[very thick,black,fill=black] (\rx/2,-\ry/2) circle (0.08);
\draw[very thick,black,fill=black] (-\rx/2,\ry/2) circle (0.08);
\draw[very thick,black,fill=black] (\rx/2,\ry/2) circle (0.08);
\draw[blue!80!black,fill=white] (0.42-0.07,\hcross-0.07) rectangle ++(0.14,0.14) node[pos=0.5,black,right]{\small $\alpha$};
\draw[blue!80!black,fill=white] (-0.42-0.07,\hcross-0.07) rectangle ++(0.14,0.14) node[pos=0.5,black,left]{\small $\bar\alpha$};
\node at (0.2,\ry/2-0.4) {\small $Z$};
\node at (-\rx/2+0.3,\hcross+0.1) {\small $k$};
\node at (\rx/2-0.3,\hcross+0.1) {\small $k$};
\node at (-0.22,\hcross-0.35) {\small $j$};
\node at (0.22,\hcross-0.35) {\small $j$};
\end{tikzpicture}
\nonumber\\[1em]
&\overset{(2)}=
~
\sum_{i,k,l,\beta} \frac{\dim_l(l)}{\Dim(\Cc)}	
~
\begin{tikzpicture}[very thick,baseline=0em]
\tikzmath{
	\rx=4;\ry=4.5;
	\xid = 0.2; \yid = 0.3;
	\hcross = -0.5;
}
\begin{scope}[orange!80!black]
\draw (-\rx/2,-\ry/2) rectangle ++(\rx,\ry);
\draw (\xid * \rx -1/2*\rx,-\ry/2-0.2) -- ++(0.2,0.4);
\draw (\xid * \rx -1/2*\rx+0.1,-\ry/2-0.2) -- ++(0.2,0.4);
\draw (\xid * \rx -1/2*\rx,\ry/2-0.2) -- ++(0.2,0.4);
\draw (\xid * \rx -1/2*\rx+0.1,\ry/2-0.2) -- ++(0.2,0.4);
\draw (\rx/2-0.2,\yid * \ry - \ry/2) -- ++(0.4,0.2);
\draw (-\rx/2-0.2,\yid * \ry - \ry/2) -- ++(0.4,0.2);
\end{scope}
\begin{scope}[green!60!black,densely dashed,decoration={markings,mark=at position 0.6 with {\arrow{>}}}]
\draw [postaction={decorate}] (0,\ry/2-0.5) .. controls ++(-1,0.6) and ++(1.0,0) .. (-\rx/2,\hcross+0.9+0.3) node[pos=.7,black,above] {\small $i$};
\draw [postaction={decorate}] (\rx/2-1.3,\hcross+0.8) .. controls ++(0,1) and ++(0.5,-0.3) .. (0,\ry/2-0.5) node[pos=.3,black,right] {\small $i$};
\draw (\rx/2,\hcross+0.9+0.3) .. controls ++(-0.5,0) and ++(0,0.5) .. (\rx/2-0.6,\hcross+0.8) node[pos=.3,black,above] {\small $i$};
\end{scope}
\begin{scope}[blue!80!black,decoration={markings,mark=at position 0.54 with {\arrow{>}}}]
\draw [postaction={decorate}] (0,\hcross-0.6) -- (0,\ry/2);
\draw [postaction={decorate}] (0,-\ry/2) -- (0,\hcross-0.6-0.6);
(\rx/2-0.8,\hcross+0.4) 
--
(\rx/2,\hcross+0.4);
\end{scope}
\begin{scope}[blue!80!black,decoration={markings,mark=at position 0.2 with {\arrow{>}}}]
\draw [postaction={decorate}] (-\rx/2,\hcross+0.4) -- 
(-\rx/2+0.8,\hcross+0.4) .. controls ++(0.8,0) and ++(0,1) .. 
(-0.35,-0.8+\hcross);
\end{scope}
\begin{scope}[blue!80!black,decoration={markings,mark=at position 0.5 with {\arrow{>}}}]
\draw [postaction={decorate}] 
(0.35,-0.6+\hcross) .. controls ++(0,1) and ++(-0.3,-0.4) .. (\rx/2-1.3,\hcross+0.8);
\draw [postaction={decorate}] 
(\rx/2-0.6,\hcross+0.8) .. controls ++(-0.3,-0.4) and ++(0.3,-0.4) .. (\rx/2-1.3,\hcross+0.8) node[pos=0.5,black,below]{\small $l$};
\draw [postaction={decorate}] 
(\rx/2-0.6,\hcross+0.8) .. controls ++(0.3,-0.6) and ++(-0.3,0) .. (\rx/2,\hcross+0.4);
\end{scope}
\draw[blue!80!black,fill=white] (-0.5,-0.6+\hcross-0.6) rectangle ++(1.0,0.6) node[pos=.5,black] {\small $f_k$};
\draw[black,fill=black] (-\rx/2,-\ry/2) circle (0.08);
\draw[black,fill=black] (\rx/2,-\ry/2) circle (0.08);
\draw[black,fill=black] (-\rx/2,\ry/2) circle (0.08);
\draw[black,fill=black] (\rx/2,\ry/2) circle (0.08);
\draw[blue!80!black] (0,\ry/2-0.5) circle (0.15);
\draw[blue!80!black,fill=white] (\rx/2-0.6-0.07,\hcross+0.8-0.07) rectangle ++(0.14,0.14) node[pos=0.5,black,right]{\small $\bar\beta$};
\draw[blue!80!black,fill=white] (\rx/2-1.3-0.07,\hcross+0.8-0.07) rectangle ++(0.14,0.14) node[pos=0.5,black,left]{\small $\beta$};
\node at (-0.2,\ry/2-2) {\small $Z$};
\node at (-\rx/2+0.3,\hcross+0.1) {\small $k$};
\node at (\rx/2-0.3,\hcross+0.1) {\small $k$};
\node at (0.5,\hcross-0.35) {\small $k$};
\end{tikzpicture}
~
\overset{(3)}=
~
\sum_{i,l} \frac{\dim_l(l)}{\Dim(\Cc)}	
~
\begin{tikzpicture}[very thick,baseline=0em]
\tikzmath{
	\rx=4;\ry=4.5;
	\xid = 0.2; \yid = 0.3;
	\hcross = 0.3;
}
\begin{scope}[orange!80!black]
\draw (-\rx/2,-\ry/2) rectangle ++(\rx,\ry);
\draw (\xid * \rx -1/2*\rx,-\ry/2-0.2) -- ++(0.2,0.4);
\draw (\xid * \rx -1/2*\rx+0.1,-\ry/2-0.2) -- ++(0.2,0.4);
\draw (\xid * \rx -1/2*\rx,\ry/2-0.2) -- ++(0.2,0.4);
\draw (\xid * \rx -1/2*\rx+0.1,\ry/2-0.2) -- ++(0.2,0.4);
\draw (\rx/2-0.2,\yid * \ry - \ry/2) -- ++(0.4,0.2);
\draw (-\rx/2-0.2,\yid * \ry - \ry/2) -- ++(0.4,0.2);
\end{scope}
\begin{scope}[green!60!black,densely dashed,decoration={markings,mark=at position 0.6 with {\arrow{>}}}]
\draw [postaction={decorate}] (0,\ry/2-0.5) .. controls ++(-1,0.6) and ++(0,1.0) .. (-0.35,\hcross) node[pos=.8,black,left] {\small $i$};
\draw [postaction={decorate}] (0.35,\hcross) .. controls ++(0,1) and ++(0.6,-0.4) .. (0,\ry/2-0.5) node[pos=.3,black,right] {\small $i$};
\end{scope}
\begin{scope}[blue!80!black,decoration={markings,mark=at position 0.7 with {\arrow{>}}}]
\draw [postaction={decorate}] (0,-\ry/2) -- (0,\hcross-0.6);
\end{scope}
\begin{scope}[blue!80!black,decoration={markings,mark=at position 0.4 with {\arrow{>}}}]
\draw [postaction={decorate}] (0,\hcross) -- (0,\ry/2);
\end{scope}
\begin{scope}[blue!80!black,decoration={markings,mark=at position 0.2 with {\arrow{<}}}]
\draw [postaction={decorate}] (-\rx/2,-\ry/2+0.7) -- 
(-\rx/2+0.8,-\ry/2+0.7) node[pos=.9,black,above] {\small $l$} .. controls ++(0.8,0) and ++(-0.8,0.4) .. (0,-\ry/2+0.7);
\end{scope}
\begin{scope}[blue!80!black,decoration={markings,mark=at position 0.8 with {\arrow{<}}}]
\draw [postaction={decorate}] (0,-\ry/2+0.7)  .. controls ++(0.8,-0.4) and ++(-0.8,0) .. (\rx/2-0.8,-\ry/2+0.7) -- (\rx/2,-\ry/2+0.7) node[pos=.1,black,above] {\small $l$};
\end{scope}
\draw[blue!80!black,fill=white] (-0.5,\hcross-0.6) rectangle ++(1.0,0.6) node[pos=.5,black] {\small $f_i$};
\draw[dotted] (-0.8,-0.5) rectangle ++(1.6,2.6);
\draw[black,fill=black] (-\rx/2,-\ry/2) circle (0.08);
\draw[black,fill=black] (\rx/2,-\ry/2) circle (0.08);
\draw[black,fill=black] (-\rx/2,\ry/2) circle (0.08);
\draw[black,fill=black] (\rx/2,\ry/2) circle (0.08);
\draw[blue!80!black] (0,\ry/2-0.5) circle (0.15);
\draw[blue!80!black] (0,-\ry/2+0.7) circle (0.15);
\node at (-0.2,-\ry/2+1.4) {\small $Z$};
\end{tikzpicture}
\nonumber
\end{align}
Here in step 1 we deformed the $i$-loop to run near the $Z$ and $j$ lines. Then we expanded $j \otimes i$ into a sum over $k$ and a basis $\{\alpha\}$ of $\Cc(j \otimes i,k)$. 

In step 2 the explicit form of the half braiding on $\hat A(Z)$ is used, as well as the fact that $F_Z$ is a morphism in $\Zc(\Cc)$, resulting in $i$ crossing $Z$ via the half-braiding of $Z$. Furthermore, $i^\vee \otimes k$ was expanded in a sum over $l^\vee$ using the identity
\be
\sum_{l \in \Ic} \frac{\dim_l(l)}{\dim_l(i)} \, \sum_\beta
\begin{tikzpicture}[baseline=0em,very thick,black]
\coordinate (a) at (0,-0.9);
\coordinate (ab) at (0,0.9);
\coordinate (b2) at (-0.5,-1.8);
\coordinate (t2) at (-0.5,1.8);
\coordinate (b1) at (-1.3,-1.8);
\coordinate (t1) at (-1.3,1.8);

\draw ([shift={(1,-0.07)}] ab) -- ([shift={(1,0.07)}] a);
\draw (t1) -- ++(0,-0.9);
\draw (b1) -- ++(0,0.9);

\begin{scope}[decoration={markings,mark=at position 0.5 with {\arrow{>}}}]
\draw [postaction={decorate}](ab)  arc  (170:0:0.5);
\draw [postaction={decorate}](a)  arc  (0:180:0.65);
\end{scope}

\begin{scope}[decoration={markings,mark=at position 0.5 with {\arrow{<}}}]
\draw [postaction={decorate}](a)  arc  (-170:0:0.5);
\draw [postaction={decorate}](ab)  arc  (0:-180:0.65);
\end{scope}

\draw (b2) .. controls ++(0,0.5) and ++(-0.5,-0.5) .. (a);
\draw (t2) .. controls ++(0,-0.5) and ++(-0.5,0.5) .. (ab);

\draw[fill=white] ([shift={(-0.07,-0.07)}]a) rectangle ++(0.14,0.14);
\draw[fill=white] ([shift={(-0.07,-0.07)}]ab) rectangle ++(0.14,0.14);
\node at ([shift={(0.25,0.2)}] a) {\small $\beta$};
\node at ([shift={(0.25,-0.2)}] ab) {\small $\bar\beta$};

\node[below] at (b1)  {\small $i^\vee$};
\node[above] at (t1)  {\small $i^\vee$};
\node[below] at (b2)  {\small $k$};
\node[above] at (t2)  {\small $k$};
\node at ([shift={(0.25,0.6)}] ab) {\small $l$};
\node at ([shift={(0.25,-0.6)}] a) {\small $l$};
\end{tikzpicture}
~~=~
\id_{i^\vee \otimes k}
\ ,
\ee
where $\{ \beta \}$ is a basis of $\Cc(k \otimes l,i)$ and $\{ \bar\beta\}$ the corresponding dual basis of  $\Cc(i,k \otimes l)$. This can be shown similarly to \eqref{eq:basis-sum-with-duals-1}.

In step 3 the $\bar\beta$ vertex is dragged through the vertical edge, and one sees once more the explicit form of the half braiding for $\hat A(Z)$ appear. One can thus move the $l$-loop past $F_Z$, resulting in the crossing of $l$ and $Z$ via the half-braiding of $Z$.

The sum over $i$ of the morphism in the dotted box is $F_Z$ composed with the left hand side in \eqref{eq:half-loops-morph-in-Z} and so is a morphism $Z \to Z$ in $\Zc(\Cc)$.
Since $Z$ is simple, the final expression is proportional to $\psi^{-1}(h_Z)$, cf.\ \eqref{eq:psi-1(hX)}, and 
thus the $\psi^{-1}(h_Z)$ span the image of $B_p$ in $\SN(T-\{p\})$. This completes the proof of Lemma~\ref{lem:Bp-action-punct-torus}.

\newpage

\newcommand\arxiv[2]      {\href{http://arXiv.org/abs/#1}{#2}}
\newcommand\doi[2]        {\href{http://dx.doi.org/#1}{#2}}
\newcommand\httpurl[2]    {\href{http://#1}{#2}}

\end{document}